\newcommand{\N}{\mathbb N}
\theoremstyle{plain}
\newtheorem{thm}{Theorem}[section] 
\theoremstyle{definition}
\newtheorem{Def}[thm]{\protect\definitionname}
\theoremstyle{plain}
\theoremstyle{plain}
\newtheorem{Prop}[thm]{Proposition}
\newtheorem{lemma}[thm]{Lemma}
\newtheorem{Cor}[thm]{Corollary}
\theoremstyle{definition}
\theoremstyle{remark}
\newtheorem{Rem}[thm]{Remark}
\newcommand\upperleft[3]{\prescript{#1}{}{\mathrlap{\smash{#2#3}}\phantom{#3}}}
\DeclareMathOperator{\cor}{Cor}
\DeclareMathOperator{\dist}{dist}
\DeclareMathOperator{\id}{id}
\DeclareMathOperator{\Image}{Im}
\DeclareMathOperator{\spec}{spec}
\renewcommand{\tilde}{\widetilde}
\renewcommand{\emptyset}{\varnothing}
\renewcommand{\bar}{\overline}
  \providecommand{\definitionname}{Definition}
\begin{document}
\title{Strong laws of large numbers for intermediately trimmed Birkhoff sums of observables with infinite mean}

\author[Kesseb\"ohmer]{Marc Kesseb\"ohmer}
  \address{Universit\"at Bremen, Fachbereich 3 -- Mathematik und Informatik, Bibliothekstr. 1, 28359 Bremen, Germany}
  \email{\href{mailto:mhk@math.uni-bremen.de}{mhk@math.uni-bremen.de}}
\author[Schindler]{Tanja Schindler}
\address{Australian National University, Research School Finance, Actuarial Studies and Statistics, 26C Kingsley St,
Acton ACT 2601, Australia}
  \email{\href{mailto:tanja.schindler@anu.edu.au}{tanja.schindler@anu.edu.au}}

\keywords{Almost sure convergence theorems, trimmed sum process, transfer operator, spectral method, piecewise expanding interval maps}
 \subjclass[2010]{
    Primary: 60F15
    Secondary: 37A05, 37A30, 60G10}
\date{\today}

\date{\today}

\begin{abstract}
We consider dynamical systems on a finite measure space fulfilling a spectral gap property
and Birkhoff sums of a non-negative, non-integrable observable.
For such systems we generalize strong laws of large numbers for intermediately trimmed sums only known for independent random variables.
The results split up in trimming statements for general distribution functions and for regularly varying tail distributions.
In both cases the trimming rate can be chosen in the same or almost the same way as in the i.i.d.\ case. 
As an example we show that piecewise expanding interval maps fulfill the necessary conditions for our limit laws.
As a side result we obtain strong laws of large numbers for truncated Birkhoff sums.
\end{abstract}
\maketitle

\section{Introduction and statement of main results}
We consider an ergodic dynamical system $\left(\Omega,\mathcal{A},T, \mu\right)$ with $\mu$ a probability measure  
and a stochastic processes given by the Birkhoff sums $\mathsf{S}_n\chi\coloneqq\sum_{k=1}^n\chi\circ  T^{k-1}$ with $\mathsf{S}_0\chi=0$ for some measurable function $\chi:\Omega\to \mathbb{R}_{\geq 0}$. 
Regarding strong laws of large numbers there is a crucial difference between $\int\chi\mathrm{d}\mu$ being finite or not.
In the finite case we obtain by Birkhoff's ergodic theorem that $\mu$-almost surely (a.s.)  
\begin{align*}
 \lim_{n\to\infty}\frac{\mathsf{S}_n\chi}{n}=\int\chi\mathrm{d}\mu,
\end{align*}
i.e.\  the strong law of large numbers is fulfilled, whereas 
in the case of an observable with infinite expectation, 
Aaronson showed in \cite{aaronson_ergodic_1977} that for
all positive sequences $\left(d_{n}\right)_{n\in\mathbb{N}}$ we have
$\mu$-a.s.
\[
\limsup_{n\rightarrow\infty}\frac{\mathsf{S}_{n}\chi}{d_{n}}=+\infty\text{ \,\,\,\,\ or \,\,\,\,\,}\liminf_{n\rightarrow\infty}\frac{\mathsf{S}_{n}\chi}{d_{n}}=0.
\]
However, there might be a strong law of large numbers after deleting a number
of the largest summands from the partial $n$-sums. 
More precisely,
for each $n\in\mathbb{N}$ we chose a permutation $\sigma\in\mathcal{S}_{n}$
of $\left\{ 0,\ldots,n-1\right\} $ such that $\chi\circ  T^{\sigma\left(0\right)}\geq \chi\circ  T^{\sigma\left(1\right)}\geq\ldots\geq \chi\circ  T^{\sigma\left(n-1\right)}$
and for given $\left(b_{n}\right)\in\mathbb{N}_{0}^{\mathbb{N}}$ we define 
\begin{align*}
\mathsf{S}_{n}^{b_{n}}\chi & \coloneqq\sum_{k=b_n}^{n-1}\chi\circ  T^{\sigma\left(k\right)}.
\end{align*}
If $b_{n}=r\in\mathbb{N}$ is fixed for all $n\in\mathbb{N}$ then $\left(\mathsf{S}_{n}^{r}\chi\right)$
is called a \emph{lightly trimmed sum} \emph{process}.
If we allow the sequence $\left(b_{n}\right)\in\mathbb{N}_0^{\mathbb{N}}$
to diverge to infinity such that $b_{n}=o\left(n\right)$, i.e.\ $\lim_{n\rightarrow\infty}b_{n}/n=0$,
then $\left(\mathsf{S}_{n}^{r}\chi\right)$ is called
an \emph{intermediately} (also \emph{moderately}) \emph{trimmed sum process}.
If there exist $r\in\N$ and a sequence of constants $(d_n)$ such that 
$\lim_{n\to\infty}\mathsf{S}_n^r\chi/d_n=1$ a.s.\ we refer to it as a \emph{lightly trimmed strong law} and similarly
if for an intermediately trimmed sum $\mathsf{S}_n^{b_n}\chi$ there exists $(d_n)$ such that $\lim_{n\to\infty}\mathsf{S}_n^{b_n}\chi/d_n=1$ a.s.\
we refer to it as an \emph{intermediately trimmed strong law}.

Trimming results for independent, identically distributed (i.i.d.) random variables are well studied. 
Mori developed
in \cite{mori_strong_1976} and \cite{mori_stability_1977} general conditions
for a lightly trimmed strong law to hold. These results have been
generalized by Kesten and Maller, see \cite{maller_relative_1984},
\cite{kesten_ratios_1992}, and \cite{kesten_effect_1995}.

It becomes clear from a result by
Kesten, see \cite{kesten_convergence_1993}, that light trimming is not always sufficient.
His result implies in particular that a weak law of large numbers for a lightly trimmed sum of i.i.d.\ random variables holds
if and only if it also holds for the untrimmed sum. 
This rules out the possibility for a lightly trimmed strong law if a weak law of large numbers does not hold.

A special case in this context is the case of \emph{regularly varying} tail variables with index between $-1$ and $0$.
To state more precisely the situation we require that the distribution function $F$ (i.e.\ in the dynamical systems setting we have $F\left(x\right)=\mu\left(\chi\leq x\right)$)
fulfills $1-F\left(x\right)\sim x^{-\alpha}L\left(x\right)$
with $0<\alpha<1$ and $L$ a slowly varying function.
Here, $u\left(x\right)\sim w\left(x\right)$ means that $u$ is
asymptotic to $w$ at infinity, that is $\lim_{x\rightarrow\infty}u\left(x\right)/w\left(x\right)=1$
and $L$ being \emph{slowly varying} means that for every $c>0$ we
have $L\left(cx\right)\sim L\left(x\right)$.

It can be easily deducted from \cite[VII.7 Theorem 2 and VIII.9 Theorem 1]{feller_introduction_1971} that in the just mentioned case 
for i.i.d.\ random variables no weak law of large numbers and thus no lightly trimmed strong law holds.
Furthermore, the case of i.i.d.\ random variables with regularly varying tails is treated by Haeusler and Mason in \cite{haeusler_laws_1987} and Haeusler in \cite{haeusler_nonstandard_1993}, in which a law of  an iterated logarithm is established.
As we are here considering sums of non-negative random variables instead of Birkhoff sums, we will
denote in these cases the sum trimmed by the $r$ maximal terms by $S_n^r$, 
i.e.\ we distinguish the trimmed Birkhoff sum $\mathsf{S}_n\chi$ from a trimmed sum of i.i.d.\ random variables $S_n^r$.
With this notation and setting $F^{\leftarrow}:\left[0,1\right]\to\mathbb{R}_{\geq 0}$ denoting the \emph{generalized inverse
function} of $F$, i.e.\ 
$F^{\leftarrow}\left(y\right) \coloneqq\inf\left\{ x\in\mathbb{R}\colon F\left(x\right)\geq y\right\}$,
then a combination of the results by Haeusler and Mason imply that there exists a non-stochastic $\gamma$ depending on $n$ and the trimming function $b_n$ such that
\begin{align*}
\limsup_{n\to\infty} \pm \frac{S_n^{b_n}-n\cdot \int_0^{1-b_n/n}F^{\leftarrow}\left(s\right)\mathrm{d}s}{\gamma\left(n,b_n\right)}
\end{align*}
almost surely equals $1$ if $\lim_{n\to\infty}b_n/\log\log n=\infty$, see \cite{haeusler_laws_1987}, 
and, almost surely equals a constant $M$ if $b_n\sim c\cdot \log\log n$,  see \cite{haeusler_nonstandard_1993}.
By comparing the asymptotic behavior of the norming and centering sequences $\gamma\left(n,b_n\right)$ and $n\cdot\int_0^{1-b_n/n}F^{\leftarrow}\left(s\right)\mathrm{d}s$ referring to \cite[Section 4]{haeusler_nonstandard_1993} one can conclude that $\lim_{n\to\infty}S_n^{b_n}/(n\cdot \int_0^{1-b_n/n}F^{\leftarrow}\left(s\right)\mathrm{d}s)=1$ almost surely
if and only if $\lim_{n\to\infty}b_n/\log\log n=\infty$, i.e.\ an intermediately trimmed strong law follows as a special case.

The results for the lightly trimmed case have been generalized to dependent random variables from different contexts.
One of the first investigated examples of this situation is the unique continued
fraction expansion of an irrational $x\in\left[0,1\right]$ given by
\[x\coloneqq\frac{1}{a_{1}\left(x\right)+\cfrac{1}{a_{2}\left(x\right)+\ddots}}.
\]
In this case we consider the space $\Omega\coloneqq \left[0,1\right]\backslash\mathbb{Q}$,  
the Gauss measure $\mu$ given by $d\mu(x)\coloneqq1/\left(\log2\left(1+x\right)\right) d\lambda\left(x\right)$ with  $\lambda$ denoting the Lebesgue measure restricted to $\left[0,1\right]$, and the Gauss map $T$ defined as $ T x\coloneqq \left\{1/x\right\}\coloneqq 1/x-\left\lfloor 1/x\right\rfloor$.
The observable $\chi\colon\Omega\to\mathbb{N}$ with $\chi (x)\coloneqq\left\lfloor 1/x\right\rfloor$ and $\left\lfloor x\right\rfloor \coloneqq\max\left\{ n\in\mathbb{N}\colon n\leq x\right\}$ 
gives then rise to the  stationary (dependent, but $\bm{\psi}$-mixing) process $\chi\circ  T^{n-1}=a_{n}$, $n\in\mathbb{N}$, of the $n$-th continued fraction digit. 
Even though a strong law of large numbers can not
hold for $\mathsf{S}_{n}\chi$, Diamond and Vaaler showed in \cite{diamond_estimates_1986}
that under light trimming with $r=1$ we have a.s.
\[
\lim_{n\rightarrow\infty}\frac{\mathsf{S}_{n}^{1}\chi}{n\log n}=\frac{1}{\log2}.
\]
These results were generalized in \cite{nakada_metric_2002} and \cite{nakada_metrical_2003} to other continued fraction expansions.

In \cite{haynes_quantitative_2014} Haynes gave a quantitative strong law of large numbers under trimming for a certain class of observables $\chi$.
He considered the sum $\widetilde{\mathsf{S}}_n$ with $\widetilde{\mathsf{S}}_n\chi(x)\coloneqq\mathsf{S}_n\chi(x)-\delta\left(n,x\right)\max_{0\leq i\leq n-1}\chi\circ  T^i(x)$
with $\delta\left(n,x\right)\in\left\{0,1\right\}$, i.e.\ the sum is trimmed by the maximal element, but depending on $x$. Then he gave an error term for $\widetilde{\mathsf{S}}_n\chi-d_n$
with $(d_n)$ the norming sequence as above.

One way to study limit theorems in the dynamical systems setting is to prove limit theorems 
for mixing random variables as dynamical system are often $\bm{\psi}$- or at least $\bm{\phi}$- or $\bm{\phi}_{\textrm{rev}}$-mixing.
For a precise definitions of different kinds of mixing see \cite{bradley_basic_2005}.
One approach in this direction is a result by Aaronson and Nakada extending the results by Mori to $\bm{\psi}$-mixing random variables, see in \cite{aaronson_trimmed_2003}, 
i.e.\ they gave sufficient conditions for a lightly trimmed strong law to hold.

In this paper we will study intermediately trimmed strong laws for dynamical systems fulfilling a spectral gap property with the exact assumptions on the system given as Property $\mathfrak{D}$ in Definition \ref{def: Prop D}.
The random variables of our main example given in Section \ref{Examples}
are at least exponentially $\bm{\phi}_{\textrm{rev}}$-mixing, which was proven in \cite{aaronson_mixing_2005}.
One approach to prove an intermediately trimmed strong law would thus be to prove a limit theorem for $\bm{\phi}_{\textrm{rev}}$-mixing systems.
However, it is difficult to prove these trimming results by only assuming a certain speed of mixing.
These problems are elucidated in Section \ref{subsubsec: comments about method}.

Our method is based on proving an exponential inequality for the dynamical systems
using a spectral method similar to the Nagaev-Guivarc'h spectral method for the central limit theorem,
giving stronger results than exponential inequalities for mixing random variables. 
A sketch of the proof will be given in Section \ref{subsec: sketch proof} 
and some of the difficulties will be discussed in Section \ref{subsubsec: comments about method}.

The spectral gap property for dynamical systems, 
guaranteed  by our later stated Property $\mathfrak{C}$, see Definition \ref{def: prop C},
is a typical assumption under which limit theorems for dynamical systems can be proven. 
The first statements proven in this setting were central limit theorems for Markov chains, see \cite{nagaev_limit_1957},
and generalizations for other dynamical systems, see \cite{rousseau_theoreme_1982} and \cite{guivarch_thoeremes_1988}.
Also other limit theorems have been proven in this setting as for example
local central limit theorems, see \cite{hennion_limit_2001}, \cite{guivarch_thoeremes_1988}, and
\cite{herve_nagaev_2010} for higher dimensions,
 Berry-Esseen theorems, see \cite{gouezel_berry_2005}, \cite{herve_nagaev_2010},
and almost sure invariance principles, see \cite{melbourne_vector_2009} and \cite{gouezel_almost_2010}.

There has also been some recent interest 
in limit theorems for dynamical systems with heavy tail distributions. 
This includes in particular convergence to a stable law. 
Aaronson and Denker proved in \cite{aaronson_local_2001} 
some necessary conditions for a stable limit laws for dynamical systems and observables with heavy tails.
Gou\"ezel proved in \cite{gouezel_characterization_2010} the necessity of  those conditions
using the work of Sarig, see \cite{sarig_continuous_2006}, 
which is restricted to a  particular class of distribution functions.
These results were also generalized for dynamical systems on intermittent maps, see \cite{melbourne_weak_2015}.
Furthermore, Gou\"ezel also proved a stable limit law for observables of the doubling map, see \cite{Gouezel_stable_2008}.

Tyran-Kaminska studied in \cite{tyran_weak_2010} the functional convergence of normalized Birkhoff sums 
with heavy tailed observables to $\alpha$-stable processes.

Aaronson and Zweim\"uller proved in \cite{aaronson_limit_2014} some stable laws 
and additionally a one-sided law of the iterated logarithm for
mixing dynamical systems and observables with heavy tails.
All these results use intrinsically  transfer operator techniques.
For further references concerning transfer operator methods we refer the reader to the review papers \cite{gouezel_limit_2015} and \cite{fan_spectral_2003}.

In contrast to the before mentioned stable laws, Carney and Nicol 
 investigated growth rates of Birkhoff
sums of non-integrable observables on a finite measure space giving some almost sure results, see \cite{carney_dynamical_2017}.
Closely related to the behavior of a non-integrable observables on a finite measure space 
is the behavior of observables on an infinite measure space. 

 The pointwise convergence behaviour of Birkhoff sums with respect to iterates of the transfer operator acting on integrable observables with a finite number of  poles have been studied in the context of both finite and infinite measure-preserving dynamical systems in \cite{KKS}. 
 In \cite{Lenci_pointwise_2018} Lenci and Munday further investigate the question 
under which conditions a Birkhoff ergodic theorem holds for infinite measure-preserving dynamical systems.

In the following section we will give the precise setting and 
even though not every $\bm{\phi}_{\textrm{rev}}$-mixing system can be described by a dynamical system with spectral gap,
our setting for dynamical systems is rather general, as many dynamical systems fulfill the later stated Property $\mathfrak{C}$,
for example subshifts of finite type, see for example \cite{parry_zeta_1990} or \cite{baladi_positive_2000},
piecewise expanding interval maps, see \cite{hofbauer_ergodic_1982}
and a generalization for infinitely many partitions, see \cite{rychlik_bounded_1983},
and Anosov or Axiom A systems, see
\cite{baladi_anisotropic_2007} and \cite{gouezel_compact_2008}.

To prove an intermediately trimmed strong law we need additional assumptions on the observable $\chi$ given in Property $\mathfrak{D}$.
As a concrete example we state later in Section \ref{Examples} that under rather mild assumptions a system of piecewise expanding interval maps 
fulfills the additional assumptions of Property $\mathfrak{D}$.

As we show, our setting is rather general in the sense that there are dynamical systems
for which a lightly trimmed strong law does not hold despite the fact that it would hold for i.i.d.\ random variables
with the same distribution function. Keeping the system but using another observable with a different distribution function allows intermediately trimmed strong laws 
for the same trimming sequence as in the i.i.d.\ case, 
see Remark \ref{rem: doubling map ex}.

In Sections \ref{subsec: gen results} and \ref{subsec: reg var results} we will present intermediately trimmed strong laws for the above mentioned dynamical systems,
for general distribution functions in Section \ref{subsec: gen results} and for regularly varying tail distributions
in Section \ref{subsec: reg var results}. 
The results for general distribution functions are almost as strong as the i.i.d.\ trimming results in \cite{kessebohmer_strong_2016}
and for the regular variation case we show that an intermediately trimmed strong law holds 
for the same trimming sequence $\left(b_n\right)$ which in the i.i.d.\ case can be derived from \cite{haeusler_laws_1987}.

As side results  we obtain limit theorems for sums of truncated random variables, see Section \ref{subsec: intro truncated rv}.
Namely, for $\chi:\Omega\to\mathbb{R}_{\geq 0}$ and a real valued sequence $\left(f_{n}\right)_{n\in\mathbb{N}}$
we consider the truncated sum $\mathsf{T}_n^{f_n}\chi\coloneqq \sum_{k=1}^n\left(\chi \cdot\mathbbm{1}_{\{\chi\leq f_n\}}\right)\circ T^{k-1}$.
Limit results for these truncated sums have been of recent interest for i.i.d.\ random variables, see \cite{nakata_limit_2015} and \cite{gyorfi_rate_2011}.
\medskip

In Section \ref{sec: main ideas proof} we will give the main ideas of the proof, explain the differences to the i.i.d.\ case,
and give the structure of the rest of the paper.

\subsection{Basic setting}
In the following we will define our two main properties. 
The first, Property $\mathfrak{C}$, restricts to dynamical systems with a spectral gap property
and is part of our next property, Property $\mathfrak{D}$. 
Property $\mathfrak{D}$
is a property on the dynamical system and additionally on 
the admissible observables allowing us  to state and prove intermediately trimmed strong laws.

\begin{Def}[Property $\mathfrak{C}$]\label{def: prop C}
Let $\left(\Omega, \mathcal{A},  T, \mu\right)$ be a dynamical system with $ T$ a non-singular transformation and $\widehat{ T}:\mathcal{L}^1\to \mathcal{L}^1$ be the transfer operator of $ T$, 
i.e.\ the uniquely defined operator such that for all $f\in\mathcal{L}^1$ and $g\in\mathcal{L}^{\infty}$ we have
\begin{align}
\int \widehat{ T}f\cdot g\mathrm{d}\mu=\int f\cdot g\circ T\mathrm{d}\mu,\label{hat CYRI}
\end{align}
see e.g. \cite[Section 2.3]{MR3585883} for further details.
Furthermore, let $\mathcal{F}$ be a subset of the measurable functions forming  a Banach algebra with respect to the  norm $\left\|\cdot\right\|$.
We say that $\left(\Omega, \mathcal{A}, T , \mu,\mathcal{F},\left\|\cdot\right\|\right)$ has Property $\mathfrak{C}$ if the following conditions hold:
\begin{itemize}
\item
$\mu$ is a $T $-invariant, mixing probability measure.
\item
$\mathcal{F}$ contains the constant functions and for all $f\in\mathcal{F}$ we have
\begin{align}
\left\|f\right\|\geq \left|f\right|_{\infty}.\label{ineq <l}
\end{align}
\item 
$\widehat{T}$ is a bounded linear operator with respect to $\left\|\cdot\right\|$, i.e.\ there {exists} a constant $K_0>0$ such that for all $f\in\mathcal{F}$ we have
\begin{align}
\left\|\widehat{T}f\right\|\leq K_0\cdot\left\|f\right\|.\label{C 0 f} 
\end{align} 
\item 
$\widehat{T}$ has a spectral gap on $\mathcal{F}$ with respect to $\left\|\cdot\right\|$, see Definition \ref{def spec gap}.
\end{itemize}
\end{Def}
The above mentioned property is a widely used setting for dynamical systems. 
In particular it implies that the transfer operator has $1$ 
as a unique and simple eigenvalue on the unit circle which implies an exponential decay of correlation.
We will give proofs of these properties in Section \ref{subsec: spec gap corr decay}.

However, in order to state our main theorems we need additional assumptions on the observable $\chi$ acting on a system fulfilling Property $\mathfrak{C}$.
\begin{Def}[Property $\mathfrak{D}$]\label{def: Prop D}
For a Banach algebra $\mathcal{F}$ 
and for a fixed measurable function $\chi:\Omega\to\mathbb{R}_{\geq 0}$ we set, for all $\ell\in\mathbb{R}_{\geq 0}$,
\begin{align*}
\upperleft{\ell}{}{\chi}\coloneqq\chi\cdot\mathbbm{1}_{\left\{\chi\leq \ell\right\}}.
\end{align*}

We say that $\left(\Omega, \mathcal{A}, T , \mu,\mathcal{F},\left\|\cdot\right\|,\chi\right)$ has Property $\mathfrak{D}$ if the following conditions hold:
\begin{itemize}
\item $\left(\Omega, \mathcal{A}, T , \mu,\mathcal{F},\left\|\cdot\right\|\right)$ fulfills Property $\mathfrak{C}$.
\item
There exists
$K_1>0$ such that for all $\ell>0$, 
\begin{align}
\left\|\upperleft{\ell}{}{\chi}\right\|\leq K_1\cdot \ell.\label{C 1}
\end{align}
\item
There exists 
$K_2>0$ such that for all $\ell>0$, 
\begin{align}
\left\|\mathbbm{1}_{\left\{\chi>\ell\right\}}\right\|\leq K_2.\label{C 2} 
\end{align}
\end{itemize}
\end{Def}
Even though a lot of dynamical systems fulfill Property $\mathfrak{C}$ as mentioned in the introduction, 
it is not immediately clear for which observables $\chi$ they additionally fulfill Property $\mathfrak{D}$.
However, it turns out that for piecewise expanding interval maps the assumptions on $\chi$ are rather weak, see Section \ref{Examples}.

\subsection{Results for general distribution functions}\label{subsec: gen results}
The following theorem provides us with a method to find a trimming
sequence $\left(b_{n}\right)$ if the distribution function $F$ is given. Before stating this theorem we 
define $\left\lceil x\right\rceil \coloneqq\min\left\{ n\in\mathbb{N}\colon n\geq x\right\}$.
\begin{thm}
\label{find bn} 
Let $\left(\Omega, \mathcal{A}, T , \mu,\mathcal{F},\left\|\cdot\right\|,\chi\right)$ fulfill Property $\mathfrak{D}$
and let $\left(f_{n}\right)_{n\in\mathbb{N}}$ be a sequence
of positive real numbers tending to infinity.
Fix $0<\epsilon<1/4$ 
such that for 
\[
a_{n}\coloneqq n\cdot\left(1-F\left(f_{n}\right)\right),\;\; d_{n}\coloneqq n\int_{0}^{f_{n}}x\,\mathrm{d}F\left(x\right),\,\, n\in\mathbb{N},
\]
we have 
\begin{align}
\lim_{n\to\infty}f_{n}/d_{n}\cdot\max\left\{a_{n}^{1/2+\epsilon}\left(\log\log n\right)^{1/2-\epsilon},\log n\right\}& =0.\label{cond 1 find bn}
\end{align}
Then there exists $W>0$ independent of $\epsilon$ and $(f_n)$ such that for
\begin{align*}
b_{n}\coloneqq\left\lceil a_{n}+W\cdot\max\left\{a_n^{1/2+\epsilon}\cdot\left(\log\log n\right)^{1/2-\epsilon},\log\log n\right\}\right\rceil, 
\end{align*}
$n\in\mathbb{N}$, we have 
\[
\lim_{n\rightarrow\infty}\frac{S_{n}^{b_{n}}}{d_{n}}=1\text{ a.s.}
\]
\end{thm}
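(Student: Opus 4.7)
The plan is to sandwich $S_n^{b_n}$ between two quantities derived from the truncated Birkhoff sum
\[
\mathsf{T}_n^{f_n}\chi \coloneqq \sum_{k=0}^{n-1}\bigl(\upperleft{f_n}{}{\chi}\bigr)\circ T^{k},
\]
and to show that each bound approximates $d_n$. The underlying heuristic is that trimming the $b_n$ largest terms is essentially the same as discarding every term exceeding $f_n$, provided that $b_n$ dominates the count $N_n\coloneqq\#\{0\le k\le n-1\colon \chi\circ T^k>f_n\}$. Since $\mathbb{E}[N_n]=a_n$, the prescribed $b_n$ is precisely $a_n$ plus a concentration-type correction.

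The decisive technical ingredient is a Bernstein-type exponential inequality for Birkhoff sums of any $\psi\in\mathcal{F}^+$, of the rough shape
\[
\mu\!\left(\Bigl|\sum_{k=0}^{n-1}\psi\circ T^{k}-n\!\int\!\psi\,\rmd\mu\Bigr|>t\right)\le C\exp\!\left(-c\,\frac{t^{2}}{n\|\psi\|^{2}+|\psi|_{\infty}t}\right),
\]
with $C,c$ depending only on the spectral data of Property $\mathfrak{C}$ and not further on $\psi$. I would derive it via a Nagaev--Guivarc'h style perturbation: the spectral gap of $\widehat T$ on $(\mathcal{F},\|\cdot\|)$ persists for the twisted family $\widehat T_{z}f\coloneqq\widehat T(\mathrm e^{z\psi}f)$ in a small complex disc around $0$, producing an analytic leading eigenvalue $\lambda(z)$ whose Taylor expansion yields cumulant control, which a Chernoff--Markov argument converts into the displayed tail bound.

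This inequality is then applied twice. Taking $\psi=\mathbbm 1_{\{\chi>f_{n}\}}$, for which (\ref{C 2}) gives $\|\psi\|\le K_{2}$, localises $N_{n}$ around $a_{n}$ up to an error of order $\max\{a_{n}^{1/2+\epsilon}(\log\log n)^{1/2-\epsilon},\log\log n\}$ on an almost-sure event, which by Borel--Cantelli (with $W$ chosen large enough) forces $N_{n}\le b_{n}$ and $b_{n}-N_{n}=O\bigl(\max\{a_{n}^{1/2+\epsilon}(\log\log n)^{1/2-\epsilon},\log\log n\}\bigr)$ eventually. On this event the deterministic sandwich
\[
\mathsf{T}_n^{f_n}\chi-(b_{n}-N_{n})f_{n}\;\le\; S_{n}^{b_{n}}\;\le\;\mathsf{T}_n^{f_n}\chi
\]
holds, since after removing the $b_{n}$ largest values we have removed every value exceeding $f_{n}$ together with at most $b_{n}-N_{n}$ further values each bounded by $f_{n}$. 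Applying the inequality a second time to $\psi=\upperleft{f_{n}}{}{\chi}$, for which (\ref{C 1}) and (\ref{ineq <l}) give $\|\psi\|\le K_{1}f_{n}$ and $|\psi|_{\infty}\le f_{n}$, yields $|\mathsf{T}_{n}^{f_{n}}\chi-d_{n}|=o(d_{n})$ almost surely. Hypothesis (\ref{cond 1 find bn}) is calibrated exactly so that both $(b_{n}-N_{n})f_{n}=o(d_{n})$ and this centering error are $o(d_{n})$, after a standard passage to a slowly growing subsequence and a monotonicity argument to upgrade the Borel--Cantelli conclusion from the subsequence to all $n$.

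The principal obstacle is the Bernstein inequality itself, specifically its norm dependence: a generic $\bm\phi_{rev}$-mixing bound is too lossy for the sharp rate $a_{n}^{1/2+\epsilon}(\log\log n)^{1/2-\epsilon}$, so one must track the dependence of $\lambda(z)$ and of the spectral-gap constants of $\widehat T_{z}$ through $\|\psi\|$, $|\psi|_{\infty}$ and $\int\psi^{2}\,\rmd\mu$, which are the quantities visible to the transfer operator but invisible to the mixing rate alone. Once this uniform spectral control is secured, the sandwich and the Borel--Cantelli step become essentially mechanical.
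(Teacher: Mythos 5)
Your overall architecture is the same as the paper's: sandwich $\mathsf{S}_n^{b_n}\chi$ between $\mathsf{T}_n^{f_n}\chi$ and $\mathsf{T}_n^{f_n}\chi$ minus (excess count)$\cdot f_n$ (this is Lemma \ref{lem: Prop A B to trimming}), control the truncated sum (Property $\bm{A}$, Theorem \ref{Thm: Sn* allg}) and the exceedance count (Property $\bm{B}$, Lemma \ref{bernoulli}) by a Bernstein-type inequality proved through perturbation of the transfer operator, then Borel--Cantelli; also note $d_n=\int\mathsf{T}_n^{f_n}\chi\,\rmd\mu$, so your centering is exactly the paper's. The genuine gap is the form of the key inequality you display: with variance proxy $n\|\psi\|^2$ it is too weak for both applications, and the entire theorem hinges on this point. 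For $\psi=\mathbbm{1}_{\{\chi>f_n\}}$ one has $\|\psi\|\le K_2$ by \eqref{C 2} but $|\psi|_1=a_n/n$, so your bound gives an exponent of order $\gamma_n^2/n$ rather than $\gamma_n^2/a_n$; with $\gamma_n\asymp a_n^{1/2+\epsilon}(\log\log n)^{1/2-\epsilon}$ that exponent tends to $0$ and no Borel--Cantelli argument can produce the advertised correction term. For the centered truncated observable one has $\|\psi\|\lesssim f_n$ by \eqref{C 1} but $|\psi|_1\lesssim d_n/n$, so your bound gives exponent $\asymp d_n^2/(n f_n^2)$ instead of $\asymp d_n/f_n$; under \eqref{cond 1 find bn} the latter dominates $\log n$ but the former need not: take $1-F(x)=x^{-\alpha}$ and $f_n=n^{\beta}$ with $1/(2\alpha)<\beta<1/\alpha$, then $d_n/f_n\asymp a_n=n^{1-\alpha\beta}\gg\log n$ while $d_n^2/(n f_n^2)\asymp n^{1-2\alpha\beta}\to 0$. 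What is needed, and what the paper proves as Lemma \ref{lemma: Tnfn chi deviation allg}, is an exponent of the form $U\,(u/\|\psi\|)\cdot\min\{u/(n|\psi|_1),1\}$, i.e.\ variance proxy $n\,|\psi|_1\,\|\psi\|$ (morally $n\int\psi^2\rmd\mu$), obtained by bounding every Taylor coefficient of the leading eigenvalue by $k!\,\eta^k\,|\psi|_1\,\|\psi\|^{k-1}$ uniformly in $\psi$ (Lemma \ref{lambda^n}). Your closing remark that one must track $\int\psi^2\rmd\mu$ shows you have located the crux, but the inequality as you state it would not carry the proof.

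A secondary point: at the $\log\log n$ scale a plain Borel--Cantelli over all $n$ cannot work (the individual bounds are only of size $(\log n)^{-c}$), so for Property $\bm{B}$ one must block dyadically and control maxima within blocks while the truncation level $f_n$ itself varies with $n$; the paper does this by proving the exponential bound as a \emph{maximal} inequality (martingale approximation plus a generalized Doob inequality) and discretizing the level via the monotone indicators $f_{n,l},g_{n,l}$ in the proof of Lemma \ref{bernoulli}. Your ``subsequence plus monotonicity'' step is in this spirit but needs that two-parameter version to be made precise. By contrast, for the truncated sum the $\log n$ term in \eqref{cond 1 find bn} makes direct summation over all $n$ work, exactly as in Theorem \ref{Thm: Sn* allg}, so no subsequence argument is needed there.
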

\begin{Rem}
This theorem is the equivalent to \cite[Theorem B]{kessebohmer_strong_2016} for the setting of dynamical systems.
In \cite{kessebohmer_strong_2016} we also give an example how to find a proper trimming function for a given distribution function $F$.
\end{Rem}

As a corollary we obtain that an intermediately trimmed strong law under these conditions always holds.
\begin{Cor}
\label{S b(n) immer} 
Let $\left(\Omega, \mathcal{A}, T , \mu,\mathcal{F},\left\|\cdot\right\|,\chi\right)$ fulfill Property $\mathfrak{D}$.
Then there exist a sequence of natural numbers $\left(b_{n}\right)_{n\in\mathbb{N}}$
with $b_{n}=o\left(n\right)$ and a sequence of positive reals $\left(d_{n}\right)_{n\in\mathbb{N}}$
such that 
\[
\lim_{n\rightarrow\infty}\frac{\mathsf{S}_{n}^{b_{n}}\chi}{d_{n}}=1\text{ a.s.}
\]
\end{Cor}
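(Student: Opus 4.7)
The plan is to exhibit a sequence $(f_n)_{n\in\mathbb{N}}$ satisfying hypothesis \eqref{cond 1 find bn} of Theorem~\ref{find bn} and then invoke that theorem.

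First I would dispose of the finite-mean case: if $\chi\in\mathcal{L}^1(\mu)$, Birkhoff's ergodic theorem applies (mixing in Property $\mathfrak{C}$ implies ergodicity) and yields $\mathsf{S}_n\chi/n\to\int\chi\,\mathrm{d}\mu$ a.s. Taking any $b_n\to\infty$ with $b_n=o(n)$ (for instance $b_n=\lfloor\log n\rfloor$) and $d_n=n\int\chi\,\mathrm{d}\mu$, the discarded portion is bounded by $b_n\cdot\max_{0\leq k<n}\chi\circ T^k$, which is $o(n)$ a.s.\ for integrable $\chi$, so the claim follows. From now on I assume $\int\chi\,\mathrm{d}\mu=\infty$, equivalently $M(x):=\int_0^x t\,\mathrm{d}F(t)\to\infty$ as $x\to\infty$.

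Fix $\epsilon\in(0,1/4)$. The plan is to choose $f_n\to\infty$ so that $a_n:=n(1-F(f_n))\to\infty$ with $a_n=o(n)$ and
\[
\frac{f_n}{n\,M(f_n)}\cdot\max\bigl\{a_n^{1/2+\epsilon}(\log\log n)^{1/2-\epsilon},\,\log n\bigr\}\;\longrightarrow\;0.
\]
Since $p(x):=1-F(x)\to 0$, its Cesàro average $(1/x)\int_0^x p(t)\,\mathrm{d}t$ also vanishes, so $M(x)/x\to 0$; this provides enough freedom to pick $f_n$ with $M(f_n)/f_n$ equal to any sufficiently slowly decaying target $\rho_n$. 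The strategy is to choose $\rho_n\to 0$ with $n\rho_n/\log n\to\infty$ (which handles the $\log n$ summand of the max), and then to exploit the integration-by-parts identity $\int_0^{f_n} p(t)\,\mathrm{d}t = M(f_n)+f_n p(f_n)$ together with $\int_0^\infty p\,\mathrm{d}t=\infty$ to ensure $a_n\to\infty$, $a_n=o(n)$, and to bound the $a_n^{1/2+\epsilon}(\log\log n)^{1/2-\epsilon}$ summand.

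The hard part will be producing $\rho_n$ (equivalently $f_n$) uniformly for \emph{any} infinite-mean distribution $F$. A single universal recipe does not suffice: for regularly varying tails $p(x)\sim x^{-\alpha}L(x)$ with $\alpha\in(0,1)$ the choice $\rho_n\sim(\log n)^2/n$ works, whereas for tails decaying much more slowly (e.g.\ $p(x)=1/\log x$) one is pushed to $\rho_n$ polynomially small in $n$, such as $\rho_n=n^{-\delta}$ for small $\delta>0$. The construction therefore must adapt to the joint growth of $f/M(f)$ and $f\,p(f)/M(f)$ as $f\to\infty$, along the same lines as in the i.i.d.\ analogue treated in \cite{kessebohmer_strong_2016}. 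Once such $f_n$ is produced, Theorem~\ref{find bn} immediately yields the intermediately trimmed strong law with $b_n=\lceil a_n + W\max\{a_n^{1/2+\epsilon}(\log\log n)^{1/2-\epsilon},\log\log n\}\rceil$ and the corresponding $d_n=n\int_0^{f_n} x\,\mathrm{d}F(x)$.
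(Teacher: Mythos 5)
Your reduction is exactly the paper's: its proof of Corollary \ref{S b(n) immer} consists of invoking Theorem \ref{find bn} together with the construction of a suitable $(f_n)$ carried out in the proof of \cite[Theorem A]{kessebohmer_strong_2016}, which is precisely the adaptive, distribution-dependent choice of $f_n$ you describe (and, as you correctly note, no single universal formula works). Your separate treatment of the finite-mean case is not needed (the paper's standing assumption is that $\chi$ is non-integrable, and Theorem \ref{find bn} would in any case apply there directly, e.g.\ with $f_n=\sqrt{n}$); this is just as well, since your bound $b_n\cdot\max_{0\leq k<n}\chi\circ T^{k}=o(n)$ with $b_n=\lfloor\log n\rfloor$ can fail for merely integrable $\chi$ — but that aside does not affect the main argument.
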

\begin{Rem}
We would like to point out  that the sequence $\left(d_{n}\right)_{n\in\mathbb{N}}$
is not necessarily asymptotic to the sequence of expectations $\left(\int\mathsf{S}_{n}^{b_{n}}\chi\mathrm{d}\mu\right)_{n\in\mathbb{N}}$. This has been shown in \cite[Remark 2]{kessebohmer_strong_2016} for a sequence of i.i.d.\   summands.
\end{Rem}

\subsection{Results for regularly varying tails}\label{subsec: reg var results}
For stating our main theorem we set 
\[
\Psi\coloneqq\left\{ u:\mathbb{N}\rightarrow\mathbb{R}^{+}\colon\sum_{n=1}^{\infty}\frac{1}{u\left(n\right)}<\infty\right\}.
\]
Then our main result reads as follows:
\begin{thm}\label{Sb(n)} 
Let $\left(\Omega, \mathcal{A}, T , \mu,\mathcal{F},\left\|\cdot\right\|,\chi\right)$ fulfill Property $\mathfrak{D}$
and let $\chi:\Omega\rightarrow\mathbb{R}_{\geq 0}$ be such that 
$\mu\left(\chi>x\right)=L\left(x\right)/x^{\alpha}$ with $L$ a slowly varying function and $0<\alpha<1$. 
Further, let $\left(b_{n}\right)_{n\in\mathbb{N}}$ be a sequence
of natural numbers tending to infinity with $b_{n}=o\left(n\right)$.
If there exists $\psi\in\Psi$ such that 
\begin{align}
\lim_{n\rightarrow\infty}\frac{b_{n}}{\log\psi\left(\left\lfloor \log n\right\rfloor \right)}=\infty\label{eq: t cond a2}
\end{align}
then there exists a positive valued sequence $\left(d_n\right)_{n\in\mathbb{N}}$ such that 
\begin{align}
\lim_{n\rightarrow\infty}\frac{\mathsf{S}_n^{b_n}\chi}{d_n}=1\text{ a.s.}\label{eq: lim Snbnchi}
\end{align}
and $\left(d_n\right)$ fulfills 
\begin{align}
d_{n}\sim\frac{\alpha}{1-\alpha}\cdot n^{1/\alpha}\cdot b_{n}^{1-1/\alpha}\cdot \left(L^{1/\alpha}\right)^{\#}\left(\left(\frac{n}{b_{n}}\right)^{1/\alpha}\right).\label{bn psi exp}
\end{align}
\end{thm}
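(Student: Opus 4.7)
The plan is to reduce the statement to a truncation argument combined with a sharp concentration inequality coming from the spectral gap. Fix $f_n$ roughly equal to the quantile $F^{\leftarrow}(1-b_n/n)$, so that $a_n\coloneqq n(1-F(f_n))\sim b_n$, and set $d_n\coloneqq n\int_0^{f_n}x\,\mathrm{d}F(x)$, which is (up to the centering issue taken up below) the $\mu$-integral of the truncated sum. Karamata's theorem applied to the regularly varying tail $1-F(x)=L(x)/x^{\alpha}$, $0<\alpha<1$, gives $d_n\sim\frac{\alpha}{1-\alpha}f_na_n$, while the asymptotic inversion of $x\mapsto x^{\alpha}/L(x)$ via the de Bruijn conjugate yields $f_n\sim(n/b_n)^{1/\alpha}(L^{1/\alpha})^{\#}((n/b_n)^{1/\alpha})$. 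Combining the two produces exactly \eqref{bn psi exp}; it then remains to verify \eqref{eq: lim Snbnchi} for this particular choice of $(d_n)$.

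For the truncated Birkhoff sum $\mathsf{T}_n^{f_n}\chi$ I appeal to the spectral gap of $\widehat{T}$ on $\mathcal{F}$. Together with the norm bound $\|\chi\cdot\mathbbm{1}_{\{\chi\le f_n\}}\|\le K_1 f_n$ from \eqref{C 1}, a Nagaev--Guivarc'h-type perturbation of $\widehat{T}$ on the truncated observable yields a Bernstein-type exponential inequality
\begin{equation*}
\mu\bigl(\bigl|\mathsf{T}_n^{f_n}\chi-d_n\bigr|>\eta\,d_n\bigr)\le C_1\exp\bigl(-C_2\,\eta^2\,a_n\bigr)
\end{equation*}
for every sufficiently small $\eta>0$. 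The exponent is $a_n$ because the truncated variance is of order $n\int_0^{f_n}x^2\,\mathrm{d}F(x)\asymp f_n^2 a_n\asymp d_n^2/a_n$, and the fluctuation $\eta d_n$ lies in the sub-Gaussian regime so long as $\eta$ is small.

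Next I run a Borel--Cantelli argument along a subsequence tailored to $\psi\in\Psi$. Choose $(n_k)$ with $\lfloor\log n_k\rfloor\asymp\psi^{-1}(k)$, so that $\log k\asymp\log\psi(\lfloor\log n_k\rfloor)$ and hypothesis \eqref{eq: t cond a2} translates into $b_{n_k}/\log k\to\infty$. Then $\sum_k\exp(-C_2\eta^2 a_{n_k})<\infty$ and Borel--Cantelli forces $\mathsf{T}_{n_k}^{f_{n_k}}\chi/d_{n_k}\to 1$ almost surely; the summability $\sum 1/\psi(n)<\infty$ ensures $n_{k+1}/n_k\to 1$, and the regular variation of $(d_n)$ and $(f_n)$ makes a standard sandwich argument extend the convergence from the subsequence to every $n\in\N$.

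Finally, the truncated and trimmed sums are compared. The counting variable $N_n\coloneqq\#\{1\le k\le n\colon\chi\circ T^{k-1}>f_n\}$ is a Birkhoff sum of $\mathbbm{1}_{\{\chi>f_n\}}$, whose $\mathcal{F}$-norm is bounded by $K_2$ by \eqref{C 2}; the same spectral-gap machinery applied to this indicator produces a Bernstein bound forcing $|N_n-a_n|=o(b_n)$ almost surely, so that eventually all summands exceeding $f_n$ are captured by the $b_n$ largest. The residual discrepancy between what the trimming removes and what the truncation removes is then of order $o(d_n)$, and \eqref{eq: lim Snbnchi} follows. The main obstacle lies in step two: extracting from the spectral gap a Bernstein-type inequality whose exponent is genuinely $\asymp a_n$ (rather than something weaker one would obtain from plain exponential mixing), because this is what permits $b_n$ to approach the nearly optimal rate dictated by \eqref{eq: t cond a2}.
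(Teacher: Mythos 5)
Your outline reproduces the paper's skeleton (truncation at a quantile, an almost-sure law for $\mathsf{T}_n^{f_n}\chi$, concentration for the number of exceedances, a trimmed/truncated comparison, and the Karamata--de Bruijn computation giving \eqref{bn psi exp}), but two of your steps break down in the generality the theorem claims. First, the passage from the subsequence $(n_k)$ to all $n$. Under \eqref{eq: t cond a2} the sequences $(b_n)$, and hence $(f_n)$ and $(d_n)$, need not be monotone or regularly varying: Remark \ref{Rem: log psi log n} exhibits admissible $\psi\in\Psi$ for which $b_n$ may drop to order $\log\log\log n$ along $n=2^k$ while being of order $\log\log n$ elsewhere, so ``the regular variation of $(d_n)$ and $(f_n)$'' is simply not available, $d_{n_{k+1}}/d_{n_k}\not\to1$ in general (even for monotone $b_n$, $n_{k+1}/n_k\to1$ does not force $b_{n_{k+1}}/b_{n_k}\to1$), and a non-monotone $\psi$ has no usable inverse for your choice of $(n_k)$. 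Nor can you avoid the subsequence: $\sum_n\exp(-C_2\eta^2a_n)$ diverges when $a_n\asymp b_n$ is only of order $\log\log n$, and the same obstruction hits your counting estimate $|N_n-a_n|=o(b_n)$, where $\exp(-c\delta^2 b_n)$ is likewise not summable in $n$. This is precisely why the paper abandons the monotone sandwich (which it attributes to the i.i.d.\ method and explicitly says cannot reach such $(b_n)$) and instead runs a double Borel--Cantelli over dyadic blocks $I_j=[2^j,2^{j+1}-1]$ in $n$ combined with a geometric grid $\rho_k=(1+\epsilon)^k$ of truncation levels covering all values of $f_n$, $n\in I_j$ (steps (A)--(B) in the proof of Theorem \ref{Thm: Sn* reg var}, and the analogous blocking with the levels $f_{n,l},g_{n,l}$ in Lemma \ref{bernoulli}); the exponential inequality is proved as a \emph{maximal} inequality (martingale decomposition plus a Doob-type bound) exactly so that all $n$ inside a block are controlled simultaneously. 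Without some replacement for this mechanism your argument only proves the theorem for $(b_n)$ asymptotic to a suitably regular sequence, which is strictly weaker than the statement.

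Second, your comparison of trimmed and truncated sums is one-sided in the wrong way. Taking $f_n$ at the exact quantile gives $a_n\sim b_n$, and $|N_n-a_n|=o(b_n)$ permits $N_n>b_n$ infinitely often; on that event the trimmed sum $\mathsf{S}_n^{b_n}\chi$ still contains $N_n-b_n$ summands exceeding $f_n$, so ``all summands exceeding $f_n$ are captured by the $b_n$ largest'' does not follow, and the discrepancy is no longer bounded by (number of mismatched terms)$\cdot f_n$ --- the retained exceedances are only known to be larger than $f_n$, and bounding them from above would require a second concentration statement at a strictly higher level. The paper sidesteps this by choosing the truncation level \emph{below} the nominal quantile, namely $f_n=F^{\leftarrow}\bigl(1-(b_n-W\cdot c(b_n,n))/n\bigr)-1$ as in \eqref{def tn}, so that Property $\bm{B}$ yields $N_n\le b_n$ eventually almost surely, giving $\mathsf{S}_n^{b_n}\chi\le\mathsf{T}_n^{f_n}\chi$, while the over-trimming error is at most $2\gamma_nf_n=o(d_n)$ (Lemma \ref{lem: Prop A B to trimming}); the fluctuation term $c(b_n,n)$ from \eqref{c(n)} is what makes both directions compatible. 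Finally, a smaller point: your justification of the Bernstein-type bound with exponent $\asymp\eta^2a_n$ rests on an i.i.d.\ variance heuristic; in the dynamical setting it comes from the spectral-gap inequality together with $\bigl\|\chi\cdot\mathbbm{1}_{\{\chi\le f_n\}}\bigr\|\le K_1f_n$ from \eqref{C 1}, and the genuine work --- which you correctly flag as the main obstacle but leave unaddressed --- is making the perturbation theory uniform over the $n$-dependent observables whose norms grow like $f_n$.
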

\begin{Rem}
If $L\left(n\right)=1$, then the norming sequence simplifies to 
\begin{align*}
d_n\sim\frac{\alpha}{1-\alpha}\cdot n^{1/\alpha}\cdot b_{n}^{1-1/\alpha}.
\end{align*}
\end{Rem}

\begin{Rem}\label{Rem: log psi log n}
The condition on the trimming sequence \eqref{eq: t cond a2} seems very technical at the beginning.
If we choose $\psi(n)=n^2$, then $\psi\in\Psi$ and 
$\log\psi\left(\left\lfloor \log n\right\rfloor \right)=2\log\left\lfloor \log n\right\rfloor$ and thus \eqref{eq: t cond a2}
can also be written as $\lim_{n\to\infty}b_n/\log\log n=\infty$. 

However, this more general condition allows us to also consider $\left(b_n\right)$ which are more complicated and particularly not monotonic.
For example one can set
\begin{align*}
 \psi(n)=\begin{cases}
          \left(\log_{2} n\right)^2&\text{ if }n=2^k\text{, }k\in\N\\
          n^2&\text{ else,}
         \end{cases}
\end{align*}
and consider the sequence $(b_n)$ fulfilling 
$\lim_{k\to\infty}b_{2^{2^k}}/\log k=\infty$ for $k\in\N$ and, additionally,
$\lim^\star_{n\to\infty}b_{n}/\log\log n=\infty$, where the last limit $\lim^\star$ reaches  over all $n\in\N$ 
not taking the values $n=2^{2^k}$ with $k\in\N$.
Then $(b_n)$ fulfills condition \eqref{eq: t cond a2} as well.

For the case of i.i.d.\ summands it follows from \cite{haeusler_nonstandard_1993} that the convergence in 
\eqref{eq: lim Snbnchi} also implies $\lim_{n\to\infty}b_n/\log\log n=\infty$. 
This implies that our bound is nearly optimal. 
\end{Rem}

\subsection{Example: Piecewise expanding interval maps}\label{Examples}
For our main example, we first define the space of functions of bounded variation. 
For simplicity, we restrict ourself to the interval $\left[0,1\right]$ and let $\mathcal{B}$ denote the Borel sets of $\left[0,1\right]$. 
\begin{Def}
Let $\varphi:\left[0,1\right]\to\mathbb{R}$. The variation $\mathsf{var}\left(\varphi\right)$ of $\varphi$ is given by 
\begin{align*}
\mathsf{var}\left(\varphi\right)\coloneqq\sup\left\{\sum_{i=1}^n\left|\varphi\left(x_i\right)-\varphi\left(x_{i-1}\right)\right|\colon n\geq 1, x_i\in\left[0,1\right], x_0<x_1<\ldots<x_n\right\}
\end{align*}
and we define
\begin{align*}
\mathsf{V}\left(\varphi\right)\coloneqq  \inf\left\{\mathsf{var}\left(\varphi'\right)\colon \varphi'\text{ is a version of }\varphi\right\}.
\end{align*}
By $BV$ we denote the Banach space of functions of bounded variation, i.e.\ of functions $\varphi$ 
fulfilling $\mathsf{V}\left(\varphi\right)<\infty$. It is equipped with the norm
$\left\|\varphi\right\|_{BV}\coloneqq\left|\varphi\right|_{\infty}+\mathsf{V}\left(\varphi\right)$.
\end{Def}
For further properties of functions of bounded variation see e.g. \cite[Chapter 2]{boyarsky_laws_1997}.

\begin{Prop}\label{prop: Ex1}
Let $\Omega'\subset \left[0,1\right]$ be a dense and open set such that $\mu\left(\Omega'\right)=1$ and
let $\mathcal{I}\coloneqq\left(I_n\right)_{n\in\mathbb{N}}$ be finite or countable family
of closed intervals with disjoint interiors and for any $I_j$ such that the set $I_j\cap \left(\left[0,1\right]\backslash \Omega'\right)$ 
consists exactly of the endpoints of $I_j$.

Then there exists a probability measure $\mu$ absolutely continues with respect to the Lebesgue measure on $\left[0,1\right]$ such that 
$\left(\left[0,1\right], \mathcal{B},\mu, T, BV,\left\|\cdot \right\|_{BV}\right)$ fulfills Property $\mathfrak{C}$
if $T$ fulfills the following properties
\begin{itemize}
\item (Uniform expansion)
$ T_n\coloneqq T\lvert_{\mathring{I}_n}\in \mathcal{C}^1$, and $\left| T_n'\right|\geq m>1$ for any $n\in\mathbb{N}$.
\item
$ T$ is topologically mixing.
\item 
If we set $g\left(x\right)\coloneqq1/\left| T'\left(x\right)\right|$, then $g\lvert_{\mathring{I}_i}$ is a function of bounded variation for all $n\in\mathbb{N}$.
\end{itemize}
\end{Prop}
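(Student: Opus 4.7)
The items of Property $\mathfrak{C}$ that do not involve the dynamics are immediate: $BV$ contains the constants since $\mathsf{V}(c)=0$, and the inequality $\|\varphi\|_{BV}=|\varphi|_\infty+\mathsf{V}(\varphi)\geq|\varphi|_\infty$ required in \eqref{ineq <l} holds by the very definition of the norm. The substantive part is to construct the $T$-invariant probability measure $\mu$ and to verify boundedness and a spectral gap for the associated transfer operator on $BV$. The plan is to follow the classical Lasota--Yorke/Rychlik scheme: first establish these properties for the transfer operator $\widehat{T}_\lambda$ with respect to Lebesgue measure $\lambda$, and then pass to the $\mu$-transfer operator $\widehat{T}$ by conjugation.

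The key step is a Lasota--Yorke inequality. Writing the operator branchwise,
\[
\widehat{T}_\lambda\varphi(x)=\sum_{n} g\bigl(T_n^{-1}x\bigr)\,\varphi\bigl(T_n^{-1}x\bigr)\,\mathbbm{1}_{T(I_n)}(x),
\]
I would bound the variation branch by branch, using the uniform contraction $|T_n'|^{-1}\leq 1/m$ together with the bounded variation of $g|_{\mathring{I}_n}$ and a standard endpoint argument that produces the compact error. The goal is an inequality of the form
\[
\mathsf{V}\bigl(\widehat{T}_\lambda\varphi\bigr)\leq \tfrac{2}{m}\,\mathsf{V}(\varphi)+B\,\|\varphi\|_{L^1}
\]
for some finite $B$. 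Combined with the $L^1$-contraction $\|\widehat{T}_\lambda\varphi\|_{L^1}\leq\|\varphi\|_{L^1}$ and the elementary bound $|\varphi|_\infty\leq \mathsf{V}(\varphi)+\|\varphi\|_{L^1}$ valid on $[0,1]$, this yields the boundedness assertion \eqref{C 0 f} together with uniform iterates $\mathsf{V}\bigl(\widehat{T}_\lambda^k\varphi\bigr)\leq (2/m)^k\mathsf{V}(\varphi)+B'\|\varphi\|_{L^1}$.

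Once the Lasota--Yorke inequality is in place, the compact embedding $BV\hookrightarrow L^1$ (Helly's selection principle) together with the Ionescu-Tulcea--Marinescu / Hennion theorem gives quasi-compactness of $\widehat{T}_\lambda$ on $BV$ with essential spectral radius at most $2/m$. A peripheral fixed point in the non-negative cone of $BV$ provides an invariant density $h$, and I set $\mu:=h\,d\lambda$. Topological mixing of $T$ then rules out every other eigenvalue of modulus one, producing an actual spectral gap for $\widehat{T}_\lambda$ and mixing of $\mu$. To finally obtain Property $\mathfrak{C}$ for the $\mu$-transfer operator $\widehat{T}$ in the form \eqref{hat CYRI}, I would exploit the intertwining $h\cdot\widehat{T}\varphi=\widehat{T}_\lambda(h\varphi)$ together with the fact that $BV$ is a Banach algebra under $\|\cdot\|_{BV}$, so that multiplication by $h$ and by $h^{-1}$ are bounded isomorphisms of $BV$ (after restriction to the essential support of $\mu$), and the spectral gap transfers.

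I expect two steps to require the most care. The first is pushing the Lasota--Yorke estimate through in the countably-many-branches setting in the style of Rychlik, where the summability over $n$ of the boundary contributions must be extracted from the sole hypothesis $g|_{\mathring{I}_n}\in BV$. The second, and arguably more subtle, is the conjugation step: I need $h$ bounded and bounded away from $0$ on $\operatorname{supp}\mu$, which is precisely where the density assumption $\mu(\Omega')=1$ together with topological mixing must be invoked to rule out pathological vanishing of the invariant density.
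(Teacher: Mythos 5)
Your proposal is correct in outline and rests on the same Lasota--Yorke/Rychlik mechanism as the paper, but it is organised quite differently: the paper does not reprove quasi-compactness at all. It checks by hand only that $BV$ is a Banach algebra containing the constants, then introduces the weighted operator $\mathcal{P}f(x)=\sum_{y\in T^{-1}(x)}h(y)f(y)$ with a weight $h$ of bounded variation, vanishing off $\Omega'$, chosen so that $\mathcal{P}^*$ preserves Lebesgue measure, and simply cites Rychlik: Theorem~1 for quasi-compactness of $\mathcal{P}$ on $BV$ with finitely many peripheral one-dimensional projections, Remark~4b for the fact that topological mixing leaves only the eigenvalue $1$, and Theorem~3 for the fixed density, hence the absolutely continuous measure $\mu$. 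Everything you propose to establish directly (the inequality $\mathsf{V}(\widehat{T}_\lambda\varphi)\leq\tfrac{2}{m}\mathsf{V}(\varphi)+B|\varphi|_1$, Helly compactness, Ionescu-Tulcea--Marinescu) is precisely the content of those cited results, so your route is more self-contained rather than genuinely different; note only that with the constant $2/m$ you must pass to an iterate of $T$ when $m\leq 2$. Your two flagged concerns are well placed. The summability over countably many branches indeed cannot be extracted from the branchwise hypothesis $g\lvert_{\mathring{I}_n}\in BV$ alone; the paper avoids this by imposing $\mathsf{V}(h)<\infty$ globally on the weight of $\mathcal{P}$ (Rychlik's standing assumption) and by invoking the Adler/finite-image conditions of Zweim\"uller for infinite partitions, so the limitation is shared by the published argument. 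More interestingly, your conjugation step $h\cdot\widehat{T}\varphi=\widehat{T}_\lambda(h\varphi)$, requiring the invariant density to be bounded and bounded away from zero on its support, addresses a passage the paper leaves entirely implicit: the paper verifies the spectral decomposition only for $\mathcal{P}$ and never explicitly transfers it to the $\mu$-transfer operator of \eqref{hat CYRI}, nor does it spell out mixing of $\mu$. Provided you justify the lower bound on the density (which does hold for topologically mixing piecewise expanding maps), your sketch is in this respect more complete than the paper's own proof.
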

We note here that this example mainly relies on results in \cite{rychlik_bounded_1983} on piecewise expanding interval maps on countable partitions generalizing \cite{lasota_existence_1973} where  finite partitions are considered.

Due to \cite{zweimueller_ergodic_1998} the above properties are fulfilled for an infinite partition $\left(I_n\right)_{n\in\mathbb{N}}$ and the absolutely continuous measure $\mu$ is finite if
\begin{itemize}
 \item (Adler's condition) $ T_i\coloneqq T\lvert_{\mathring{I}_i}\in \mathcal{C}^2$ and
 $ T''/\left( T'\right)^2$ is bounded on $\Omega'$.
 \item (Finite image condition) $\#\left\{ T I_n\colon I_n\in\mathcal{I}\right\}<\infty$.
 \item (Uniform expansion)
 $ T_n\coloneqq T\lvert_{\mathring{I}_n}\in \mathcal{C}^1$, and $\left| T_n'\right|\geq m>1$ for any $n\in\mathbb{N}$.
 \item $ T$ is topologically mixing. 
\end{itemize}

\begin{Prop}\label{prop: Ex2}
The system $\left(\left[0,1\right], \mathcal{B},\mu, T, BV,\left\|\cdot \right\|_{BV},\chi\right)$
with $\mu$ and $T$ 
from Proposition \ref{prop: Ex1}
fulfills Property $\mathfrak{D}$
with $\chi:\left[0,1\right]\to\mathbb{R}_{\geq 0}$ if there exist $\widetilde{K}_1, \widetilde{K}_2>0$ such that for all $\ell\in\mathbb{R}_{\geq 0}$
\begin{align}
\mathsf{V}\left(\upperleft{\ell}{}{\chi}\right)\leq \widetilde{K}_1\cdot {\ell}\label{cond C 1}
\end{align}
and 
\begin{align}
\mathsf{V}\left(\mathbbm{1}_{\left\{\chi>\ell\right\}}\right)\leq \widetilde{K}_2.\label{cond C 2}
\end{align}
\end{Prop}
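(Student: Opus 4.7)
The plan is to verify the four bullets of Property $\mathfrak{D}$ one at a time, using the explicit decomposition $\|\varphi\|_{BV}=|\varphi|_{\infty}+\mathsf{V}(\varphi)$ of the $BV$-norm. The first bullet, that $\left(\left[0,1\right],\mathcal{B},\mu,T,BV,\|\cdot\|\right)$ satisfies Property $\mathfrak{C}$, is inherited verbatim from Proposition \ref{prop: Ex1}, so no work is needed there. Non-negativity of $\chi$ is assumed, and the membership $\chi\in BV^{+}$ will come out as a byproduct of the bound we need to prove for $\upperleft{\ell}{}{\chi}$ by letting $\ell\to|\chi|_{\infty}$.

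For the truncation inequality \eqref{C 1}, the key observation is the pointwise bound $0\leq\upperleft{\ell}{}{\chi}=\chi\cdot\mathbbm{1}_{\{\chi\leq\ell\}}\leq\ell$, which immediately gives $|\upperleft{\ell}{}{\chi}|_{\infty}\leq\ell$. Combining this with the hypothesis \eqref{cond C 1} yields
\[
\|\upperleft{\ell}{}{\chi}\|_{BV}=|\upperleft{\ell}{}{\chi}|_{\infty}+\mathsf{V}(\upperleft{\ell}{}{\chi})\leq\ell+\widetilde{K}_{1}\ell=(1+\widetilde{K}_{1})\ell,
\]
so the constant $K_{1}\coloneqq 1+\widetilde{K}_{1}$ works. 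For the indicator inequality \eqref{C 2}, we use $|\mathbbm{1}_{\{\chi>\ell\}}|_{\infty}\leq 1$ together with \eqref{cond C 2} to get $\|\mathbbm{1}_{\{\chi>\ell\}}\|_{BV}\leq 1+\widetilde{K}_{2}$, and we set $K_{2}\coloneqq 1+\widetilde{K}_{2}$.

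The only technical point, more bookkeeping than a genuine obstacle, is justifying $\chi\in BV$. Since every function of bounded variation on $[0,1]$ is bounded, the conditions \eqref{cond C 1}--\eqref{cond C 2} are only of interest when $\chi$ is (essentially) bounded; taking $\ell=|\chi|_{\infty}$ in \eqref{cond C 1} gives $\mathsf{V}(\chi)=\mathsf{V}(\upperleft{|\chi|_{\infty}}{}{\chi})\leq\widetilde{K}_{1}|\chi|_{\infty}<\infty$, so $\chi\in BV^{+}$. This completes the verification of Property $\mathfrak{D}$, with explicit constants $K_{1}=1+\widetilde{K}_{1}$ and $K_{2}=1+\widetilde{K}_{2}$. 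The substance of the proposition is therefore contained in Proposition \ref{prop: Ex1}; the content here is simply that the two $BV$-seminorm bounds demanded by Property $\mathfrak{D}$ reduce, in this concrete Banach algebra, to the pure variation bounds \eqref{cond C 1} and \eqref{cond C 2}.
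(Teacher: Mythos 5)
Your verification of \eqref{C 1} and \eqref{C 2} is exactly the paper's argument: the paper's proof consists of the single remark that \eqref{cond C 1} and \eqref{cond C 2} yield \eqref{C 1} and \eqref{C 2} with $K_1=1+\widetilde{K}_1$ and $K_2=1+\widetilde{K}_2$, which is precisely your splitting of $\left\|\cdot\right\|_{BV}=\left|\cdot\right|_{\infty}+\mathsf{V}\left(\cdot\right)$ combined with $\left|\upperleft{\ell}{}{\chi}\right|_{\infty}\leq\ell$ and $\left|\mathbbm{1}_{\left\{\chi>\ell\right\}}\right|_{\infty}\leq1$. The one place to be careful is your closing aside: it is not true that \eqref{cond C 1}--\eqref{cond C 2} are ``only of interest'' for bounded $\chi$ --- the observables the paper actually applies this to are unbounded and non-integrable (e.g.\ $\chi(x)=1/x^{\alpha}$ in Remark \ref{rem: doubling map ex}, which satisfies both variation bounds with $\widetilde{K}_1=2$, $\widetilde{K}_2=2$), and for such $\chi$ one cannot conclude $\chi\in BV^{+}$ at all, since $BV$ functions are bounded. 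So your derivation of the bullet $\chi\in\mathcal{F}^{+}$ only covers the (uninteresting) bounded case; this is really a glitch in the paper's formulation of Property $\mathfrak{D}$, and the paper's own proof simply does not address that bullet, checking only \eqref{C 1} and \eqref{C 2}, which is all that is ever used in the subsequent arguments.
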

Setting for example $\chi$ monotonically increasing or decreasing implies that \eqref{cond C 1} and \eqref{cond C 2}
are fulfilled. 
Giving then a system $\left(\left[0,1\right], \mathcal{B},\mu, T, BV,\left\|\cdot \right\|_{BV}\right)$ 
which fulfills Property $\mathfrak{C}$ enables us to apply the theorems from the previous two sections.

\begin{Rem}\label{rem: doubling map ex}
We note here that Property $\mathfrak{D}$ 
includes examples with a mixing structure as in 
\cite{haynes_quantitative_2014} not fulfilling a lightly trimmed strong law. 
Haynes investigated the dynamical system $\left(\left[0,1\right), \mathcal{B}, \lambda_{[0,1)}, T\right)$ with
$Tx=2x\mod 1$ and the observable $\chi(x)= \left\lfloor 1/x\right\rfloor$ 
which does not fulfill a lightly trimmed strong law.

We might compare this system to i.i.d.\ random variables with the same distribution function as the above system
and the $\bm{\psi}$-mixing continued fraction system mentioned in the introduction 
which also has the same distribution function with respect to the Lebesgue measure.
Then we obtain that both systems 
fulfill a lightly trimmed strong law after only removing the maximal term.
The i.i.d.\ case follows after an easy calculation using the results in \cite{mori_stability_1977}.

However, if we take the same system $\left(\left[0,1\right), \mathcal{B}, \lambda_{[0,1)}, T\right)$ but alter $\chi$ 
now being defined as $\chi(x)=\lfloor 1/x^{1/\alpha}\rfloor$ with $0<\alpha<1$, then the system
$\left(\left[0,1\right],\mathcal{B},\lambda\lvert_{\left[0,1\right)},T,BV,\left\|\cdot\right\|_{BV},\chi\right)$
fulfills Property $\mathfrak{D}$, see the above propositions.
In this case we have $\left(y+1\right)^{-\alpha}\leq \mu\left(\chi>x\right)\leq y^{-\alpha}$, i.e.\
$\mu\left(\chi>x\right)$ is regularly varying with index $-\alpha$.
Applying Theorem \ref{Sb(n)} gives that for this system an intermediately trimmed strong law holds.
By the subsequent Remark \ref{Rem: log psi log n} the optimal trimming sequence $(b_n)$ 
also does not change by the fact that the random variables are not independent.

So it seems that intermediately trimmed strong laws are less susceptible of dependence structures than lightly trimmed strong laws. 
\end{Rem}

\section{Main ideas of proofs}\label{sec: main ideas proof}
All our trimming theorems, i.e.\ 
Theorem \ref{find bn} 
with its Corollary \ref{S b(n) immer} and Theorem \ref{Sb(n)} for regularly varying tails
follow the same idea of proof which will be given in the following. 
The remaining parts of the proof will then be given in Section \ref{proofs transfer prelim}.

\subsection{Sketch of proof for almost sure limit theorems}\label{subsec: sketch proof}
In order to prove an intermediately trimmed strong law
we use two main properties discussed in the following and denoted as Properties $\bm{A}$ and $\bm{B}$.
We will first state the properties and then in Lemma \ref{lem: Prop A B to trimming} state how 
together with an additional condition they imply an intermediately trimmed strong law.

In the subsequent sections, i.e.\ Section \ref{subsec: intro truncated rv}
and Section \ref{subsec: intro large dev}
we will give an idea how to prove Properties $\bm{A}$ and $\bm{B}$, 
 what distinguishes them from the case of i.i.d.\ random variables
 and why it is difficult to directly use results about mixing random variables 
 instead of random variables fulfilling Property $\mathfrak{D}$, 
 see Definition \ref{def: Prop D}, i.e.\ fulfilling a spectral gap property.

The first property considers the sum of truncated random variables.
Namely, for $\chi:\Omega\to\mathbb{R}_{\geq 0}$ we define 
\begin{align*}
\prescript{\ell}{}{\chi}\coloneqq\mathbbm{1}_{\left\{\chi\leq \ell\right\}}\cdot\chi,
\end{align*}
for all $\ell\in\mathbb{R}_{\geq 0}$ and for a real valued sequence 
$\left(f_{n}\right)_{n\in\mathbb{N}}$ we let 
\begin{align*}
\mathsf{T}_n^{f_n}\chi&\coloneqq\upperleft{f_n}{}{\chi}+\upperleft{f_n}{}{\chi}\circ  T+\ldots +\upperleft{f_n}{}{\chi}\circ  T^{n-1}
\end{align*}
denote the corresponding truncated sum process.
\begin{Def}
 Let $\left(\Omega, \mathcal{A}, T , \mu,\mathcal{F},\left\|\cdot\right\|,\chi\right)$ fulfill Property $\mathfrak{D}$.
 We say that $(f_n)$ fulfills Property $\bm{A}$ for the system $\left(\Omega, \mathcal{A}, T , \mu,\mathcal{F},\left\|\cdot\right\|,\chi\right)$ if 
\begin{align}
 \lim_{n\to\infty}\frac{\mathsf{T}_n^{f_n}\chi}{\int\mathsf{T}_n^{f_n}\chi\mathrm{d}\mu}=1\text{ a.s.}\label{eq: Tn/ETn intro}
\end{align}
\end{Def}

The second property deals with the average number of large entries and is defined as follows.
\begin{Def}
 Let $\left(\Omega, \mathcal{A}, T , \mu,\mathcal{F},\left\|\cdot\right\|,\chi\right)$ fulfill Property $\mathfrak{D}$.
 We say that a tuple $((f_n),(\gamma_n))$ fulfills Property $\bm{B}$ for the system $\left(\Omega, \mathcal{A}, T , \mu,\mathcal{F},\left\|\cdot\right\|,\chi\right)$ if 
\begin{align*}
 \mu\left(\left|\sum_{i=1}^{n}\left(\mathbbm{1}_{\left\{\chi> f_n\right\}}\circ T^{i-1}-\mu\left(\chi>f_n\right)\right)\right|\geq \gamma_n\text{ i.o.}\right)=0
\end{align*}
\end{Def}

In the following lemma we show how these two properties together with condition \eqref{eq: fn gamman} give an 
intermediately trimmed strong law.

\begin{lemma}\label{lem: Prop A B to trimming}
Let $\left(\Omega, \mathcal{A}, T , \mu,\mathcal{F},\left\|\cdot\right\|,\chi\right)$ which fulfill Property $\mathfrak{D}$.
Further, let $(f_n)$ fulfill Property $\bm{A}$ and let $((f_n),(\gamma_n))$ fulfill Property $\bm{B}$ 
 for the system $\left(\Omega, \mathcal{A}, T , \mu,\mathcal{F},\left\|\cdot\right\|,\chi\right)$.
 If additionally
 \begin{align}
  \lim_{n\to\infty}\frac{\gamma_n\cdot f_n}{\int\mathsf{T}_n^{f_n}\chi\mathrm{d}\mu}=0\label{eq: fn gamman}
 \end{align}
 holds, then we have for $b_n\coloneqq \left\lceil n\cdot \mu\left(\chi>f_n\right)+\gamma_n\right\rceil$ that
\begin{align*}
 \lim_{n\to\infty}\frac{\mathsf{S}_n^{b_n}\chi}{\int\mathsf{T}_n^{f_n}\chi\mathrm{d}\mu}=1\text{ a.s.}
\end{align*}
\end{lemma}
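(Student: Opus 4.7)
The plan is to sandwich $\mathsf{S}_n^{b_n}\chi$ between $\mathsf{T}_n^{f_n}\chi$ and a small correction term, then divide by $\int \mathsf{T}_n^{f_n}\chi\,\mathrm{d}\mu$ and invoke Properties $\bm{A}$ and $\bm{B}$ together with \eqref{eq: fn gamman}. Write
\[
N_n\coloneqq \sum_{i=1}^n \mathbbm{1}_{\{\chi>f_n\}}\circ T^{i-1}
\]
for the (random) number of Birkhoff terms exceeding the truncation level $f_n$. By Property $\bm{B}$, for $\mu$-a.e.\ $\omega$ there exists $n_0(\omega)$ such that $|N_n(\omega)-n\mu(\chi>f_n)|<\gamma_n$ for all $n\geq n_0(\omega)$.

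The key combinatorial identity is: since $\mathsf{S}_n^{b_n}\chi$ removes the $b_n$ largest terms of $\mathsf{S}_n\chi$, whenever $N_n\leq b_n$ those removed terms consist of all $N_n$ terms exceeding $f_n$ together with the $b_n-N_n$ largest terms not exceeding $f_n$. Hence
\[
0 \;\leq\; \mathsf{T}_n^{f_n}\chi - \mathsf{S}_n^{b_n}\chi \;\leq\; (b_n-N_n)\,f_n.
\]
By the choice $b_n=\lceil n\mu(\chi>f_n)+\gamma_n\rceil$ and the bound from Property $\bm{B}$, a.s.\ for large $n$ we have $N_n<n\mu(\chi>f_n)+\gamma_n\leq b_n$, so the upper bound applies, and moreover
\[
b_n - N_n \;\leq\; n\mu(\chi>f_n)+\gamma_n+1 - \bigl(n\mu(\chi>f_n)-\gamma_n\bigr) \;=\; 2\gamma_n+1.
\]
Consequently, $0\leq \mathsf{T}_n^{f_n}\chi - \mathsf{S}_n^{b_n}\chi \leq (2\gamma_n+1)\,f_n$ almost surely for all sufficiently large $n$.

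Dividing through by the positive sequence $\int \mathsf{T}_n^{f_n}\chi\,\mathrm{d}\mu$ gives
\[
\left|\frac{\mathsf{S}_n^{b_n}\chi}{\int\mathsf{T}_n^{f_n}\chi\,\mathrm{d}\mu}-\frac{\mathsf{T}_n^{f_n}\chi}{\int\mathsf{T}_n^{f_n}\chi\,\mathrm{d}\mu}\right| \;\leq\; \frac{(2\gamma_n+1)\,f_n}{\int\mathsf{T}_n^{f_n}\chi\,\mathrm{d}\mu},
\]
and the right-hand side tends to $0$ by hypothesis \eqref{eq: fn gamman}. Combined with Property $\bm{A}$ which says $\mathsf{T}_n^{f_n}\chi/\int\mathsf{T}_n^{f_n}\chi\,\mathrm{d}\mu\to 1$ a.s., this yields the claimed limit.

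The only genuine conceptual step is the sandwich inequality; everything else is bookkeeping. No step looks like a real obstacle here, but one should verify that the assumption $\int \mathsf{T}_n^{f_n}\chi\,\mathrm{d}\mu>0$ for large $n$ (needed to divide) is implicit, which follows because $f_n\to\infty$ and $\chi\in\mathcal{F}^+$ is $\mu$-non-negative and not $\mu$-a.e.\ zero in any nontrivial case (otherwise the statement is vacuous).
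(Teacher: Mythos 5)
Your proof is correct and follows essentially the same route as the paper's: a sandwich $\mathsf{T}_n^{f_n}\chi-(2\gamma_n+O(1))f_n\leq \mathsf{S}_n^{b_n}\chi\leq \mathsf{T}_n^{f_n}\chi$ obtained from Property $\bm{B}$ via the count of terms exceeding $f_n$, followed by division by $\int\mathsf{T}_n^{f_n}\chi\,\mathrm{d}\mu$ and an appeal to Property $\bm{A}$ and \eqref{eq: fn gamman}. You merely spell out the counting argument (and the harmless $+1$ from the ceiling) that the paper's proof leaves implicit.
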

\begin{proof}
We can conclude from Property $\bm{B}$ that a.s. 
\begin{align}
\mathsf{S}_{n}^{b_n}\chi\leq \mathsf{T}_{n}^{f_{n}}\chi\text{ eventually.}\label{San Tn}
\end{align}

On the other hand since $\upperleft{f_n}{}{\chi}\leq f_{n}$ it follows by
Property $\bm{B}$ that a.s. 
\begin{align}
\mathsf{T}_{n}^{f_{n}}\chi-2\gamma_n f_{n}\leq \mathsf{S}_{n}^{b_n}\chi\text{ eventually.}\label{Sbn >}
\end{align}

Combining \eqref{San Tn} and \eqref{Sbn >} yields that a.s.\
\begin{align*}
 \lim_{n\to\infty}\frac{\mathsf{T}_{n}^{f_{n}}\chi-2\gamma_n f_{n}}{\int\mathsf{T}_{n}^{f_{n}}\chi\mathrm{d}\mu}
 \leq \lim_{n\to\infty}\frac{\mathsf{S}_{n}^{b_n}\chi}{\int\mathsf{T}_{n}^{f_{n}}\chi\mathrm{d}\mu}
 \leq \lim_{n\to\infty}\frac{\mathsf{T}_{n}^{f_{n}}\chi}{\int\mathsf{T}_{n}^{f_{n}}\chi\mathrm{d}\mu}\text{ eventually.}
\end{align*}
Combining this with Property $\bm{A}$ and \eqref{eq: fn gamman} gives the statement of the lemma.
\end{proof}

\subsection{Statements about truncated random variables (Property A)}\label{subsec: intro truncated rv} 
We will first state two results giving an answer for which sequences $(f_n)$ Property $\bm{A}$ holds. 
Recall that we denote by $F$ the distribution function of $\chi$ with respect to  $\mu$, i.e.\
$F(x)=\mu(\chi\leq x)$.
\begin{thm}
\label{Thm: Sn* allg} 
Let $\left(\Omega, \mathcal{A},  T, \mu,\mathcal{F},\left\|\cdot\right\|,\chi\right)$
fulfill Property $\mathfrak{D}$.
For a positive valued sequence $\left(f_{n}\right)_{n\in\mathbb{N}}$
assume $F\left(f_{n}\right)>0$ for all $n\in\mathbb{N}$ and there
exists $\psi\in\Psi$ such that 
\begin{align}
\frac{f_{n}}{\int\mathsf{T}_n^{f_n}\chi\mathrm{d}\mu}=o\left(\frac{n}{\log\psi\left(n\right)}\right)\label{cond a}
\end{align}
holds. Then 
\begin{align*}
\lim_{n\rightarrow\infty}\frac{\mathsf{T}_{n}^{f_{n}}\chi}{\int\mathsf{T}_n^{f_n}\chi\mathrm{d}\mu}=1\text{ a.s.}
\end{align*}
\end{thm}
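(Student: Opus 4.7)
The plan is to derive a Bernstein-type exponential concentration inequality for $\mathsf{T}_n^{f_n}\chi$ around its mean $E_n\coloneqq\int\mathsf{T}_n^{f_n}\chi\,\mathrm{d}\mu$ by the Nagaev--Guivarc'h perturbation of the transfer operator, and then to deduce the almost sure convergence $\mathsf{T}_n^{f_n}\chi/E_n\to 1$ by Borel--Cantelli, exploiting the summability $\sum_n 1/\psi(n)<\infty$ built into the definition of $\Psi$.

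For the concentration step, I would introduce, for complex $s$ of small modulus, the twisted transfer operator
$$\widehat{T}_{s,n}f\coloneqq\widehat{T}\bigl(e^{s\,\upperleft{f_n}{}{\chi}}f\bigr).$$
Because $\mathcal{F}$ is a Banach algebra and $\bigl\|\upperleft{f_n}{}{\chi}\bigr\|\leq K_1 f_n$ by \eqref{C 1}, the exponential series converges in $\mathcal{F}$ for $|s|\leq s_0/f_n$ with a universal $s_0>0$, so $s\mapsto\widehat{T}_{s,n}$ is analytic on this disc and $\|\widehat{T}_{s,n}-\widehat{T}\|\to 0$ as $|s|f_n\to 0$, uniformly in $n$. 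Standard Kato-type perturbation theory then supplies a simple analytic dominant eigenvalue $\lambda_n(s)$ with $\lambda_n(0)=1$ and a spectral gap uniform in $n$ and $|s|\leq s_0/f_n$. Expanding $\lambda_n$ at $0$, and using \eqref{C 1} together with the exponential decay of correlations inherent in Property $\mathfrak{C}$, gives
$$\log\lambda_n(s)=s\,m_n+\tfrac{1}{2}s^2 v_n+O\bigl(|s|^3 f_n^2 m_n\bigr),$$
with $m_n=E_n/n$ and $v_n\leq Cf_n m_n$. The operator identity $\int e^{s\mathsf{T}_n^{f_n}\chi}\,\mathrm{d}\mu=\int\widehat{T}_{s,n}^{\,n}\mathbf{1}\,\mathrm{d}\mu\leq C\lambda_n(s)^n$, Markov's inequality, and the near-optimal choice $s\asymp\epsilon/f_n$ then deliver a Bernstein-type bound of the form
$$\mu\bigl(|\mathsf{T}_n^{f_n}\chi-E_n|>\epsilon E_n\bigr)\leq C\exp\bigl(-c\epsilon^2 E_n/f_n\bigr).$$

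The almost sure limit will then follow from Borel--Cantelli applied along a sufficiently fast subsequence $(n_k)$ chosen so that $c\epsilon^2 E_{n_k}/f_{n_k}\geq\log\psi(k)$ eventually; hypothesis \eqref{cond a} is exactly what makes such a choice possible, since it forces $\log\psi(n)/(nE_n/f_n)\to 0$. The extension from the subsequence back to every $n$ is carried out by re-applying the same exponential inequality to the block increments of $\mathsf{T}_n^{f_n}\chi$ between consecutive $n_k$'s. The main obstacle I anticipate is uniformity of the perturbation constants: the spectral-gap estimates for $\widehat{T}_{s,n}$ must not inherit the exploding norm $K_1 f_n$ of $\upperleft{f_n}{}{\chi}$ except through the dimensionless product $sf_n$. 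This is exactly where conditions \eqref{C 1} and \eqref{C 2} of Property $\mathfrak{D}$ play complementary roles---\eqref{C 1} controls the norm of the bounded part of the observable, while \eqref{C 2} bounds the norm of the indicator of its tail---and only together do they yield perturbation estimates uniform in $n$. A secondary technical difficulty is the interpolation step, since $n\mapsto\mathsf{T}_n^{f_n}\chi$ is not monotone because the threshold $f_n$ varies; this will be handled by comparing, within each block $[n_k,n_{k+1}]$, to sums with a common truncation level.
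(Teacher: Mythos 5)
Your first step---the Bernstein-type bound $\mu\bigl(\bigl|\mathsf{T}_n^{f_n}\chi-E_n\bigr|>\epsilon E_n\bigr)\leq C\exp\bigl(-c_\epsilon E_n/f_n\bigr)$, $E_n\coloneqq\int\mathsf{T}_n^{f_n}\chi\,\mathrm{d}\mu$, obtained by perturbing $\widehat{T}$ with $e^{s\,\upperleft{f_n}{}{\chi}}$ and making the Kato estimates uniform in the dimensionless product $|s|\cdot\bigl\|\upperleft{f_n}{}{\chi}\bigr\|$---is essentially the paper's route: this is Lemma \ref{lemma: Tnfn chi deviation allg}, specialized in Lemma \ref{lemma: Tnfn chi deviation}, and \eqref{C 1}, \eqref{C 2} play exactly the roles you assign them (the paper controls all Taylor coefficients of the leading eigenvalue and proves the inequality in maximal form via a martingale decomposition, but your version of this step is adequate for the present theorem).

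The gap is in the Borel--Cantelli step. Hypothesis \eqref{cond a} is there precisely so that \emph{no} subsequence is needed: it is used (see \eqref{xi E tn}) in the form that for every $\epsilon>0$ one has $\epsilon E_n/f_n\geq\log\psi(n)$ for all large $n$, so the exponential bound is dominated by $K_\epsilon/\psi(n)$, which is summable over \emph{all} $n$ by the definition of $\Psi$, and Borel--Cantelli finishes the proof in one line. Your detour through a ``sufficiently fast'' subsequence $(n_k)$ plus block increments cannot be completed in the stated generality: the theorem assumes no monotonicity of $(f_n)$ and no structure on $F$, so within a block $[n_k,n_{k+1}]$ neither the truncation levels $f_m$ nor the means $\int\mathsf{T}_m^{f_m}\chi\,\mathrm{d}\mu$ are mutually comparable (with a fast subsequence the means vary by unbounded factors even for constant truncation), and so comparing to a common truncation level does not sandwich $\mathsf{T}_n^{f_n}\chi/E_n$ between quantities tending to $1$. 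A block-plus-mesh argument of the kind you sketch is exactly what the paper is forced to do---and can only do using regular variation, the geometric mesh of levels $\rho_k$, and the maximal form of the exponential inequality---in Theorem \ref{Thm: Sn* reg var}, where the weaker condition with $\log\psi(\lfloor\log n\rfloor)$ rules out direct summation; importing it here is both unnecessary and unavailable. Relatedly, the form in which you extract the hypothesis, $\log\psi(n)\,f_n/(n\,E_n)\to0$, is too weak even to guarantee that a subsequence with $c\epsilon^2E_{n_k}/f_{n_k}\geq\log\psi(k)$ exists (it is compatible with $E_n/f_n$ staying bounded, in which case no concentration of $\mathsf{T}_n^{f_n}\chi/E_n$ can be expected at all); the condition must enter in the stronger form $f_n\log\psi(n)=o\left(E_n\right)$, as in \eqref{xi E tn} and as in the way Theorem \ref{Thm: Sn* allg} is invoked inside the proof of Theorem \ref{Thm: Sn* reg var}---and once it does, the direct Borel--Cantelli makes the interpolation superfluous.
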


\begin{thm}
\label{Thm: Sn* reg var} 
Let $\left(\Omega, \mathcal{A},  T, \mu,\mathcal{F},\left\|\cdot\right\|,\chi\right)$
fulfill Property $\mathfrak{D}$.
Assume that $F\left(x\right)=L\left(x\right)/x^{\alpha}$ where $0<\alpha<1$ and $L$ is slowly varying. 
Let $\left(f_{n}\right)_{n\in\mathbb{N}}$
be a positive valued sequence and assume that $F\left(f_{n}\right)>0$,
for all $n\in\mathbb{N}$. 
If there exists $\psi\in\Psi$ such that 
\begin{align}
\frac{f_{n}^{\alpha}}{L\left(f_{n}\right)}=o\left(\frac{n}{\log\psi\left(\left\lfloor \log n\right\rfloor \right)}\right),\label{cond}
\end{align}
then we have 
\begin{align}
\lim_{n\rightarrow\infty}\frac{\mathsf{T}_{n}^{f_{n}}\chi}{\int\mathsf{T}_n^{f_n}\chi\mathrm{d}\mu}=1\text{ a.s.}\label{eq: lim Tntn reg var}
\end{align}
\end{thm}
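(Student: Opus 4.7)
My plan is to prove the assertion along an exponentially growing subsequence using the spectral concentration inequality that underlies Theorem \ref{Thm: Sn* allg}, and then to extend the result to all $n$ by an interpolation step exploiting the regular variation of the tail. The crucial point is that the condition $\log\psi(\lfloor\log n\rfloor)$ (instead of $\log\psi(n)$) is exactly the saving afforded by a first Borel--Cantelli lemma applied along an $\exp$-spaced subsequence, where $\log n_k\sim k$.

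First, I would identify the asymptotics of $E_n := \int \mathsf{T}_n^{f_n}\chi\,\mathrm{d}\mu = n\int_0^{f_n}\mu(\chi>x)\,\mathrm{d}x - nf_n\mu(\chi>f_n)$. Since $\mu(\chi>x)=L(x)/x^\alpha$ with $0<\alpha<1$, Karamata's theorem gives, whenever $f_n\to\infty$, $E_n\sim \tfrac{\alpha}{1-\alpha}\,n\,f_n^{1-\alpha}L(f_n)$; the case $\limsup f_n<\infty$ makes the truncated variables uniformly bounded and reduces to a direct Birkhoff argument. Consequently, condition \eqref{cond} is equivalent to $f_n/E_n=o(1/\log\psi(\lfloor\log n\rfloor))$. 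Setting $n_k:=\lceil e^k\rceil$ so that $\log n_k\sim k$, along this subsequence the condition reads $f_{n_k}/E_{n_k}=o(1/\log\psi(k))$. Applying the Nagaev--Guivarc'h type exponential inequality developed in Section \ref{proofs transfer prelim} (the engine behind Theorem \ref{Thm: Sn* allg}) then yields, for each fixed $\epsilon>0$ and all sufficiently large $k$, a bound of the form $\mu\bigl(|\mathsf{T}_{n_k}^{f_{n_k}}\chi - E_{n_k}|\geq\epsilon E_{n_k}\bigr) \leq C_\epsilon/\psi(k)$. Since $\psi\in\Psi$ makes $\sum_k 1/\psi(k)$ convergent, the first Borel--Cantelli lemma produces $\lim_{k\to\infty}\mathsf{T}_{n_k}^{f_{n_k}}\chi/E_{n_k}=1$ $\mu$-a.s.

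It remains to interpolate from the subsequence to all $n$. For $n\in[n_k,n_{k+1}]$ I would sandwich $\mathsf{T}_n^{f_n}\chi$ between two truncated Birkhoff sums with block-constant truncation levels, namely the block-running minimum and maximum of $(f_m)_{m\in[n_k,n_{k+1}]}$, exploiting monotonicity of $\ell\mapsto\upperleft{\ell}{}{\chi}$ together with monotonicity of the sum in its length. The expectations of these sandwich processes can then be compared to $E_{n_k}$ via Potter's inequality for the slowly varying function $L$. The main obstacle is precisely this interpolation: $f_n$ is not assumed monotone, and the ratios $n_{k+1}/n_k=e$ are bounded away from $1$. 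Overcoming it will require either refining the subsequence to $n_k:=\lceil(1+\delta_k)^k\rceil$ with $\delta_k\to 0$ slowly enough to preserve Borel--Cantelli summability, or a careful Potter-style bound controlling the oscillation of $f_n^{1-\alpha}L(f_n)$ within a block. It is exactly this regular-variation-driven block argument that affords the weaker condition \eqref{cond} in comparison with the general Theorem \ref{Thm: Sn* allg}.
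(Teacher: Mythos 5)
There is a genuine gap, and it sits exactly where you flag it: the interpolation step cannot be completed in the form you propose. Two separate problems arise. First, with $n_{k}=\lceil e^{k}\rceil$ the ratio $n_{k+1}/n_{k}$ is bounded away from $1$, and since both $\mathsf{T}_n^{f_n}\chi$ and $E_n$ scale essentially linearly in $n$ across a block, convergence along $(n_k)$ only yields bounds within a fixed multiplicative constant for intermediate $n$; your suggested repair $n_k=\lceil(1+\delta_k)^k\rceil$ with $\delta_k\to0$ destroys the Borel--Cantelli summability, because each value of $\lfloor\log n_k\rfloor$ is then hit roughly $1/\delta_k$ times and $\sum_k 1/\psi(\lfloor\log n_k\rfloor)$ need no longer converge (a fixed small $\delta$ with a diagonal over $\delta$ would cure this first problem, but not the second). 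Second, and fatally, $(f_n)$ is not assumed monotone or regular in any way --- condition \eqref{cond} is only an upper bound on $f_n$ --- so within one block the running minimum and maximum of $f_m$ can differ by an unbounded ratio (e.g.\ $f_n$ alternating between a constant and something of order $n^{1/(2\alpha)}$ satisfies \eqref{cond}). The expectations of your two sandwich processes are then not asymptotically comparable to $E_n$ for the $n$ inside the block, and Potter bounds cannot bridge an oscillation of unbounded ratio; the sandwich therefore does not give $\mathsf{T}_n^{f_n}\chi/E_n\to1$. Relatedly, the reduction ``$\limsup f_n<\infty$ reduces to a direct Birkhoff argument'' is not available, since $f_n$ may oscillate between bounded and unbounded ranges along subsequences.

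The paper's proof resolves precisely these issues by a finer two-parameter blocking. It first splits $f_n$ into $\bar g_n=\min\{f_n,n\}$ (handled directly by Theorem \ref{Thm: Sn* allg} with $\psi(n)=n^2$, since $\bar g_n\le n$) and $g_n=\max\{f_n,n\}\to\infty$. For $g_n$ it discretizes the \emph{truncation levels} geometrically, $\rho_k=(1+\epsilon)^k$, and for each individual $n$ in the dyadic block $I_j=[2^j,2^{j+1}-1]$ compares the deviation event at level $g_n$ with deviation events at the two adjacent discrete levels $\rho_k\le g_n\le\rho_{k+1}$ (inclusions \eqref{ETn-Tn}--\eqref{Tn-ETn}), using regular variation only to compare expectations at levels a factor $1+\epsilon$ apart. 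Then, for each fixed level $\rho_k$, the whole block is controlled by a single application of the \emph{maximal} exponential inequality (Lemmas \ref{lemma: Tnfn chi deviation allg} and \ref{lemma: Tnfn chi deviation}), so only one probability per block and per level enters Borel--Cantelli; the sum over the levels $k\in[q_j,r_j]$ inside a block is handled by a geometric-series estimate driven by the regular variation of $L(x)/x^{\alpha}$, and Lemma \ref{log gamma log tilde gamma} converts $\psi(\lfloor\log n\rfloor)$ into a summable weight indexed by the block $j$. If you want to salvage your outline, you would essentially have to reproduce these ingredients: a maximal inequality over each block (rather than pointwise bounds at block endpoints) and a per-$n$ comparison to a geometrically spaced grid of truncation levels (rather than block-wide min/max of $f$).
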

\begin{Rem}
If additionally $f_{n}$ tends to infinity, we have the more explicit
statement that 
\begin{align*}
\lim_{n\rightarrow\infty}\frac{\mathsf{T}_{n}^{f_{n}}\chi}{\frac{\alpha}{1-\alpha}\cdot n\cdot f_{n}^{1-\alpha}\cdot L\left(f_{n}\right)}=1\text{ a.s.}
\end{align*}
This can be easily obtained from Lemma \ref{E V X*}.
\end{Rem}

\begin{Rem}
The above result also improves a result by Nakata for i.i.d.\ random variables, see \cite{nakata_limit_2015}.
The results in \cite[Theorem 1.2 (ii)]{nakata_limit_2015}
are slightly more general in the sense that he considers two sequences $(l_n)$ and $(c_n)$ both depending 
on $n$, where $(c_n)$ denotes the truncation, in our case denoted by $(f_n)$ and $(l_n)$ denotes the summation index 
which we alway set equal to $n$.

However, his results are restricted to the case that $1-F\left(x\right)\asymp x^{-\alpha}$, where $0<\alpha<1$ and $f(x)\asymp g(x)$ denotes
the existence of $C>0$ such that $1/Cg(x)\leq f(x)\leq C g(x)$, for all $x$.

One example for this setting is $F\left(x\right)=1-1/x^{\alpha}$ with $0<\alpha<1$.
For this example his condition for \eqref{eq: lim Tntn reg var} to hold can in our notation be written as
\begin{align*}
 f_n^{\alpha}=o\left(\frac{n}{\log n}\right).
\end{align*}
Comparing this with the condition that there exists $\psi\in\Psi$ such that
\eqref{cond} holds with $L$ constant, shows that \eqref{cond} is indeed a weaker condition.
\end{Rem}

\subsection{A statement about a large deviation result (Property B)}\label{subsec: intro large dev}
The following lemma gives us a statement to determine under which conditions Property $\bm{B}$ holds.
The formulation is very general so that it enables us to later prove trimming statements 
for different settings.
\begin{lemma}\label{bernoulli}
Set
\begin{align}
c\left(k,n\right)\coloneqq c_{\epsilon,\psi}\left(k,n\right)\coloneqq\left(\max\left\{ k,\log\psi\left(\left\lfloor \log n\right\rfloor \right)\right\} \right)^{1/2+\epsilon}\cdot\left(\log\psi\left(\left\lfloor \log n\right\rfloor \right)\right)^{1/2-\epsilon},\label{c(n)}
\end{align}
for $k\in\mathbb{R}_{\geq1}$, $n\in\mathbb{N}_{\geq3}$,  $0<\epsilon<1/4$,
 and $\psi\in\Psi$.
There exists a constant  $V>0$ such that for all  $ \epsilon\in (0,1/4)$, $\psi\in\Psi$, and  all positive valued sequences $\left(u_n\right)_{n\in\mathbb{N}}$ and $p_n\coloneqq\mu\left(\chi> u_n\right)$, $n\in \mathbb{N}$, we have
\begin{align}
\mu\left(\left\{ \left|\mathsf{S}_n\mathbbm{1}_{\left\{\chi>u_n\right\}}-p_n\cdot n\right|\geq V\cdot c_{\epsilon,\psi}\left(p_{n}\cdot n,n\right)\text{ i.o.}\right\} \right)=0.\label{pn B}
\end{align}
\end{lemma}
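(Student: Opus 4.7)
The plan is to reduce the lemma to a Bernstein--Bennett type exponential inequality for Birkhoff sums of uniformly bounded observables in $\mathcal{F}$, which is the hallmark output of the Nagaev--Guivarc'h perturbative analysis of $\widehat{T}$ mentioned in Section~\ref{subsec: sketch proof}. Applied to the indicator $\phi_n\coloneqq\mathbbm{1}_{\{\chi>u_n\}}$---which by Property $\mathfrak{D}$ satisfies $|\phi_n|_\infty\le 1$, $\|\phi_n\|\le K_2$, and $\mu(\phi_n^2)=p_n$ uniformly in $n$---I expect an inequality of the form
\begin{align*}
\mu\bigl(|\mathsf{S}_n\phi_n-np_n|\ge t\bigr)\le C_1\exp\!\left(-\frac{C_2\,t^2}{np_n+t}\right),
\end{align*}
with $C_1,C_2$ depending only on the data of Property~$\mathfrak{C}$ and on $K_2$, in particular not on the specific threshold $u_n$.

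Substituting $t=V\cdot c_{\epsilon,\psi}(np_n,n)$, I then perform a case analysis on the $\max$ in \eqref{c(n)}. In the \emph{Gaussian} regime $np_n\ge\log\psi(\lfloor\log n\rfloor)$ one checks $t^{2}\ge V^{2}(np_n)\log\psi(\lfloor\log n\rfloor)$ and $t\le V\,np_n$; in the \emph{Poisson} regime $np_n<\log\psi(\lfloor\log n\rfloor)$ one has $t=V\log\psi(\lfloor\log n\rfloor)\ge V\,np_n$. In either case $t^{2}/(np_n+t)\ge (V/2)\log\psi(\lfloor\log n\rfloor)$, yielding $\mu(E_n)\le C_1\,\psi(\lfloor\log n\rfloor)^{-C_2V/2}$, where $E_n$ denotes the event in~\eqref{pn B}. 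Passing to the exponentially spaced subsequence $n_k\coloneqq\lceil e^k\rceil$, along which $\lfloor\log n_k\rfloor=k$, the tail sum $\sum_k\psi(k)^{-C_2V/2}$ is finite for $V$ sufficiently large---independently of $\psi\in\Psi$ and $\epsilon\in(0,1/4)$, since $\psi\in\Psi$ alone gives $\sum1/\psi(k)<\infty$---so Borel--Cantelli delivers $\mu(E_{n_k}\text{ i.o.})=0$ along the subsequence.

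The principal obstacle, and the step requiring the most technical care, is the extension of the subsequential conclusion to all $n$: because $u_n$ is allowed to vary arbitrarily from index to index, no direct monotone telescoping between $\mathsf{S}_n\phi_n$ and $\mathsf{S}_{n_k}\phi_{n_k}$ is available. My plan is to handle this with a discretization in the threshold. At each scale $k$, I introduce a geometric grid $\{U^{(k)}_j\}$ with $\mu(\chi>U^{(k)}_j)\asymp\rho^{-j}$ for some fixed $\rho>1$, then use the monotonicity $\ell\mapsto\mathbbm{1}_{\{\chi>\ell\}}$ to sandwich each $\phi_n$ ($n\in[n_k,n_{k+1})$) between two adjacent grid indicators, and apply a maximal version of the Bernstein bound simultaneously to each grid level. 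A union bound over the grid (whose cardinality I will arrange to grow only polynomially in $k$) combined with verification that the grid discrepancy $n\,|\mu(\chi>U^{(k)}_j)-p_n|$ and the excess arising from taking suprema over $[n_k,n_{k+1})$ stay well below $V\,c_{\epsilon,\psi}(np_n,n)$ then preserves Borel--Cantelli summability in $k$ and pins down the required size of $V$ (and of the threshold $N$).
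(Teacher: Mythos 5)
Your overall architecture (a maximal Bernstein-type inequality coming from the spectral gap, dyadic blocks of indices, a grid of threshold indicators to sandwich $\mathbbm{1}_{\{\chi>u_n\}}$, then Borel--Cantelli) is the same as the paper's, and your reduction of the single-$n$ bound to $\mu(E_n)\le C_1\,\psi\left(\left\lfloor\log n\right\rfloor\right)^{-C_2V/2}$ via the Gaussian/Poisson case split is fine. The genuine gap is in the quantitative design of the bridging grid. A geometric grid with $\mu\left(\chi>U^{(k)}_j\right)\asymp\rho^{-j}$ and only polynomially many (in $k=\lfloor\log n\rfloor$) levels cannot satisfy your own requirement that the discrepancy $n\left|\mu\left(\chi>U^{(k)}_j\right)-p_n\right|$ stays below $V\,c_{\epsilon,\psi}(np_n,n)$: sandwiching between adjacent geometric levels produces an error of order $(\rho-1)\,np_n$, while in the regime $np_n\ge\log\psi\left(\left\lfloor\log n\right\rfloor\right)$ one has $c_{\epsilon,\psi}(np_n,n)=\left(np_n\right)^{1/2+\epsilon}\left(\log\psi\left(\left\lfloor\log n\right\rfloor\right)\right)^{1/2-\epsilon}=o\left(np_n\right)$. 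To make the discrepancy admissible you need relative spacing of order $\left(\log\psi\left(\left\lfloor\log n\right\rfloor\right)/(np_n)\right)^{1/2-\epsilon}$, i.e.\ on the order of $\left(n/\log\psi\right)^{1/2-\epsilon}$ grid levels per block --- exponentially many in $k$, not polynomially many. But then your union bound, which uses the single uniform estimate $\psi(k)^{-C_2V/2}$ per grid level, is no longer summable (e.g.\ for $\psi(k)=k^2$ you would be summing terms of size roughly $\mathrm{e}^{ck}k^{-C_2V}$).

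What rescues the argument --- and what the paper's proof of Lemma \ref{bernoulli} actually does --- is that the exponential bound from Lemma \ref{lemma: Tnfn chi deviation allg} improves with the grid level: writing $l\approx 2^{n+1}\cdot(\text{tail probability})$, the deviation threshold $\min_{r\in I_n}c(l,r)$ grows like $l^{1/2+\epsilon}$, so the per-level probability is of order $\exp\left(-\kappa_n^{1-2\epsilon}\,l^{2\epsilon}\right)$, and summing this over the exponentially many levels $l=\kappa_n,\dots,2^{n+1}$ still gives only a constant times $\exp(-\kappa_n)\lesssim 1/\omega(n)$ with $\omega\in\Psi$ (this is precisely the computation \eqref{m qn 2^n}, and it is exactly what the exponent $1/2+\epsilon$ in \eqref{c(n)} is for). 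Your sketch never uses this level-dependent gain, so either the discrepancy control or the summability fails. In addition, the very small tail-probability regime $np_n<\kappa_n$ is not covered by the sandwich (the grid cannot be refined below one unit of discrepancy there) and has to be treated separately, as the paper does in its part \ref{en: large dev c} with the cutoff indicator $h_n$; in that regime the downward deviation event is empty and only the upward one needs the exponential bound. With a block-adapted arithmetic grid of spacing $2^{-(n+1)}$ (discrepancy at most $1$), the level-dependent summation, and the separate small-$p$ case, your plan becomes the paper's proof; as written, it does not close.
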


We note here that $\mathsf{S}_n\mathbbm{1}_{\left\{\chi>u_n\right\}}=\sum_{k=1}^n\mathbbm{1}_{\left\{\chi\circ T^{k-1}>u_n\right\}}$.
The lemma is an analogous statement to \cite[Theorem 8]{kessebohmer_strong_2016} for the setting of dynamical systems instead of independent random variables.

\subsection{Comments about the method of proof}\label{subsubsec: comments about method}
The method to first prove a statement as in Property $\bm{A}$ in order to prove an intermediately trimmed strong law
coincides with the method used for many trimming statements in the i.i.d.\ setting, for instance in \cite[Last part of the proof of Lemma 2]{haeusler_laws_1987} and \cite[Lemma 6]{mori_stability_1977}. 

The method to prove trimming results with a large deviation statement as in Property $\bm{B}$ 
is the same as in \cite{mori_stability_1977} and \cite{kessebohmer_strong_2016}, 
but differs for example from \cite{haeusler_laws_1987}
who used a quantile transformation. 
However, the quantile transformation heavily relies on the independence of the random variables, 
so it does not seem reasonable to transfer this method to our case.

The main idea for proving that $\left(f_n\right)$ fulfills Property $\bm{A}$
is to use one version of an exponential inequality, in the i.i.d.\ case usually the Bernstein inequality, to estimate
\begin{align}
 \mu\left(\left|\mathsf{T}_n^{f_n}\chi-\int\mathsf{T}_n^{f_n}\chi\mathrm{d}\mu\right|> \epsilon \int\mathsf{T}_n^{f_n}\chi\mathrm{d}\mu\right).\label{eq: mu(Tn -ETn) intro}
\end{align}
Using in the following a Borel-Cantelli argument for the right choice of $(f_n)$ gives a statement as in \eqref{eq: Tn/ETn intro}.
Similarly, 
to prove that a tuple $((f_n),(\gamma_n))$ fulfills Property $\bm{B}$ one can 
use an exponential inequality estimating
\begin{align*}
 \mu\left(\left|\mathsf{S}_n\mathbbm{1}_{\left\{\chi>f_n\right\}}-\mu\left(\chi>f_n\right)\cdot n\right|\geq \gamma_n\right)
\end{align*}
in order to apply a Borel-Cantelli statement afterwards. 

In particular for proving a Property $\bm{A}$ statement it is difficult 
to use an exponential inequality for mixing random variables. 
To explain this we will in the following distinguish the notation between random variables from a dynamical system, 
writing $\mathsf{S}_{n}\chi$, $\mathsf{T}^r_{n}\chi$, $\mu$, $(\chi\circ T^{n-1})$ and $\int \chi\mathrm{d}\mu$
and generic random variables, for example i.i.d.\ writing 
$S_n$, $T_n^r$, $\mathbb{P}$, $(X_n)$, and $\mathbb{E}\left(X_1\right)$.

 The first difficulty is that exponential inequalities usually 
 depend on the maximal bound of the considered random variables. In our case we have that $\left|\prescript{f_n}{}{\chi}\right|_\infty=f_n$
 and $f_n$ is depending on $n$.
 Using a direct application of the Bernstein inequality as in \cite[Lemma 10]{kessebohmer_strong_2016} we obtain for the i.i.d.\ case that 
 \begin{align}
 \mathbb{P}\left(\left|T_n^{f_n}-\mathbb{E}\left(T_n^{f_n}\right)\right|> \epsilon \mathbb{E}\left(T_n^{f_n}\right)\right)
 &\leq \exp\left(-\frac{3\epsilon^2}{6+2\epsilon}\cdot \frac{\mathbb{E}\left(T_n^{f_n}\right)}{f_n}\right).\label{eq: Bernstein ineq truncated sum}
 \end{align}
 The easiest way to prove a statement as in \eqref{eq: Tn/ETn intro} is then to determine $\left(f_n\right)$ such that
 \begin{align}
  \sum_{n=1}^{\infty}\exp\left(-\frac{3\epsilon^2}{6+2\epsilon}\cdot \frac{\mathbb{E}\left(T_n^{f_n}\right)}{f_n}\right)<\infty.\label{eq: sum Bernstein}
 \end{align}
 In order to be precise we have to remark here that in our proof we actually use a different sum given in \eqref{eq: sum for BC}. 
 However, the problem explained in the following does not change by considering the sum in \eqref{eq: sum Bernstein} instead.

 In case that we have mixing random variables the situation becomes more difficult than for independent random variables. 
 Even though there is a vast literature about exponential inequalities for mixing random variables, 
 it is difficult to obtain results as strong as in the i.i.d.\ case using them. 
 We will explain the difficulties by explaining some of the more recent results.
 Many exponential inequalities are given with a constant depending on the distribution function of the considered random variable and, consequently, depend on the bounds $\left(f_n\right)$ and the dependence can not be easily determined 
 by the proof of these inequalities, this is for example the case in \cite{merlevede_bernstein_2011} and some statements in \cite{doukhan_mixing:_1994}.
 For instance, \cite[Theorem 1]{merlevede_bernstein_2011} states that under some regularity conditions on the random variables $(X_n)$
 there exist constants $\gamma$ and $C_1$ such that 
 \begin{align*}
  \mathbb{P}\left(\sup_{j\leq n} \left|S_j\right|>x\right)\leq n\cdot \exp\left(-\frac{x^{\gamma}}{C_1}\right)+\text{ further summands}.
 \end{align*}
 In their theorem, $\gamma$ can be calculated by the distribution function of $X_1$ and the mixing properties of $(X_n)$,
 but $C_1$ is dependent on $\gamma$ which changes if one considers $S_n^{f_n}$ with $f_n$ not constant.

  Other results, for example \cite{adamczak_tail_2008}, require a more restricted setting than considered here, 
  covering only Markov chains.

 In other publications as in \cite{merlevede_bernstein_2009} the bound depending on $\left(f_n\right)$ is given, but the obtained results 
 are weaker than in the i.i.d.\ case. Theorem 2 of this publication states that for exponentially $\bm{\alpha}$-mixing identically distributed random variables $(X_n)$ there exists a constant $C>0$ such that for all $n\in\N$ and $x\geq 0$
 \begin{align*}
  \mathbb{P}\left(\left|S_n-\mathbb{E}\left(S_n\right)\right|>x\right)\leq \exp\left(-\frac{C\cdot x^2}{n\cdot v^2+M^2+xM\left(\log n\right)^2}\right),
 \end{align*}
 where $\left|X_i\right|\leq M$ and $v^2=\sup_{i>0}\left(\mathbb{V}\left(X_i\right)+2\sum_{j>i}\left|\mathrm{Cov}\left(X_i,X_j\right)\right|\right)>0$ 
 and $\mathbb{V}(Y)$ denotes the variance of $Y$.
 Applying this to our case this result yields 
 \begin{align*}
  \mu\left(\left|\mathsf{T}_n^{f_n}\chi-\int \mathsf{T}_n^{f_n}\chi\mathrm{d}\mu\right|>\epsilon\cdot \int \mathsf{T}_n^{f_n}\chi\mathrm{d}\mu\right)
  &\leq \exp\left(-\frac{C\cdot \left(\epsilon\cdot \int \mathsf{T}_n^{f_n}\chi\mathrm{d}\mu\right)^2}{n\cdot v^2+f_n^2+\epsilon\cdot\int \mathsf{T}_n^{f_n}\chi\mathrm{d}\mu\cdot f_n\left(\log n\right)^2}\right).
 \end{align*}
 For the example $F\left(x\right)=1-1/x^{\alpha}$ with $0<\alpha<1$ we have by \eqref{E S*} that 
 \begin{align*}
  \exp\left(-\frac{C\cdot \left(\epsilon\cdot \int\mathsf{T}_n^{f_n}\chi\mathrm{d}\mu\right)^2}{n\cdot v^2+f_n^2+\epsilon\cdot \int \mathsf{T}_n^{f_n}\chi\mathrm{d}\mu\cdot f_n\left(\log n\right)^2}\right)
  &\geq \exp\left(-\frac{C\cdot \left(\epsilon\cdot \int\mathsf{T}_n^{f_n}\chi\mathrm{d}\mu\right)^2}{f_n^2}\right)\\
  &\geq \exp\left(-\frac{C\cdot \epsilon^2\cdot \alpha^2}{\left(1-\alpha\right)^2}\cdot\frac{n}{f_n^{2\alpha}}\right).
 \end{align*}
 As a comparison, in the i.i.d.\ case we obtain by the Bernstein inequality \eqref{eq: Bernstein ineq truncated sum}
 for the same distribution function
 \begin{align*}
  \mathbb{P}\left(\left|T_n^{f_n}-\mathbb{E}\left(T_n^{f_n}\right)\right|>\epsilon\cdot \mathbb{E}\left(T_n^{f_n}\right)\right)
  &\leq \exp\left(-\frac{3\epsilon^2}{6+2\epsilon}\cdot \frac{\alpha}{1-\alpha}\cdot\frac{n}{f_n^{\alpha}}\right).
 \end{align*}
 This difference in the denominator changes the set of sequences $(f_n)$ which fulfill \eqref{eq: sum Bernstein} a lot
 and thus also statements about the set of sequences $(f_n)$ fulfilling Property $\bm{A}$.
 Similar problems occur by applying other theorems in this publication.
 
 Other statements, see for example \cite[Theorem 4, Chapter 1.4.2]{doukhan_mixing:_1994}, give an exponential inequality with an additional summand depending on the speed of mixing. 
 For $\left(X_n\right)_{n\in\mathbb{N}}$ being a sequence of non-negative, $\bm{\beta}$-mixing (a slightly weaker form of mixing than $\bm{\phi}_{\textrm{rev}}$-mixing), identically distributed 
 random variables with $\lim_{n\rightarrow\infty}\bm{\beta}\left(n\right)\cdot n=0$ 
Doukhan states for any $0<\delta<1$ and $0\leq\eta\leq n/\left(1+\delta^2/4\right)$ that
\begin{align}
\mathbb{P}\left(\left|S_n-\mathbb{E}\left(S_n\right)\right|\geq x\right)
&\leq 4\exp\left(-\frac{\left(1-\delta\right)\cdot x^2}{2\left(n\cdot \sigma^2+\frac{1}{3}\cdot \eta\cdot M\cdot x\right)}\right)+2\cdot\frac{n\cdot{\bm \beta}\left(\left\lfloor \eta\cdot \delta^2/4\right\rfloor-1\right)}{\eta},\label{eq: m mix bernstein 0}
\end{align} 
where $\left|X_1\right|\leq M$ and $\sigma^2\geq \mathbb{V}\left(\sum_{k=i}^{i+j-1}Y_k\right)/j$ for all $i,j\in\mathbb{N}$.
 
In order to obtain a summable expression for an estimate of \eqref{eq: mu(Tn -ETn) intro}
one would have to guarantee that both summands in \eqref{eq: m mix bernstein 0} are summable.
For the second summand this can only be ensured if $\eta_n\coloneqq \eta$ is tending to infinity for $n$ tending to infinity.
However, this increases the first summand,
leading to worse results than in the i.i.d.\ case.

Further, we want to point out that Adamczak gives an example in \cite{adamczak_tail_2008} that for observables on an infinite state aperiodic and irreducible Markov chain 
the estimation can not be as good as in the independent case in that sense that the expression in the exponent differs by more than only a constant.
 Even though it is not clear if this example fulfills a $\bm{\phi}_{\textrm{rev}}$-mixing property
 and it does not fulfill a $\bm{\phi}$-mixing property, by \cite[Theorem 3.2]{bradley_basic_2005} this Markov chain is at least exponentially $\bm{\beta}$-mixing.
 From this consideration it might be reasonable to assume 
 that the lack of independence is a reason that exponential inequalities for $\bm{\phi}_{\textrm{rev}}$-mixing random variables
 can not be formulated with the same sharpness as in the i.i.d.\ case.
\medskip

 Instead of using results for mixing random variables
 we prove an exponential inequality for dynamical systems fulfilling Property $\mathfrak{C}$, stated as follows:
 \begin{lemma}\label{lemma: Tnfn chi deviation allg}
Let $\left(\Omega, \mathcal{A},  T, \mu,\mathcal{F},\left\|\cdot\right\|\right)$ fulfill Property $\mathfrak{C}$.
Then there exist positive constants $K$, $N$, $U$
such that for all 
$\varphi\in\mathcal{F}$ fulfilling $\int\varphi\mathrm{d}\mu=0$, all 
$u\in\mathbb{R}_{>0}$, 
and all $n\in\N_{>N}$ we have
\begin{align*}
\mu\left(\max_{i\leq n}\left|\mathsf{S}_i\varphi\right|\geq  u\right)
&\leq K\cdot \exp\left(-U\cdot \frac{ u}{\left\|\varphi\right\|}\cdot\min\left\{\frac{u}{n\cdot\left|\varphi\right|_1},1\right\}\right).
\end{align*}
\end{lemma}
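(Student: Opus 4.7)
The plan is to follow the Nagaev–Guivarc'h spectral perturbation strategy for the twisted transfer operator associated with $\varphi$, turning the spectral gap of $\widehat{T}$ into a sharp bound on the Laplace transform of $\mathsf{S}_n\varphi$, and then converting this into the Bernstein-type tail bound via a Chernoff argument. First I would define, for real $z$, the twisted operator $\widehat{T}_z f:=\widehat{T}(e^{z\varphi}f)$; since $\mathcal{F}$ is a Banach algebra containing the constants with $\|\cdot\|\ge|\cdot|_\infty$, the series $e^{z\varphi}=\sum_{k\ge 0}(z\varphi)^k/k!$ converges in $\mathcal{F}$ with $\|e^{z\varphi}\|\le e^{|z|\|\varphi\|}$, so $z\mapsto\widehat{T}_z$ is norm-analytic, and iteration of \eqref{hat CYRI} gives the key identity $\int e^{z\mathsf{S}_n\varphi}\,d\mu=\int\widehat{T}_z^n\mathbbm{1}\,d\mu$. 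The spectral gap of $\widehat{T}=\widehat{T}_0$ combined with Kato's analytic perturbation theory then supplies $z_1>0$, $\rho\in(0,1)$, $C>0$, and analytic maps $z\mapsto(\lambda(z),P_z,N_z)$ on $|z|\le z_1/\|\varphi\|$ with
\begin{align*}
\widehat{T}_z^n=\lambda(z)^n P_z+N_z^n,\qquad \|N_z^n\|\le C\rho^n,\qquad \lambda(0)=1,\qquad P_0 f=\mathbbm{1}\cdot\int f\,d\mu,
\end{align*}
and differentiating $\widehat{T}_z h_z=\lambda(z)h_z$ at $z=0$ gives $\lambda'(0)=\int\varphi\,d\mu=0$.

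The quantitative heart of the proof lies in establishing the Bernstein-sharp estimate
\begin{align*}
\lambda(z)\le\exp\bigl(C_1 z^2\,|\varphi|_1\,\|\varphi\|\bigr),\qquad 0\le z\le z_1/\|\varphi\|.
\end{align*}
Using the algebra bound $\|e^{z\varphi}-1-z\varphi\|\le z^2\|\varphi\|^2 e^{|z|\|\varphi\|}/2$ and the expansion $\lambda(z)=\int e^{z\varphi}h_z\,d\mu/\int h_z\,d\mu$ to second order, one finds $\lambda''(0)=\int\varphi^2\,d\mu+2\sum_{k\ge 1}\int\varphi\cdot(\varphi\circ T^k)\,d\mu$, the asymptotic variance. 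The first summand is bounded by $|\varphi|_\infty\,|\varphi|_1\le\|\varphi\|\,|\varphi|_1$, and each covariance $\int\varphi\cdot(\varphi\circ T^k)\,d\mu=\int\widehat{T}^k\varphi\cdot\varphi\,d\mu$ is estimated using the exponential decay of correlations supplied by Property~$\mathfrak{C}$ by $\|\widehat{T}^k\varphi\|\cdot|\varphi|_1\le C\theta^k\|\varphi\|\,|\varphi|_1$, which sums to $C'\|\varphi\|\,|\varphi|_1$. This is the main obstacle: a naive Taylor expansion produces only $\|\varphi\|^2$ in the variance term, which is far too crude for the application to Lemma~\ref{bernoulli}, where $\varphi=\mathbbm{1}_{\{\chi>u_n\}}-p_n$ has $|\varphi|_1\asymp p_n\ll 1\le\|\varphi\|$. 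Getting the mixed factor $|\varphi|_1\|\varphi\|$ relies precisely on the fact that in every term of the perturbation series one copy of $\varphi$ is eventually integrated against the invariant measure, trading $|\varphi|_\infty\le\|\varphi\|$ for $|\varphi|_1$; handling the remainder of the expansion uniformly in $z$ is what requires real care.

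With the eigenvalue bound in hand, Markov's inequality gives, for $z\in[0,z_1/\|\varphi\|]$,
\begin{align*}
\mu(\mathsf{S}_n\varphi\ge u)\le e^{-zu}\!\int\widehat{T}_z^n\mathbbm{1}\,d\mu\le C_2\exp\bigl(-zu+C_1 n z^2|\varphi|_1\|\varphi\|\bigr).
\end{align*}
Optimising in $z$ yields two branches: for $u/(n|\varphi|_1)\le 2C_1 z_1$ the interior choice $z=u/(2C_1 n|\varphi|_1\|\varphi\|)$ produces the Gaussian regime $\exp(-U u^2/(n|\varphi|_1\|\varphi\|))$, while beyond that threshold the boundary choice $z=z_1/\|\varphi\|$ produces the linear regime $\exp(-U u/\|\varphi\|)$; these combine into the stated $\min$, and applying the same argument to $-\varphi$ handles the lower tail. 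Finally, to upgrade from $|\mathsf{S}_n\varphi|$ to $\max_{i\le n}|\mathsf{S}_i\varphi|$, I would apply the Chernoff bound to each index; since the exponent is monotone decreasing in $i$, a union bound costs only a factor of $n$, which for $n\ge N$ is absorbed into the constants $K$ and $U$ by shrinking the rate by a fixed fraction (and if the exponent is too small to absorb $\log n$, the target inequality is trivial by choosing $K$ large). This yields the stated inequality.
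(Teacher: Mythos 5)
Your overall strategy (twisted transfer operator, Chernoff bound on $\int e^{t\mathsf{S}_n\varphi}\,\mathrm{d}\mu$, two-regime optimisation in $t$) is the same as the paper's, but the way you treat the maximum is a genuine gap, and it is fatal for the lemma as stated. A union bound over $i\le n$ costs a factor $n$, and you propose to absorb it either by shrinking $U$ or by declaring the inequality trivial when the exponent is small. Neither device covers the intermediate regime: the exponent $E=U\frac{u}{\|\varphi\|}\min\{\frac{u}{n\left|\varphi\right|_1},1\}$ is in no way tied to $\log n$, and when $1\ll E\ll\log n$ (for instance $E\asymp\log\log n$, which is exactly the regime in which the lemma is applied in Lemma \ref{bernoulli} and in the proof of Theorem \ref{Thm: Sn* reg var}, where the exponents are of order $\log\psi\left(\left\lfloor\log n\right\rfloor\right)$) the union bound only yields $\min\{1,\,nC_2e^{-2UE}\}=1$, whereas the asserted right-hand side $Ke^{-UE}$ tends to $0$; so this route cannot prove the stated maximal inequality, and even a weakened version with a lost factor $n$ would be useless for the paper's Borel--Cantelli arguments over dyadic blocks. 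The paper avoids the factor $n$ altogether by a martingale--coboundary decomposition: with $\mathsf{R}\varphi\coloneqq\sum_{k\ge1}\widehat{T}^k\varphi$, which satisfies $\left|\mathsf{R}\varphi\right|_\infty\le C\left\|\varphi\right\|/2$ by the decay of correlations, one writes $\varphi=h+(\mathsf{R}\varphi)\circ T-\mathsf{R}\varphi$ so that $\mathsf{S}_nh$ is a martingale, and a generalized Doob inequality applied to $x\mapsto e^{tx}$ (Lemmas \ref{doob ineq} and \ref{lemma 2 2nd part}) bounds $\mu\left(\max_{i\le n}\left|\mathsf{S}_i\varphi\right|\ge u\right)$ by $2e^{t(-u+C\left\|\varphi\right\|)}\int e^{t\mathsf{S}_n\varphi}\mathrm{d}\mu$, i.e.\ at the cost only of a harmless factor $e^{tC\left\|\varphi\right\|}\le e^{C\varkappa'}$.

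The second gap is the eigenvalue estimate itself. You correctly identify that the whole point is $\lambda_t\le\exp\left(C_1t^2\left|\varphi\right|_1\left\|\varphi\right\|\right)$ uniformly for $0\le t\le\varkappa'/\left\|\varphi\right\|$, and your bound on $\lambda''(0)$ via the asymptotic variance and decay of correlations is fine, but you explicitly leave the uniform control of the higher-order terms (and the uniformity in $\varphi$ of the Kato decomposition, i.e.\ that the radius and the bound $\left\|N_t^n\right\|\le\left|\lambda_t\right|^n$ can be chosen independently of $\varphi$) as ``requiring care'' without an argument; that is precisely where the paper's work lies. Lemmas \ref{spect sep analyt} and \ref{lemma: resolvent} furnish the uniform radius $\varkappa'$ and the uniform resolvent/projection bounds, and Lemma \ref{lambda^n} proves by induction, using Cauchy estimates on $P_0^{(k)}$ derived from the uniform bound on $\left|P_z1\right|_\infty$, that $\lambda_0^{(k)}\le k!\,\eta^k\left|\varphi\right|_1\left\|\varphi\right\|^{k-1}$ for all $k\ge2$; summing the full Taylor series then gives $\left|\lambda_t\right|\le1+2\eta^2t^2\left|\varphi\right|_1\left\|\varphi\right\|$, after which your Chernoff optimisation coincides with the paper's final step. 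As written, the proposal neither proves the maximal form of the inequality nor the key eigenvalue bound it relies on.
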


 Additionally to their mixing properties the random variables $\left(\varphi\circ T^{n-1}\right)$ obtain additional structures which facilitate to obtain an exponential inequality.
 More precisely, we first use Markov's inequality to obtain the statement in Lemma \ref{lemma 2 2nd part}. 
 The subsequent estimation of $\int\exp\left(t\cdot \mathsf{S}_n\varphi\right)\mathrm{d}\mu$ seems to give better results using analytic perturbation theory 
 than using blocking techniques for mixing random variables.
 
 The main idea here is to use perturbation theory for the perturbed transfer operator 
 $\widehat{ T}_zh\coloneqq\upperleft{\varphi}{\widehat}{ T_z}h\coloneqq\widehat{ T}\left(\mathrm{e}^{z\cdot \varphi}\cdot h\right)$.
 This gives us $\int\mathrm{e}^{t\cdot \mathsf{S}_n\varphi}\mathrm{d}\mu=\int \widehat{ T}_t^n 1\mathrm{d}\mu$,
 see the proof of Lemma \ref{e Snphi}.
 We assume that $\widehat{T}$ has a spectral gap and prove in Lemma \ref{spect sep analyt} that this is also true for $\upperleft{\varphi}{\widehat}{ T_t}$
 if $\left\|\varphi\right\|\cdot t$ is small enough. 
 This will eventually lead us to the result that $\int \widehat{ T}_t^n 1\mathrm{d}\mu
 \leq K'\cdot \left|\upperleft{\varphi}{}{\lambda}_t\right|^n$
 with $\lambda_t\coloneqq \upperleft{\varphi}{}{\lambda}_t$ being the largest eigenvalue of the operator $\upperleft{\varphi}{\widehat}{ T}_t$ and $K'$ a constant,
 see Lemma \ref{e Snphi}.
 Finally, we estimate $\lambda_t$ using the Taylor expansion $\sum_{k=0}^{\infty}t^k\cdot\lambda_0^{\left(k\right)}/k!$, see Lemma \ref{lambda^n}. 
 
 The proof of Lemma \ref{lemma: Tnfn chi deviation allg} seems rather lengthy 
 compared to the proof of limit theorems using analytic perturbation theory, for example the central limit theorem. 
 One reason for that is that in Lemma \ref{spect sep analyt} we need a uniform bound of $\left\|\varphi\right\|\cdot t$
 for $\upperleft{\varphi}{\widehat}{ T_t}$ to fulfill a spectral gap property. Such results are usually only developed for 
 $\upperleft{\varphi}{\widehat}{ T_t}$ for a fixed $\varphi$. 
 A similar problem occurs in Lemma \ref{lemma: resolvent}. 
 
 Furthermore, in order to obtain optimal results we indeed have to estimate $\lambda_0^{(k)}$, for all $k\in\N$, see Lemma \ref{lambda^n},
 not stopping after a certain term of the Taylor expansion.

Additionally, we prove this exponential inequality as a maximal inequality. 
Even though there is literature generalizing exponential inequalities to maximal exponential inequalities, see \cite{kevei_more_2013},
we obtain better results using a direct approach, i.e.\
we transform the sum $\mathsf{S}_n\varphi$ into a martingale and apply a generalization of Doob's inequality, 
see Lemma \ref{doob ineq}.
\medskip

With respect to Lemma \ref{bernoulli} we remark that the considered random variables are all bounded by $1$ 
and it might therefore be possible to directly use exponential inequalities stated for mixing random variables.
However, for brevity we used Lemma \ref{lemma: Tnfn chi deviation allg} again.
The obtained results only differ from the results for i.i.d.\ random variables 
in the respect that in the i.i.d.\ case the constant $V$ in Lemma \ref{bernoulli} can be given explicitly, 
whereas in our case the constant depends on some inherent properties of the dynamical system.
\medskip

 The second difficulty is that in order to prove optimal results we 
 use some inclusions in the proof of Theorem \ref{Thm: Sn* reg var} as well as in the proof of Lemma \ref{bernoulli}, i.e.\
 we consider in fact a more complicated term than \eqref{eq: mu(Tn -ETn) intro}.
 The method is mentioned as \ref{en: trunc A} and \ref{en: trunc B} on p.\ \pageref{en: trunc A}
 for the proof of Theorem \ref{Thm: Sn* reg var} and as \ref{en: large dev a} and \ref{en: large dev b} on p.\ \pageref{en: large dev a} 
 for the proof of Lemma \ref{bernoulli}.
 This enables us to use the Borel-Cantelli lemma for an exponential subsequence leading to stronger results.

 The method used in \cite{haeusler_laws_1987} is slightly shorter, however it requires $\left(f_n\right)$ to be additionally monotonic.
 With this result it is only possible to prove trimming statements with $\left(b_n\right)$ being asymptotic to an increasing sequence.
 Sequences $\left(b_n\right)$ as considered in Remark \ref{Rem: log psi log n} are not covered in their results,
 since following the proof of Theorem \ref{Sb(n)} gives $b_n\sim n\cdot \mu\left(\chi>f_n\right)$. Thus,
 $f_n$ being monotonic does not allow for large oscillations of $b_n$.

\subsection{Structure of the paper}
We will give some preliminary statements in Section \ref{sec: Intro statements}, 
i.e.\ we will give the definition of a spectral gap and show the resulting decay of correlation in Section \ref{subsec: spec gap corr decay}.
In Section \ref{subsec: analytic perturbation} we give an introduction 
into analytic perturbation theory for the operator $\widehat{ T}_z$ defined as
$\widehat{ T}_zh\coloneqq\upperleft{\varphi}{\widehat}{ T_z}h\coloneqq\widehat{ T}\left(\mathrm{e}^{z\cdot \varphi}\cdot h\right)$.
We will improve two classical results by showing that they also hold uniformly. 
In Section \ref{subsec: reg var intro} we will recall some classical results in the theory of regular variation 
which enables us later to prove Theorem \ref{Sb(n)}. 

With this background we are able to prove the main steps explicated in the previous section. 
In Section \ref{subsec: exp ineq} we prove the exponential inequality Lemma \ref{lemma: Tnfn chi deviation allg}.
Using this inequality we can prove the statements concerning Properties $\bm{A}$ and $\bm{B}$, i.e.\ 
in Section \ref{proofs transfer A} we prove Theorems \ref{Thm: Sn* allg} and \ref{Thm: Sn* reg var}
and in Section \ref{proofs transfer C} we prove Lemma \ref{bernoulli}.
Finally, in Section \ref{proofs transfer prelim} we prove our main theorems, i.e.\ we prove 
the necessary properties to apply Lemma \ref{lem: Prop A B to trimming}
and show then how the trimming statements in Sections \ref{subsec: gen results} and \ref{subsec: reg var results}
follow from this lemma.
In the last Section \ref{subsec: proof ex} we prove that our example from Section \ref{Examples}
actually fulfills all required properties.

\section{Preliminary statements}\label{sec: Intro statements}
\subsection{Spectral gap and decay of correlation}\label{subsec: spec gap corr decay}
In this section we will first give the precise definition of a spectral gap.
Then we state some properties for dynamical systems fulfilling Property $\mathfrak{C}$, for instance a decay of correlations.

The results of this section are known, but scattered around the literature and often only proven for one particular system
why we decided to reformulate them in the general setting.

In the following we denote the spectral radius of an operator $U$ by $\rho\left(U\right)$.
The following definition is a key part of Property $\mathfrak{C}$ for $\widehat{T}$, see Definition \ref{def: prop C}.
\begin{Def}[Spectral gap]\label{def spec gap}
Suppose $\mathcal{F}$ is a Banach space and $U:\mathcal{F}\to\mathcal{F}$ a bounded linear operator. We say that $U$ has a spectral gap if there exists a decomposition $U=\lambda P+N$ with $\lambda\in\mathbb{C}$ and $P,N$ bounded linear operators such that 
\begin{itemize}
\item $P$ is a projection, i.e.\ $P^2=P$ and $\dim\left(\Image\left(P\right)\right)=1$,
\item $N$ is such that $\rho\left(N\right)<\left|\lambda\right|$,
\item $P$ and $N$ are orthogonal, i.e.\ $PN=NP=0$.
\end{itemize}
\end{Def}

Dealing with operators $U:\mathcal{F}\to \mathcal{F}$ we use the operator norm
$\left\|U\right\|\coloneqq\sup_{\left\|\varphi\right\|\leq 1} \left\|U\varphi\right\|$. 

\begin{lemma}\label{spec gap}
Let $\left(\Omega, \mathcal{A},  T, \mu,\mathcal{F},\left\|\cdot\right\|\right)$ fulfill Property $\mathfrak{C}$.
Then $\widehat{ T}$ has a simple eigenvalue $\lambda=\rho\left(\widehat{ T}\right)=1$. This eigenvalue is unique on the unit circle and has maximal modulus.
\end{lemma}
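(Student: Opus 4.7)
The plan is to combine the spectral gap decomposition $\widehat{T}=\lambda P+N$ from Property $\mathfrak{C}$ with the $T$-invariance of $\mu$ and the $\mathcal{L}^{1}$-contractivity of $\widehat{T}$.

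First I would verify that $1$ is an eigenvalue of $\widehat{T}$: since $\mathcal{F}$ contains the constants, applying \eqref{hat CYRI} with $f=1$ and arbitrary $g\in\mathcal{L}^{\infty}$ yields
\[
\int\widehat{T}1\cdot g\,\mathrm{d}\mu=\int g\circ T\,\mathrm{d}\mu=\int g\,\mathrm{d}\mu=\int 1\cdot g\,\mathrm{d}\mu,
\]
where the second equality uses the $T$-invariance of $\mu$; consequently $\widehat{T}1=1$ $\mu$-a.e.

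Next I would extract the standard consequences of the spectral gap. Using $P^{2}=P$ and $PN=NP=0$, induction yields $\widehat{T}^{n}=\lambda^{n}P+N^{n}$, hence $\rho(\widehat{T})=\max(|\lambda|,\rho(N))=|\lambda|$. Any nonzero $f_{0}\in\Image(P)$ satisfies $Nf_{0}=NPf_{0}=0$, so $\widehat{T}f_{0}=\lambda f_{0}$ and $\lambda$ is an eigenvalue. If $\beta$ is any eigenvalue with eigenfunction $g$ and $|\beta|=|\lambda|$, then decomposing $g=Pg+(I-P)g$, applying $P$ to $\widehat{T}g=\beta g$, and using $PN=0$ gives $(\lambda-\beta)Pg=0$; hence either $\beta=\lambda$, or $Pg=0$, which forces $g$ to be an eigenfunction of $N$ with $|\beta|>\rho(N)$, a contradiction. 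Thus $\lambda$ is the only eigenvalue on the circle of radius $|\lambda|$; the same computation applied to $\widehat{T}g=\lambda g$ shows that $(I-P)g$ is a $\lambda$-eigenvector of $N$, hence zero, so the $\lambda$-eigenspace equals the one-dimensional $\Image(P)$ and $\lambda$ is simple.

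Finally I would pin down $|\lambda|=1$. The fact that $1$ is an eigenvalue together with the bound $|\beta|\leq\rho(\widehat{T})=|\lambda|$ valid for every eigenvalue $\beta$ yields $|\lambda|\geq 1$. For the reverse inequality I would invoke the classical $\mathcal{L}^{1}$-contractivity of $\widehat{T}$, obtained by taking $g=\mathrm{sgn}(\widehat{T}f)\in\mathcal{L}^{\infty}$ in \eqref{hat CYRI}, which gives $\left|\widehat{T}f\right|_{1}\leq\left|f\right|_{1}$ for every $f\in\mathcal{L}^{1}$. Since $\|\cdot\|\geq|\cdot|_{\infty}$ embeds $\mathcal{F}$ into $\mathcal{L}^{\infty}\subset\mathcal{L}^{1}$ on the probability space, any nonzero $f_{0}\in\mathcal{F}$ with $\widehat{T}f_{0}=\lambda f_{0}$ has $\left|f_{0}\right|_{1}>0$, and iterating gives $|\lambda|^{n}\left|f_{0}\right|_{1}=\left|\widehat{T}^{n}f_{0}\right|_{1}\leq\left|f_{0}\right|_{1}$, forcing $|\lambda|\leq 1$. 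The uniqueness on the top circle then forces $\lambda=1$. I do not foresee a real obstacle; the argument is essentially spectral-theoretic bookkeeping between $\|\cdot\|$ and $|\cdot|_{1}$, and the mixing hypothesis of Property $\mathfrak{C}$ plays no role here but will come into play later for the quantitative decay of correlations.
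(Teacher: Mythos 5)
Your proof is correct, but it takes a genuinely different route from the paper's. You work entirely from the assumed decomposition $\widehat{T}=\lambda P+N$: orthogonality gives $\widehat{T}^{n}=\lambda^{n}P+N^{n}$, the range of $P$ consists of $\lambda$-eigenvectors, any eigenvalue other than $\lambda$ is an eigenvalue of $N$ and hence has modulus at most $\rho(N)<\left|\lambda\right|$, and $\left|\lambda\right|=1$ follows by playing the eigenvalue $1$ (from $\widehat{T}1=1$, which uses only $T$-invariance) against the $\mathcal{L}^{1}$-contractivity of the transfer operator. The paper argues dynamically instead: the $\mathcal{L}^{1}$-contraction bounds all eigenvalue moduli by $1$, ergodicity shows that any eigenfunction for the eigenvalue $1$ is constant a.e.\ (whence simplicity), and mixing excludes any other eigenvalue on the unit circle; the gap decomposition itself is hardly used there. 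Your version buys the observation that, once the spectral gap is hypothesized, the conclusion needs no mixing at all (only invariance), and it identifies $\Image(P)$ with the constants as a byproduct, which is exactly what Lemma \ref{Pf=intf} extracts later; the paper's version buys the explicit dynamical mechanism and the constancy of eigenfunctions for the eigenvalue $1$ independently of the gap hypothesis. One small caveat in your write-up: the inequality $\left\|\cdot\right\|\geq\left|\cdot\right|_{\infty}$ only gives boundedness of the inclusion of $\mathcal{F}$ into $\mathcal{L}^{\infty}$; the claim that a nonzero eigenfunction $f_{0}\in\mathcal{F}$ satisfies $\left|f_{0}\right|_{1}>0$ rests on $\mathcal{F}$ consisting of ($\mu$-equivalence classes of) measurable functions, the same tacit identification the paper makes, so it is harmless but should be justified that way rather than by the norm inequality.
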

\begin{proof}
Using the defining relation of $\widehat{T}$ in \eqref{hat CYRI} gives that its operator norm  with respect to the $\mathcal{L}_{\mu}^{1}$-norm is equal to $1$. This immediately implies that the modulus of any eigenvalue cannot exceed $1$. 
Now, if $\lambda$ is an eigenvalue of the eigenfunction $f$, then we have for every $A\in\mathcal{B}$ and $n\in\mathbb{N}$ that
\begin{align}
\int_{ T^{-n}\left(A\right) }f\mathrm{d}\mu&=\int f\cdot \mathbbm{1}_{ T^{-n}\left(A\right)}\mathrm{d}\mu
=\int f\cdot \mathbbm{1}_A\circ  T^n\mathrm{d}\mu
=\int \widehat{ T}^nf\cdot  \mathbbm{1}_A\mathrm{d}\mu=\lambda^n\int_{A}f\mathrm{d}\mu.\label{eq: int T^-n A}
\end{align}
If we assume that $\lambda=1$, then for all $a\in \mathbb{R}$ and   $n\in\mathbb{N}$, we have
\begin{align*}
\int_{ T^{-n}\left(\left\{f>a\right\}\right)}f\mathrm{d}\mu=\int_{\left\{f>a\right\}}f\mathrm{d}\mu.
\end{align*}
Since $T$ is ergodic and  preserves the probability measure $\mu$ it follows that $\mu (\left\{f>a\right\})\in \{0,1\}$, for all $a\in \mathbb{R}$, and consequently  $f$ has to be constant almost everywhere. Since $1$ is the eigenvalue only for the constant functions, 
this eigenvalue has to be simple. 
Furthermore, since $\mu$ is mixing we have for all $f\in\mathcal{L}^2$ and thus $f\in\mathcal{F}$ and all $A\in\mathcal{B}$ that
\begin{align*}
 \lim_{n\to\infty}\int f\cdot \mathbbm{1}_A\circ  T^n\mathrm{d}\mu
 =\int f\mathrm{d}\mu\cdot\mu\left(A\right).
\end{align*}
If we assume that $f$ is an eigenfunction to the eigenvalue $\lambda$,
\eqref{eq: int T^-n A} implies
\begin{align*}
 \lim_{n\to\infty}\lambda^n\int_{A}f\mathrm{d}\mu=\int f\mathrm{d}\mu\cdot\mu\left(A\right).
\end{align*}
In case that $\lambda$ lies on the unit circle the only possibility that this equality holds is that $\lambda=1$.
\end{proof}

\begin{lemma}\label{Pf=intf}
Let $\left(\Omega, \mathcal{A},  T, \mu,\mathcal{F},\left\|\cdot\right\|\right)$ fulfill Property $\mathfrak{C}$ and write $\widehat{ T}=\lambda P+N$ with $\lambda, P, N$ as in Definition \ref{def spec gap}.
Then $Pf=\int f\mathrm{d}\mu$ holds for all $f\in\mathcal{F}$.
\end{lemma}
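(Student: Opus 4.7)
The plan is to identify $Pf$ as a constant function whose value is $\int f \, d\mu$. The argument has two parts: first locating the range of $P$, and then pinning down the constant by $T$-invariance of $\mu$.

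First I would observe that, by Lemma \ref{spec gap}, $\lambda = \rho(\widehat{T}) = 1$, so the decomposition from Definition \ref{def spec gap} reads $\widehat{T} = P + N$. Since $PN = NP = 0$ and $\widehat{T}P = P$, the one-dimensional image of $P$ is contained in the eigenspace of $\widehat{T}$ for the eigenvalue $1$, which by Lemma \ref{spec gap} consists precisely of the constant functions (they lie in $\mathcal{F}$ by assumption). Hence for every $f \in \mathcal{F}$ there exists a scalar $c(f)$ with $Pf = c(f) \cdot \mathbf{1}$.

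Next, a simple induction using $P^2 = P$ and $PN = NP = 0$ gives $\widehat{T}^n = P + N^n$ for all $n \in \mathbb{N}$. Because $\rho(N) < 1$, Gelfand's formula yields $\|N^n\| \to 0$, so $\widehat{T}^n f \to Pf$ in the norm $\|\cdot\|$. By \eqref{ineq <l} this implies uniform, hence $\mathcal{L}^1(\mu)$, convergence. Now applying \eqref{hat CYRI} with $g = \mathbf{1}$ (and iterating, or using $T$-invariance of $\mu$ directly) gives $\int \widehat{T}^n f \, d\mu = \int f \, d\mu$ for every $n$. Passing to the limit produces $\int Pf \, d\mu = \int f \, d\mu$, and since $Pf \equiv c(f)$, we conclude $c(f) = \int f \, d\mu$, which is the desired identity.

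I do not anticipate a real obstacle; the only point requiring mild care is justifying the exchange of limit and integral, which is immediate from $\|\cdot\| \geq |\cdot|_\infty$ together with $\mu$ being a probability measure. Everything else is bookkeeping with the spectral decomposition and the already-established fact that constants are the only eigenfunctions of $\widehat{T}$ for the eigenvalue $1$.
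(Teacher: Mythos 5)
Your proof is correct, but it takes a genuinely different route from the paper's. Both arguments start the same way: since $\widehat{T}P=P$ (using $\lambda=1$ from Lemma \ref{spec gap}) and the eigenspace for the eigenvalue $1$ consists exactly of the constants, $Pf$ is a constant function. From there the paper represents the resulting functional via Riesz' representation theorem as $Pf=\int f\cdot g\,\mathrm{d}\mu$ with $g\in\mathcal{L}^{\infty}$, uses $P\widehat{T}=P$ together with the duality \eqref{hat CYRI} to get $\int f\cdot g\,\mathrm{d}\mu=\int f\cdot\left(g\circ T\right)\mathrm{d}\mu$ for all $f$, and concludes $g=g\circ T$, hence $g$ constant, by ergodicity. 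You instead exploit the dynamics of the decomposition: $\widehat{T}^{n}=P+N^{n}$ (valid because $PN=NP=0$), $\left\|N^{n}\right\|\to 0$ by Gelfand's formula since $\rho\left(N\right)<1$, so $\widehat{T}^{n}f\to Pf$ in norm and hence uniformly by \eqref{ineq <l}, while $\int\widehat{T}^{n}f\,\mathrm{d}\mu=\int f\,\mathrm{d}\mu$ for every $n$ (take $g\equiv 1$ in \eqref{hat CYRI}); passing to the limit pins the constant down as $\int f\,\mathrm{d}\mu$. Your route buys some robustness: it avoids the Riesz representation step, which in the paper tacitly requires the functional $f\mapsto Pf$ to be controlled in $\mathcal{L}^{1}$ (resp.\ $\mathcal{F}$ to be sufficiently rich in $\mathcal{L}^{1}$) in order to conclude $g=g\circ T$, and it needs no separate appeal to ergodicity beyond what Lemma \ref{spec gap} already delivers; the paper's route, in exchange, identifies the projection without invoking convergence of the iterates $\widehat{T}^{n}$. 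Both are valid; yours is the more self-contained of the two.
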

\begin{proof}
From the proof of Lemma \ref{spec gap} we know that the constant functions are eigenfunctions of $\widehat{ T}$ to the maximal eigenvalue $\lambda=1$. 
Thus, $Pf$ is almost everywhere constant for all $f\in\mathcal{F}$. By Riesz' representation theorem we know that there exists $g\in \mathcal{L}^{\infty}$ such that for all $f\in\mathcal{F}$ we have that $Pf=\int f\cdot g\mathrm{d}\mu$. Furthermore,
\begin{align*}
\int f\cdot g\mathrm{d}\mu=Pf=P\left(\widehat{ T}f\right)=\int\widehat{ T}f\cdot g\mathrm{d}\mu=\int f\cdot g\circ  T\mathrm{d}\mu.
\end{align*}
Since $\mu$ is ergodic, $g=g\circ  T$ and thus, $g$ is constant. 
\end{proof}

With the above lemmas we can prove that a decay of correlation holds under Property $\mathfrak{C}$.
\begin{lemma}[Decay of correlations]\label{decay corr}
Let $\left(\Omega, \mathcal{A},  T, \mu,\mathcal{F},\left\|\cdot\right\|\right)$ fulfill Property $\mathfrak{C}$.
Further, let $\varphi_1\in \mathcal{L}^1$ and $\varphi_2\in\mathcal{F}$ and define 
\begin{align*}
\overline{\tau}\coloneqq\sup\left\{\left|z\right|\colon \left|z\right|<1, z\in\spec\left(\widehat{ T}\right)\right\}. 
\end{align*}
Then there exists for every $\tau>\overline{\tau}$ a constant $R>0$ such that 
\begin{align*}
\cor_{\varphi_1,\varphi_2}\left(n\right)&\coloneqq\left|\int \left(\varphi_1\circ  T^n\right)\cdot \varphi_2\mathrm{d}\mu-\int\varphi_1\mathrm{d}\mu\cdot\int\varphi_2\mathrm{d}\mu\right|
\leq  R \cdot\left|\varphi_1\right|_1\cdot \left\|\varphi_2\right\|\cdot \tau^n,
\end{align*}
for all $n\in\mathbb{N}$. 
\end{lemma}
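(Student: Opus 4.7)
The plan is to use the spectral gap decomposition $\widehat{T}=\lambda P+N$ guaranteed by Property $\mathfrak{C}$ and identified by Lemmas \ref{spec gap} and \ref{Pf=intf}. From these lemmas we know that $\lambda=1$, that $Pf=\int f\,\mathrm{d}\mu$ for every $f\in\mathcal{F}$, and that $\rho(N)<1$. Because $P^2=P$ and $PN=NP=0$ we have the clean iterate
\[
\widehat{T}^n = P + N^n, \qquad n\in\mathbb{N}.
\]

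Next, I would rewrite the correlation using the adjoint relation \eqref{hat CYRI}. Applying it $n$ times (which is legitimate since $\varphi_2\in\mathcal{F}\subseteq\mathcal{L}^\infty$ by \eqref{ineq <l} and $\widehat{T}$ preserves $\mathcal{F}$ by \eqref{C 0 f}), we obtain
\[
\int (\varphi_1\circ T^n)\cdot\varphi_2\,\mathrm{d}\mu=\int \varphi_1\cdot\widehat{T}^n\varphi_2\,\mathrm{d}\mu=\int\varphi_1\,\mathrm{d}\mu\cdot\int\varphi_2\,\mathrm{d}\mu+\int\varphi_1\cdot N^n\varphi_2\,\mathrm{d}\mu,
\]
where in the last step I used $P\varphi_2=\int\varphi_2\,\mathrm{d}\mu$. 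Hence
\[
\cor_{\varphi_1,\varphi_2}(n)=\Bigl|\int\varphi_1\cdot N^n\varphi_2\,\mathrm{d}\mu\Bigr|\le |\varphi_1|_1\cdot|N^n\varphi_2|_\infty\le |\varphi_1|_1\cdot\|N^n\|\cdot\|\varphi_2\|,
\]
by H\"older's inequality and the norm comparison \eqref{ineq <l}.

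Finally I would control $\|N^n\|$ with Gelfand's spectral radius formula. Since $\widehat{T}=P+N$ with $PN=NP=0$ and $P$ a one-dimensional projection, the spectrum of $\widehat{T}$ inside the open unit disc is exactly $\spec(N)$, so $\rho(N)=\overline{\tau}$. Hence $\lim_{n\to\infty}\|N^n\|^{1/n}=\overline{\tau}$, and for any $\tau>\overline{\tau}$ there is $R>0$ with $\|N^n\|\le R\tau^n$ for all $n$. Combining the two estimates yields the desired bound $\cor_{\varphi_1,\varphi_2}(n)\le R\cdot|\varphi_1|_1\cdot\|\varphi_2\|\cdot\tau^n$.

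The only mildly delicate point is the justification of the identity $\int(\varphi_1\circ T^n)\varphi_2\,\mathrm{d}\mu=\int\varphi_1\cdot\widehat{T}^n\varphi_2\,\mathrm{d}\mu$ when $\varphi_1$ is only in $\mathcal{L}^1$; one handles this by noting that $\widehat{T}^n\varphi_2\in\mathcal{F}\subseteq\mathcal{L}^\infty$, so by the duality between Koopman and transfer operators the identity \eqref{hat CYRI} extends to this pairing. After that, everything is a straightforward two-line estimate.
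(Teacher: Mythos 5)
Your proposal is correct and follows essentially the same route as the paper: both reduce the correlation to $\bigl|\int\varphi_1\cdot N^n\varphi_2\,\mathrm{d}\mu\bigr|$ via the duality \eqref{hat CYRI} and the identification $P\varphi_2=\int\varphi_2\,\mathrm{d}\mu$ from Lemmas \ref{spec gap} and \ref{Pf=intf}, and then bound $\left\|N^n\right\|$ (the paper writes it as $\widehat{T}^n\left(\id-P\right)$, which is the same operator) by Gelfand's formula together with $\left|\cdot\right|_\infty\leq\left\|\cdot\right\|$. Your extra remark on extending \eqref{hat CYRI} to $\varphi_1\in\mathcal{L}^1$ paired with $\widehat{T}^n\varphi_2\in\mathcal{L}^\infty$ is a point the paper passes over silently, and is fine.
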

The proof of this lemma follows the proof of \cite[Theorem 1.6]{baladi_positive_2000} 
which is given for piecewise expanding interval maps.
\begin{proof}
By the properties of the transfer operator we have that 
\begin{align*}
\cor_{\varphi_1,\varphi_2}\left(n\right)&\coloneqq\left|\int \left(\varphi_1\circ  T^n\right)\cdot \varphi_2\mathrm{d}\mu-\int\varphi_1\mathrm{d}\mu\cdot\int\varphi_2\mathrm{d}\mu\right|\\
&= \left|\int \varphi_1\cdot \widehat{ T}^n\varphi_2\mathrm{d}\mu-\int\varphi_1\mathrm{d}\mu\cdot\int\varphi_2\mathrm{d}\mu\right|\\
&= \left|\int \varphi_1\cdot \left(\widehat{ T}^n\varphi_2-\int\varphi_2\mathrm{d}\mu\right)\mathrm{d}\mu\right|\\
&\leq \int \left|\varphi_1\right|\mathrm{d}\mu\cdot \left|\widehat{ T}^n\left(\varphi_2-\int\varphi_2\mathrm{d}\mu\right)\right|_{\infty}.
\end{align*}
It follows from Lemmas \ref{spec gap} and \ref{Pf=intf} that $\spec\big(\widehat{ T}\big)$ consists of the simple eigenvalue $1$ with the corresponding spectral projection $P f=\int f\mathrm{d}\mu$ and a subset of a disc with radius $\overline{\tau}<1$.
Hence, 
\begin{align*}
\rho\left(\widehat{ T}\left(\id-P\right)\right)=\overline{\tau}
\end{align*}
and it follows from  Gelfand's formula, see for example \cite[p.\ 195]{lax_functional_2002}, and $\left|\cdot\right|_{\infty}\leq \left\|\cdot\right\|$ that for all $\tau>\overline{\tau}$ there exists $ R>0$ such that for all $n\in\mathbb{N}$
\begin{align*}
\left|\widehat{ T}^n\varphi_2-\int\varphi_2\mathrm{d}\mu\right|_{\infty}&\leq \left\|\widehat{ T}^n\varphi_2-\int\varphi_2\mathrm{d}\mu\right\|
\leq  R\cdot \tau^n\cdot \left\|\varphi_2\right\| 
\end{align*}
and thus, the statement of the lemma follows.
\end{proof}

\subsection{Analytic perturbation of the transfer operator}\label{subsec: analytic perturbation}
We consider in the following the perturbed transfer operator given by
\begin{align}
\widehat{ T}_zh\coloneqq\upperleft{\varphi}{\widehat}{ T_z}h\coloneqq\widehat{ T}\left(\mathrm{e}^{z\cdot \varphi}\cdot h\right).\label{eq: def twisted transfer}
\end{align}
Obviously, $\widehat{ T}=\widehat{ T}_0$.
We remember from the previous section that under the restriction that condition $\mathfrak{C}$ holds 
we have that $\widehat{ T}= P+N\coloneqq P_0+N_0$ 
with $\rho(N_0)<1$.

Our main goal in this section is to prove the following two main lemmas, where the first reads as follows.
\begin{lemma}\label{spect sep analyt}
Let $\left(\Omega, \mathcal{A},  T, \mu,\mathcal{F},\left\|\cdot\right\|\right)$ fulfill Property $\mathfrak{C}$.
For all $\theta\in\left(0,(1-\rho(N_0))/2\right)$ there exists $\varkappa>0$ such that for all
$\varphi\in\mathcal{F}$ with $\int \varphi\mathrm{d}\mu=0$ and  $\left|z\right|\cdot \left\|\varphi\right\| < \varkappa$ we have 
\begin{align}
\upperleft{\varphi}{\widehat}{ T_z} &=\upperleft{\varphi}{}{\lambda_z} \upperleft{\varphi}{}{P_z} + \upperleft{\varphi}{}{N_z}
\eqqcolon\lambda_z P_z + N_z,\label{spect sep form}
\end{align}
where 
\begin{align}
P_z^2 = P_z,\text{ }\dim \Image\left(P_z\right) = 1,\text{ } N_z P_z = P_z N_z = 0,\text{ }
\left|\lambda_z\right|\geq  1-\theta,\text{ and }\rho(N_z)\leq \rho(N_0)+\theta,\label{spec gap prop z}
\end{align}
and
$T_z$, $\lambda_z$, $P_z$, and $N_z$ are analytic with the definition of analyticity for operators given in Definition \ref{def: weak strong analyt}.
\end{lemma}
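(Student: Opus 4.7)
The plan is to view $\widehat T_z$ as a norm-analytic perturbation of $\widehat T_0=\widehat T$ and to extract all spectral data via a single Dunford contour integral around the eigenvalue $1$, taking care that every estimate depends on $\varphi$ only through the product $|z|\cdot\|\varphi\|$. This is what will force the threshold $\varkappa$ to be uniform in $\varphi$, which is the feature that distinguishes this lemma from the textbook Kato statement (where the unperturbed operator and the perturbing direction are both fixed in advance).

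First I would quantify the perturbation. Expanding $\mathrm{e}^{z\varphi}-1=\sum_{k\geq 1}(z\varphi)^k/k!$ and invoking the Banach-algebra property of $\mathcal{F}$ together with \eqref{C 0 f} gives
\begin{align*}
\|\widehat T_z-\widehat T_0\|\leq K_0\left(\mathrm{e}^{|z|\cdot\|\varphi\|}-1\right),
\end{align*}
so $z\mapsto\widehat T_z$ is norm-analytic and its distance from $\widehat T_0$ is governed entirely by $|z|\cdot\|\varphi\|$. Next I would fix a contour. Since $\widehat T_0=P_0+N_0$ has orthogonal decomposition with $\rho(N_0)<1$ and $1$ a simple eigenvalue (Lemmas \ref{spec gap} and \ref{Pf=intf}), for any $\theta\in(0,(1-\rho(N_0))/2)$ the circle $\Gamma_\theta\coloneqq\{w\in\mathbb{C}\colon|w-1|=\theta\}$ lies in the resolvent set of $\widehat T_0$ and admits the explicit representation $(w-\widehat T_0)^{-1}=(w-1)^{-1}P_0+(w-N_0)^{-1}(\id-P_0)$, giving a $\varphi$-free bound $M_\theta\coloneqq\sup_{w\in\Gamma_\theta}\|(w-\widehat T_0)^{-1}\|<\infty$. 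A Neumann series then produces $(w-\widehat T_z)^{-1}$ uniformly on $\Gamma_\theta$ as soon as $K_0 M_\theta(\mathrm{e}^{|z|\cdot\|\varphi\|}-1)<1$, and this inequality singles out the threshold $\varkappa$.

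The spectral objects are then read off from the Dunford integrals
\begin{align*}
P_z\coloneqq\frac{1}{2\pi i}\oint_{\Gamma_\theta}(w-\widehat T_z)^{-1}\,\mathrm{d}w,\qquad N_z\coloneqq\widehat T_z(\id-P_z).
\end{align*}
Standard arguments give $P_z^2=P_z$, $P_zN_z=N_zP_z=0$, and analyticity of $z\mapsto P_z,N_z$; norm-continuity from $z=0$ forces $\dim\Image(P_z)=\dim\Image(P_0)=1$, so $\widehat T_z P_z=\lambda_z P_z$ with $\lambda_z$ the unique point of $\spec(\widehat T_z)$ inside $\Gamma_\theta$. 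Consequently $|\lambda_z-1|\leq\theta$ yields $|\lambda_z|\geq 1-\theta$, while upper semicontinuity of the spectrum together with the choice $\theta<(1-\rho(N_0))/2$ confines $\spec(\widehat T_z)\setminus\{\lambda_z\}$, and therefore $\spec(N_z)\setminus\{0\}$, to the disk of radius $\rho(N_0)+\theta$. Analyticity of $\lambda_z$ then follows from the relation $\lambda_z P_z=\widehat T_z P_z$ upon composing with any fixed continuous functional that does not vanish on $\Image(P_0)$.

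The main obstacle is precisely the uniformity in $\varphi$: the usual analytic perturbation statements fix a perturbation direction $\varphi$ and let $z$ vary, whereas here $\varphi$ ranges freely over $\mathcal{F}$. The resolution is the observation highlighted above that the entire $\varphi$-dependence of the perturbation bound enters through the single factor $\|\varphi\|$, while the resolvent bound $M_\theta$ is an intrinsic quantity of $\widehat T_0$ and the chosen radius $\theta$. Consequently $\varkappa$ can be selected to depend only on $K_0$, $M_\theta$ and $\theta$, which is exactly the form of uniformity needed for the downstream estimates in Lemmas \ref{lemma: resolvent} and \ref{e Snphi}.
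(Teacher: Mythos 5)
Your proposal is correct, and it reaches the conclusion by a more hands-on route than the paper. The paper first converts the estimate $\|\widehat{T}_z-\widehat{T}_0\|\leq K_0(\exp(|z|\cdot\|\varphi\|)-1)$ into a bound on the gap metric $\widehat{d}$ and then quotes Kato's separation theorem (Theorem IV-3.16, with two circles centred at the origin of radii $\rho(N_0)+\theta$ and $1-\theta$) to obtain the decomposition, followed by Kato's Theorems VII-1.7/1.8, whose explicit smallness conditions (with $a=K_0(\exp(|z|\cdot\|\varphi\|)-1)$ and $b=0$) deliver both analyticity and the uniform radius $\varkappa$. You instead construct everything directly: a single contour $\Gamma_\theta$ around $1$, a $\varphi$-free resolvent bound $M_\theta$ for $\widehat{T}_0$, a Neumann series for $(w-\widehat{T}_z)^{-1}$, the Riesz projection $P_z$ as a Dunford integral, rank stability from $\|P_z-P_0\|<1$, and $\lambda_z$ via a fixed functional. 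Both arguments hinge on exactly the same uniformity mechanism -- the perturbation enters only through $|z|\cdot\|\varphi\|$, while all resolvent constants are intrinsic to $\widehat{T}_0$ and $\theta$ -- so your $\varkappa$ has the same quality of dependence as the paper's; what your route buys is self-containedness and an explicit $\varkappa$ in terms of $K_0$ and $M_\theta$, at the cost of redoing standard Riesz-projection bookkeeping that the paper delegates to Kato.

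One step you state rather tersely is the confinement of $\spec(\widehat{T}_z)\setminus\{\lambda_z\}$, hence of $\spec(N_z)\setminus\{0\}$, to the disk of radius $\rho(N_0)+\theta$: to keep the uniformity in $\varphi$ you should make the upper-semicontinuity appeal quantitative, e.g.\ take a second constant $M_\theta'\coloneqq\sup\bigl\{\|(w-\widehat{T}_0)^{-1}\|\colon \rho(N_0)+\theta\leq|w|\leq K_0\mathrm{e}^{\varkappa}+1,\ |w-1|\geq\theta\bigr\}$, which is again independent of $\varphi$, and run the same Neumann-series exclusion on this compact region (points of larger modulus are excluded since $\|\widehat{T}_z\|\leq K_0\mathrm{e}^{|z|\cdot\|\varphi\|}$). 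With $\varkappa$ chosen so that $K_0(\mathrm{e}^{\varkappa}-1)<\min\{M_\theta^{-1},(M_\theta')^{-1}\}$ this closes the gap, and the condition $\theta<(1-\rho(N_0))/2$ guarantees the two spectral regions are disjoint, exactly as in your setup.
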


The fact that for a given function $\varphi$ the operator $\upperleft{\varphi}{\widehat}{ T_z}$ is analytic and has a
decomposition $\upperleft{\varphi}{}{\lambda_z}\cdot \upperleft{\varphi}{}{P_z}+\upperleft{\varphi}{}{N_z}$, where 
$\upperleft{\varphi}{\widehat}{T_z}$, $\upperleft{\varphi}{}{\lambda_z}$, $\upperleft{\varphi}{}{P_z}$, and $\upperleft{\varphi}{}{N_z}$ 
are analytic in a neighborhood of zero is widely known, see for example \cite[Proposition 2.3]{gouezel_limit_2015},
\cite[Theorem 8.2]{fan_spectral_2003},
\cite[Chapter 3]{sarig_introduction_2012}.

However, those results say nothing about a uniform bound of the analytic convergence radius 
if one considers the perturbed operator $\upperleft{\varphi}{\widehat}{ T_z}$ for different functions $\varphi$. 
Furthermore, to prove Lemma \ref{lemma: resolvent} we will see that the additional uniform bounds 
$\left|\lambda_z\right|\geq  1-\theta$ and $\rho(N_z)\leq \rho(N_0)+\theta$ instead of just assuming $\rho(N_z)<\lambda_z$ are necessary.

When we speak here about analyticity of operators we usually refer to strong analyticity given in the following definition.
\begin{Def}[Weak and strong analyticity]\label{def: weak strong analyt}
Let $\mathcal{G}$ be a Banach space and $B\coloneqq B\left(\mathcal{G}\right)$ be the space of all bounded linear operators.
Let $U\subset\mathbb{C}$ be open and $\left(L_z\right)_{z\in U}$ a one parameter family of operators in $B$.

We call $\left(L_z\right)$ weakly analytic if $\varphi\left(L_z\right)$ is homomorphic on $U$ for every bounded linear $\varphi: B\to\mathbb{C}$.

We call $\left(L_z\right)$ strongly analytic if for every $z\in U$ there exists $L'_z\in U$ such that 
\begin{align*}
\lim_{h\to 0}\left\|\frac{L_{z+h}-L_z}{h}-L'_z\right\|=0. 
\end{align*}
\end{Def}
However, from  \cite[Theorem III-3.12]{kato_perturbation_1995} we know that in fact
weak analyticity and strong analyticity are equivalent.

Our second main lemma in this section will give a bound on $n$ such that 
$\left|\upperleft{\varphi}{}{\lambda}_z\right|^{-n}\cdot\left\|\upperleft{\varphi}{}{N}_z^n\right\|\leq 1$.
\begin{lemma}\label{lemma: resolvent}
Let $\left(\Omega, \mathcal{A},  T, \mu,\mathcal{F},\left\|\cdot\right\|\right)$ fulfill Property $\mathfrak{C}$ and
let $\upperleft{\varphi}{}{\lambda}_z$, $\upperleft{\varphi}{}{N}_z$, and $\varkappa$ be given as in Lemma \ref{spect sep analyt}.
Then there exists $N\in\N$ and $\varkappa'\in\left(0, \varkappa\right)$ such that
for all $\varphi\in\mathcal{F}$ fulfilling $\int\varphi\mathrm{d}\mu=0$,
all $z\in\mathbb{C}$ with $\left|z\right|\cdot \left\|\varphi\right\| < \varkappa'$, 
and all $n\in\N_{>N}$
\begin{align}
\left|\upperleft{\varphi}{}{\lambda}_z\right|^{-n}\cdot\left\|\upperleft{\varphi}{}{N}_z^n\right\|
&\leq 1.\label{eq: lambdaz Nz}
\end{align}
\end{lemma}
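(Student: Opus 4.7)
The plan is to bound $\|N_z^n\|$ from above by something of the form $M\cdot r_1^n$ with $r_1<1-\theta$, uniformly in $\varphi$ (satisfying $\int\varphi\,\mathrm{d}\mu=0$) and in $z$ with $|z|\cdot\|\varphi\|<\varkappa'$. Combined with the uniform lower bound $|\lambda_z|\geq 1-\theta$ from Lemma~\ref{spect sep analyt}, this yields
\[
|\lambda_z|^{-n}\|N_z^n\|\;\leq\; M\cdot\left(\frac{r_1}{1-\theta}\right)^n,
\]
which is at most $1$ once $n$ exceeds some $N$ independent of $z$ and $\varphi$.

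First I would fix $\theta$ as in Lemma~\ref{spect sep analyt} and pick $r_1$ with $\rho(N_0)+\theta<r_1<1-\theta$, which is possible since $\theta<(1-\rho(N_0))/2$. Because $\sigma(\widehat{T}_z)\subseteq\{\lambda_z\}\cup\{w\colon|w|\leq\rho(N_0)+\theta\}$, the circle $\Gamma\coloneqq\{w\colon|w|=r_1\}$ separates $\lambda_z$ from the remaining spectrum, uniformly in $z$. Together with $P_z N_z=N_z P_z=0$ this yields via the Dunford--Taylor functional calculus
\[
N_z^n \;=\;\frac{1}{2\pi i}\oint_{\Gamma}w^n\,(w-\widehat{T}_z)^{-1}\,\mathrm{d}w,\qquad n\in\N,
\]
so the problem reduces to bounding $\|(w-\widehat{T}_z)^{-1}\|$ uniformly on $\Gamma$, in $z$ (small) and in $\varphi$.

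Next I would control the perturbation in operator norm. Since $\mathcal{F}$ is a Banach algebra with $\|\cdot\|\geq|\cdot|_\infty$ and $\widehat{T}$ is bounded by $K_0$ (condition \eqref{C 0 f}), for every $h\in\mathcal{F}$
\[
\bigl\|(\widehat{T}_z-\widehat{T}_0)h\bigr\|=\bigl\|\widehat{T}\bigl((\mathrm{e}^{z\varphi}-1)h\bigr)\bigr\|
\leq K_0\,\bigl\|\mathrm{e}^{z\varphi}-1\bigr\|\,\|h\|
\leq K_0\bigl(\mathrm{e}^{|z|\cdot\|\varphi\|}-1\bigr)\|h\|,
\]
where I used $\|\mathrm{e}^{z\varphi}-1\|\leq\sum_{k\geq 1}|z|^k\|\varphi\|^k/k!$. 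Hence $\|\widehat{T}_z-\widehat{T}_0\|$ depends on $\varphi$ and $z$ only through the product $|z|\cdot\|\varphi\|$ and tends to $0$ as this product tends to $0$. Since $\Gamma$ is compact and lies in the resolvent set of $\widehat{T}_0$, the quantity $M_0\coloneqq\max_{w\in\Gamma}\|(w-\widehat{T}_0)^{-1}\|$ is finite. Choosing $\varkappa'\in(0,\varkappa)$ small enough that $K_0(\mathrm{e}^{\varkappa'}-1)\cdot M_0\leq 1/2$ and expanding via a Neumann series then gives
\[
\|(w-\widehat{T}_z)^{-1}\|\leq 2M_0\eqqcolon M_1
\]
for all $w\in\Gamma$, all $\varphi$ with $\int\varphi\,\mathrm{d}\mu=0$, and all $z$ with $|z|\cdot\|\varphi\|<\varkappa'$.

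Combining these estimates, $\|N_z^n\|\leq r_1^{n+1}M_1$, and therefore
\[
|\lambda_z|^{-n}\,\|N_z^n\|\;\leq\;M_1 r_1\left(\frac{r_1}{1-\theta}\right)^{n}\!,
\]
which is bounded by $1$ for all $n>N$ with $N$ chosen only in terms of $r_1,\theta,M_1$. The main obstacle is the uniformity requirement: since both $N_z$ and $\lambda_z$ depend on $\varphi$, one cannot just appeal to the pointwise Gelfand formula. The trick is that every bound above involves $\varphi$ and $z$ only through $|z|\cdot\|\varphi\|$ (via the perturbation estimate $\|\widehat{T}_z-\widehat{T}_0\|\leq K_0(\mathrm{e}^{|z|\cdot\|\varphi\|}-1)$) and through the $\varphi$-independent contour $\Gamma$ chosen using the uniform spectral-gap data of Lemma~\ref{spect sep analyt}; this is exactly why the uniform statement $(\star)$ ``$\rho(N_z)\leq\rho(N_0)+\theta$ and $|\lambda_z|\geq 1-\theta$'' was built into Lemma~\ref{spect sep analyt} rather than the weaker $\rho(N_z)<|\lambda_z|$.
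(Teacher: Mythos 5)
Your argument is correct, but it takes a genuinely different route from the paper's. The paper bounds $\left\|N_z^n\right\|$ by following the proof of Gelfand's formula for the operator $N_z$ itself, namely $\left\|N_z^n\right\|\leq c_z\left(\rho\left(N_z\right)+\theta\right)^{n+1}$ with $c_z$ the maximum of $\left\|R\left(\xi,N_z\right)\right\|$ over the circle $\left|\xi\right|=\rho\left(N_0\right)+2\theta$, and then makes $c_z$ uniform in $\varphi$ and $z$ by chaining the $\widehat{d}$-perturbation estimate, Kato's continuity of the spectral decomposition (Lemma \ref{Kato IV 3.16}) and of the resolvent (Lemma \ref{lemma: resolvent cont}), together with a compactness argument over $\xi$ on that circle. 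You never touch the resolvent of $N_z$: you represent $N_z^n$ as a Dunford--Taylor integral of $w^n R\left(w,\widehat{T}_z\right)$ over a fixed contour $\Gamma$ of radius $r_1\in\left(\rho\left(N_0\right)+\theta,1-\theta\right)$, which is legitimate because Lemma \ref{spect sep analyt} places $\lambda_z$ outside and the remaining spectrum inside $\Gamma$ uniformly, while $P_zN_z=N_zP_z=0$ identifies $\mathrm{id}-P_z$ with the Riesz projection for the inner part, so that the integral indeed equals $N_z^n$; the uniform resolvent bound on $\Gamma$ then comes directly from a Neumann series around $\widehat{T}_0$ using the same perturbation estimate $\left\|\widehat{T}_z-\widehat{T}_0\right\|\leq K_0\left(\mathrm{e}^{\left|z\right|\cdot\left\|\varphi\right\|}-1\right)$ that the paper establishes in Lemma \ref{lemma: distance of pertub}. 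What your route buys is explicitness and self-containedness: the admissible $\varkappa'$ is pinned down by the single inequality $K_0\left(\mathrm{e}^{\varkappa'}-1\right)M_0\leq 1/2$, and you avoid both the non-quantitative continuity statements from Kato and the compactness-in-$\xi$ argument the paper needs for uniformity; the paper's route, in exchange, reuses only machinery already introduced for Lemma \ref{spect sep analyt} and requires no holomorphic functional calculus beyond the Gelfand-type estimate it cites elsewhere. Both proofs rest on the same crucial input, the uniform bounds $\left|\lambda_z\right|\geq 1-\theta$ and $\rho\left(N_z\right)\leq\rho\left(N_0\right)+\theta$ from Lemma \ref{spect sep analyt}, as you correctly point out at the end.
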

From Gelfand's formula, see for example \cite[p.\ 195]{lax_functional_2002}, 
and Lemma \ref{spect sep analyt}
it follows easily that
\begin{align*}
\sup_{\left|z\right|<\varkappa/\left\|\varphi\right\|}\rho\left(\upperleft{\varphi}{}{N}_z\right)=\adjustlimits\sup_{\left|z\right|<\varkappa/\left\|\varphi\right\|}\lim_{n\to\infty}\sqrt[n]{\left\|\upperleft{\varphi}{}{N}_z^n\right\|}<\inf_{\left|z\right|<\varkappa/\left\|\varphi\right\|}\left|\upperleft{\varphi}{}{\lambda}_z\right|
\end{align*}
and for fixed $\varphi$ an $N$ fulfilling \eqref{eq: lambdaz Nz} for all $n\geq N$ is easily determined.
However, as we will see in the proof of this lemma at the end of the section, some more attention is needed to obtain a uniform bound.

In what follows we will recall some of the known results and prove with general operator theory the above lemmas.
In the following let $\varphi_z^{\left(k\right)}$ with $z\in U$ and $U$ an open domain in $\mathbb{C}$, denote the $k$-th derivative of operators or $\mathbb{C}$ valued functions $\left(z\mapsto\varphi_{z}\right)_{z\in U}$. 
\begin{lemma}\label{lemma Tz}
Let $\left(\Omega, \mathcal{A},  T, \mu,\mathcal{F},\left\|\cdot\right\|\right)$ fulfill Property $\mathfrak{C}$ and let $\varphi\in\mathcal{F}$. Then the operator $\widehat{ T}_z$ given by \eqref{eq: def twisted transfer}
is analytic on $\mathbb{C}$ and we have 
\begin{align}
\widehat{ T}_z^{\left(k\right)}=\widehat{ T}_zM_{\varphi}^k\label{Tzk}
\end{align}
where $M_{\varphi}:\mathcal{F}\to\mathcal{F}$ and $M_{\varphi}h\coloneqq\varphi h$.
\end{lemma}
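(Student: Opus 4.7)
The plan is to show that $\widehat{T}_{z}$ can be expanded as a norm-convergent operator-valued power series in $z$ with infinite radius of convergence and to read the derivatives off by term-by-term differentiation.

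First I would treat the multiplication operator $M_{z}h\coloneqq\mathrm{e}^{z\varphi}h$ on its own. Since $\mathcal{F}$ is a Banach algebra, we have $\|M_{\varphi}^{k}h\|=\|\varphi^{k}h\|\leq \|\varphi\|^{k}\|h\|$, so the partial sums
\[
\sum_{j=0}^{K}\frac{z^{j}}{j!}M_{\varphi}^{j}
\]
are Cauchy in the operator norm, uniformly on compact subsets of $\mathbb{C}$. The pointwise limit equals $M_{z}$ by the scalar identity $\mathrm{e}^{z\varphi(x)}=\sum_{j\geq 0}z^{j}\varphi(x)^{j}/j!$ and the fact that convergence in $\|\cdot\|$ implies convergence in $|\cdot|_{\infty}$ via \eqref{ineq <l}. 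Standard theory of Banach-space-valued analytic functions (as captured in Definition \ref{def: weak strong analyt}) then gives that $M_{z}$ is entire with derivatives obtained by termwise differentiation:
\[
M_{z}^{(k)}=\sum_{j=k}^{\infty}\frac{z^{j-k}}{(j-k)!}M_{\varphi}^{j}=M_{\varphi}^{k}M_{z}=M_{z}M_{\varphi}^{k},
\]
the last equality since $M_{\varphi}$ commutes with every $M_{\varphi}^{j}$.

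Next I would compose with $\widehat{T}$. By Property $\mathfrak{C}$, in particular \eqref{C 0 f}, $\widehat{T}$ is a bounded linear operator on $\mathcal{F}$, and left multiplication by a bounded operator is continuous on the Banach space of bounded operators. Hence $\widehat{T}_{z}=\widehat{T}\circ M_{z}$ is analytic on $\mathbb{C}$ and, differentiating $k$ times,
\[
\widehat{T}_{z}^{(k)}=\widehat{T}\circ M_{z}^{(k)}=\widehat{T}\circ\bigl(M_{z}M_{\varphi}^{k}\bigr)=\widehat{T}_{z}M_{\varphi}^{k},
\]
which is exactly \eqref{Tzk}.

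No step is a real obstacle; the only subtlety is making sure that the formal manipulations on the power series are justified in the operator-norm topology rather than pointwise on $\mathcal{F}$. The Banach algebra property $\|\varphi^{k}\|\leq\|\varphi\|^{k}$ baked into Property $\mathfrak{C}$ is exactly what makes the series $\sum_{j}z^{j}M_{\varphi}^{j}/j!$ converge in the operator norm, and once that is in place both the analyticity of $M_{z}$ and term-by-term differentiation are standard.
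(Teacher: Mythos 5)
Your proposal is correct and follows essentially the same route as the paper: both expand into the operator-norm-convergent power series $\sum_{n\geq 0}\frac{z^{n}}{n!}\widehat{T}M_{\varphi}^{n}$, using the Banach algebra bound $\left\|\varphi^{n}h\right\|\leq\left\|\varphi\right\|^{n}\left\|h\right\|$ and the boundedness of $\widehat{T}$ from \eqref{C 0 f}, and then obtain \eqref{Tzk} by termwise differentiation. The only cosmetic difference is that you factor through the entire multiplication operator $M_{z}$ and compose with $\widehat{T}$, whereas the paper differentiates the series for $\widehat{T}_{z}$ directly after invoking the equivalence of weak and strong analyticity.
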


\begin{proof}
To show the analyticity of $\widehat{ T}_z$, observe that 
by definition
\begin{align}\label{series exp}
 \widehat{ T}_zh
 &=\widehat{T}\left(\sum_{n=0}^{\infty}\frac{\left(z\cdot\varphi\right)^n}{n!}\cdot h\right)
 =\sum_{n=0}^{\infty}\frac{z^n}{n!}\cdot \widehat{T}\left(\varphi^n\cdot h\right)
 =\sum_{n=0}^{\infty} \frac{z^n}{n!}\cdot\widehat{T} M^n_{\varphi}h
\end{align}
in a neighborhood of zero.
$M_{\varphi}$ is bounded, because $\left\|M_{\varphi} h \right\|\leq \left\|\varphi\right\| \left\|h\right\|$.
Therefore, 
\begin{align*}
\left\|\widehat{ T}_z\right\|\leq \left\|\widehat{ T}\right\|\cdot \sum_{n=0}^{\infty}\frac{\left|z\right|^n\cdot\left\|\varphi\right\|^n}{n!}
\end{align*}
converges, i.e.\ $\widehat{ T}_z$ converges absolutely with respect to $\left\|\cdot\right\|$ and the representation in \eqref{series exp} holds for all $z\in\mathbb{C}$. 
Hence, for each $\varphi$ element of the dual of the space of bounded linear operators, $\varphi\,\big(\widehat{ T}_z\big)$ can be expanded into a power series.
Since strong and weak analyticity are equivalent  it follows that $\widehat{ T}_z'$ and by the analyticity also the derivatives $\widehat{ T}_z^{\left(n\right)}$ exist. 
\eqref{Tzk} follows from the series expansion \eqref{series exp} by induction.
\end{proof}

In the following we are considering for which $\varphi$ and $z\in\mathbb{C}$ 
the operator $\upperleft{\varphi}{\widehat}{T}_z$ is close to $\widehat{T}$.
In order to specify closeness we start with some definitions concerning the norms.
\begin{Def}\label{dist measure}
For $M\subset\mathcal{F}$ set $S_{M}\coloneqq\left\{u\in M\colon \left\|u\right\|=1\right\}$.
We define for two sets $M,N\subset \mathcal{F}$ and $u\in\mathcal{F}$ the distance measures
\begin{align*}
\dist\left(u,M\right)&\coloneqq\inf_{v\in M}\left\|u-v\right\|,\\
d \left(M,N\right)&\coloneqq\sup_{u\in S_M}\dist\left(u,N\right),\\
\widehat{d }\left(M,N\right)&\coloneqq\max\left\{d \left(M,N\right),d \left(N,M\right)\right\}.  
\end{align*}
Furthermore, we define for two operators $U,V:\mathcal{F}\to\mathcal{F}$ 
\begin{align*}
\widehat{d }\left(U,V\right)\coloneqq\widehat{d }\left(G\left(U\right),G\left(V\right)\right),
\end{align*}
where $G\left(U\right)$ denotes the graph of $U$. 
As the graph norm for a graph of an operator $U:\mathcal{F}\to \mathcal{F}$ we use for $\left(x, y\right)\in \mathcal{F}\times \mathcal{F}$ the norm
\begin{align*}
\left\|\left(x,y\right)\right\|\coloneqq\sqrt{\left\|x\right\|^2+\left\|y\right\|^2}.
\end{align*}
\end{Def}
\begin{Rem}
Note that in \cite{kato_perturbation_1995} $d$ and $\widehat{d}$ are denoted by $\delta$ and $\widehat{\delta}$, whereas $d$ are $\widehat{d}$ are used in there to denote some other distance measures.

 Indeed, the convergence with respect to $\widehat{d}$ is a generalization to convergence in norm as the following calculation for $U, V\colon\mathcal{F}\to\mathcal{F}$ shows:
 By Definition \ref{dist measure} it follows that
\begin{align}
d \left(G(U),G\left(V\right)\right)&=\adjustlimits\sup_{u\in S_{G\left(U\right)}}\inf_{v\in G\left(V\right)}\left\|u-v\right\|\notag\\
&\leq \sup_{\left\|\left(f,Vf\right)\right\|\leq 1}\sqrt{\left\|f-f\right\|^2+\left\|U f-V f\right\|^2}\notag\\
&\leq \sup_{\left\|f\right\|\leq 1}\left\|U f-V f\right\|.\label{delta 1} 
\end{align}
Similarly, we obtain
\begin{align}
d \left(G\left(V\right),G\left(U\right)\right)
&\leq \sup_{\left\|f\right\|\leq 1}\left\|V f-U f\right\|.\label{delta 2} 
\end{align}
Combining \eqref{delta 1} and \eqref{delta 2} yields 
\begin{align}
\widehat{d }\left(U,V\right)
&\leq \sup_{\left\|f\right\|\leq 1}\left\|V f-U f\right\|.\label{eq: hat d UV < norm} 
\end{align}
\end{Rem}

\begin{lemma}\label{lemma: distance of pertub}
Let $\left(\Omega, \mathcal{A},  T, \mu,\mathcal{F},\left\|\cdot\right\|\right)$ fulfill Property $\mathfrak{C}$. 
Let $\upperleft{\varphi}{\widehat}{T_z}$ be defined as in \eqref{eq: def twisted transfer}.
Then for all $\epsilon>0$ there exists $\delta>0$ such that $\left|z\right|\cdot \left\|\varphi\right\|<\delta$
implies $\widehat{d}\left(\upperleft{\varphi}{\widehat}{T_z}, T\right)<\epsilon$.
\end{lemma}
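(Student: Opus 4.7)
The plan is to reduce the graph-distance bound to an operator-norm bound and then to a simple scalar estimate on the exponential. Specifically, the inequality \eqref{eq: hat d UV < norm} already gives
\[
\widehat{d}\bigl(\upperleft{\varphi}{\widehat}{T_z},\widehat{T}\bigr)\;\leq\;\sup_{\|f\|\leq 1}\bigl\|\upperleft{\varphi}{\widehat}{T_z}f-\widehat{T}f\bigr\|,
\]
so it suffices to control the right-hand side uniformly in $f$ once $|z|\cdot\|\varphi\|$ is small.

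First I would write the difference as
\[
\upperleft{\varphi}{\widehat}{T_z}f-\widehat{T}f \;=\; \widehat{T}\bigl((\mathrm{e}^{z\varphi}-1)\cdot f\bigr),
\]
and then apply the boundedness of $\widehat{T}$ from \eqref{C 0 f} together with the Banach algebra property of $(\mathcal{F},\|\cdot\|)$, yielding
\[
\bigl\|\upperleft{\varphi}{\widehat}{T_z}f-\widehat{T}f\bigr\|\;\leq\;K_0\cdot\|\mathrm{e}^{z\varphi}-1\|\cdot\|f\|.
\]
Next I would estimate $\|\mathrm{e}^{z\varphi}-1\|$ using the Taylor series (which converges absolutely in $\mathcal{F}$, exactly as in the proof of Lemma \ref{lemma Tz}):
\[
\|\mathrm{e}^{z\varphi}-1\|\;\leq\;\sum_{n=1}^{\infty}\frac{|z|^n\cdot\|\varphi\|^n}{n!}\;=\;\mathrm{e}^{|z|\cdot\|\varphi\|}-1.
\]

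Combining the two estimates, for every $f$ with $\|f\|\leq 1$,
\[
\bigl\|\upperleft{\varphi}{\widehat}{T_z}f-\widehat{T}f\bigr\|\;\leq\;K_0\bigl(\mathrm{e}^{|z|\cdot\|\varphi\|}-1\bigr).
\]
Given $\epsilon>0$, one now simply chooses $\delta>0$ so small that $K_0(\mathrm{e}^{\delta}-1)<\epsilon$; whenever $|z|\cdot\|\varphi\|<\delta$ the desired bound $\widehat{d}(\upperleft{\varphi}{\widehat}{T_z},\widehat{T})<\epsilon$ follows.

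I do not anticipate any real obstacle: the argument is essentially a two-line Banach-algebra calculation, and crucially it is uniform in $\varphi$ because the estimate depends on $\varphi$ only through the product $|z|\cdot\|\varphi\|$. This uniformity is exactly what is needed later (in Lemma \ref{spect sep analyt} and Lemma \ref{lemma: resolvent}) to obtain bounds on the spectral decomposition of $\upperleft{\varphi}{\widehat}{T_z}$ that do not depend on the particular choice of the observable $\varphi$.
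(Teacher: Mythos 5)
Your proof is correct and follows essentially the same route as the paper: reduce $\widehat{d}$ to the operator norm via \eqref{eq: hat d UV < norm}, write the difference as $\widehat{T}\left(\left(\mathrm{e}^{z\varphi}-1\right)f\right)$, bound it by $K_0\cdot\left(\exp\left(\left|z\right|\cdot\left\|\varphi\right\|\right)-1\right)$ using \eqref{C 0 f}, the Banach algebra property, and the exponential series, and then choose $\delta$ accordingly. Nothing is missing; the uniformity in $\varphi$ through the product $\left|z\right|\cdot\left\|\varphi\right\|$ is exactly the point the paper also exploits.
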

\begin{proof}
Let $\epsilon>0$ be given such that $\widehat{d }\,\big(\widehat{ T},\upperleft{\varphi}{\widehat}{ T_z}\big)<\epsilon$.
By the definition of $\upperleft{\varphi}{\widehat}{ T_z}$ and inequalities \eqref{C 0 f} and \eqref{eq: hat d UV < norm} we obtain
\begin{align}
\widehat{d }\left(\widehat{ T},\upperleft{\varphi}{\widehat}{ T_z}\right)
&\leq \sup_{\left\|f\right\|\leq 1}\left\|\widehat{ T}\left(\exp\left(\varphi\cdot z\right)\cdot f\right)-\widehat{ T} f\right\| =\sup_{\left\|f\right\|\leq 1}\left\|\widehat{ T}\left(\left(\exp\left(\varphi\cdot z\right)-1\right)\cdot f\right)\right\|\notag\\
&\leq \sup_{\left\|f\right\|\leq 1}K_0\cdot \left\|\left(\exp\left(\varphi\cdot z\right)-1\right)\cdot f\right\|\leq K_0\cdot \left\|\left(\exp\left(\varphi\cdot z\right)-1\right)\right\|,\label{delta 3}
\end{align}
where the last inequality follows from the fact that $\mathcal{F}$ is a Banach algebra.
Furthermore, we have that 
\begin{align}
\left\|\exp\left(z\cdot \varphi\right)-1\right\|=\left\|\sum_{n=1}^{\infty}\frac{\left(z\cdot \varphi\right)^n}{n!}\right\|\leq \sum_{n=1}^{\infty}\frac{\left|z\right|^n\cdot\left\|\varphi\right\|^n}{n!}\leq\exp\left(\left\|\varphi\right\|\cdot\left|z\right|\right)-1\label{exp z kappa}
\end{align}
and we can thus conclude from \eqref{delta 3} that 
\begin{align*}
\widehat{d }\left(\widehat{ T},\upperleft{\varphi}{\widehat}{ T_z}\right)
&\leq K_0\cdot \left(\exp\left(\left\|\varphi\right\|\cdot\left|z\right|\right)-1\right).
\end{align*}
Furthermore, there exists $\delta>0$ such that $\left\|\varphi\right\|\cdot\left|z\right|<\delta$ implies
$K_0\cdot \left(\exp\left(\left\|\varphi\right\|\cdot\left|z\right|\right)-1\right)<\epsilon$, which gives the statement of the lemma.
\end{proof}

In the next steps we prove that the operator $\upperleft{\varphi}{\widehat}{T_z}$, if close enough to $\widehat{T}$, can indeed be spectrally decomposed similarly as $\widehat{T}$ without saying anything about the analyticity of the spectral components.
More precisely we formulate this in the following lemma.
\begin{lemma}\label{spect sep}
Let $\left(\Omega, \mathcal{A},  T, \mu,\mathcal{F},\left\|\cdot\right\|\right)$ fulfill Property $\mathfrak{C}$. 
Let $\upperleft{\varphi}{\widehat}{T_z}$ be defined as in \eqref{eq: def twisted transfer}.
Then there exists $\kappa>0$ such that for all $\varphi\in\mathcal{F}$ with $\int \varphi\mathrm{d}\mu=0$ and  $\left|z\right|\cdot \left\|\varphi\right\| < \kappa$ 
a spectral decomposition as in \eqref{spect sep form} exists fulfilling \eqref{spec gap prop z}.
\end{lemma}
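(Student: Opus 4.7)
The plan is to produce the decomposition by the classical Riesz-projection / contour-integral construction from analytic perturbation theory, using the proximity bound of Lemma \ref{lemma: distance of pertub} to guarantee that the relevant resolvents are well defined uniformly in $\varphi$. Fix $\theta\in(0,(1-\rho(N_0))/2)$ as in \eqref{spec gap prop z}. By Lemma \ref{spec gap} the spectrum of $\widehat{T}$ splits as $\{1\}\cup\Sigma'$ with $\Sigma'\subset\{|\xi|\le\rho(N_0)\}$, so the two circles
\[
\Gamma_1\coloneqq\{\xi\in\mathbb{C}\colon|\xi-1|=\theta\},\qquad
\Gamma_2\coloneqq\{\xi\in\mathbb{C}\colon|\xi|=\rho(N_0)+\theta\}
\]
lie in the resolvent set of $\widehat{T}$ and enclose the two spectral pieces separately. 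Hence $\Gamma_1$ surrounds only the simple eigenvalue $1$, while $\Gamma_2$ surrounds the entire spectrum of $\widehat{T}$.

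First I would establish, via \cite[Theorem IV-3.16]{kato_perturbation_1995}, that the resolvent $(\xi-\widehat{T})^{-1}$ exists and is uniformly bounded in operator norm on the compact set $\Gamma_1\cup\Gamma_2$; let $M$ denote this bound. A standard Neumann-series perturbation argument shows that whenever $\widehat{d}(\widehat{T}_z,\widehat{T})$ is smaller than some explicit function of $M$ and $\theta$, the perturbed resolvent $(\xi-\widehat{T}_z)^{-1}$ also exists on $\Gamma_1\cup\Gamma_2$ and is uniformly bounded there. Invoking Lemma \ref{lemma: distance of pertub}, this smallness is achieved by requiring $|z|\cdot\|\varphi\|<\kappa$ for a sufficiently small $\kappa=\kappa(\theta)>0$; crucially, the resulting bound depends only on $|z|\cdot\|\varphi\|$, not on $\varphi$ itself, which is precisely what is needed.

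With the resolvents at hand I would define the Riesz projections
\[
P_z\coloneqq\frac{1}{2\pi i}\oint_{\Gamma_1}(\xi-\widehat{T}_z)^{-1}\,\mathrm{d}\xi,\qquad
Q_z\coloneqq\frac{1}{2\pi i}\oint_{\Gamma_2}(\xi-\widehat{T}_z)^{-1}\,\mathrm{d}\xi - P_z,
\]
which automatically satisfy $P_z^2=P_z$, $P_z Q_z=Q_z P_z=0$ and commute with $\widehat{T}_z$. Since $P_z$ depends continuously in norm on $\widehat{T}_z$, for $\kappa$ small enough $\|P_z-P_0\|<1$ and thus $\dim\Image(P_z)=\dim\Image(P_0)=1$ by the usual rank-invariance argument for close projections \cite[Lemma I-4.10]{kato_perturbation_1995}. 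The one-dimensional restriction $\widehat{T}_z|_{\Image(P_z)}$ is multiplication by a scalar $\lambda_z$ lying inside $\Gamma_1$, hence $|\lambda_z|\ge 1-\theta$. Setting $N_z\coloneqq\widehat{T}_z(\id-P_z)$ gives $\widehat{T}_z=\lambda_z P_z+N_z$ with the required orthogonality; since the spectrum of $N_z$ is contained in the region inside $\Gamma_2$ but outside $\Gamma_1$, we obtain $\rho(N_z)\le\rho(N_0)+\theta$, as desired.

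The main technical obstacle, and the point at which the lemma genuinely goes beyond standard perturbation theory, is the \emph{uniformity} of $\kappa$ over all admissible $\varphi$: Kato's results give a $\kappa$ for each fixed one-parameter family $(\widehat{T}_z)_{z}$ associated with a fixed $\varphi$, whereas here I need a single threshold serving every $\varphi\in\mathcal{F}$ with $\int\varphi\,\mathrm{d}\mu=0$. The key observation that makes this work is that the estimate in Lemma \ref{lemma: distance of pertub} controls $\widehat{d}(\widehat{T}_z,\widehat{T})$ purely through $K_0$ and the product $|z|\cdot\|\varphi\|$, so the entire Neumann-series argument and the subsequent bound on $\|P_z-P_0\|$ depend only on this product. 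Once this is unpacked carefully, the stated $\kappa$ can be read off explicitly from $\theta$, $K_0$, and $M$.
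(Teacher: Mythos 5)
Your overall strategy is in substance the same as the paper's: the paper applies Kato's stability theorem for a separated spectrum (quoted as Lemma \ref{Kato IV 3.16}, i.e.\ \cite[Theorem IV-3.16]{kato_perturbation_1995}) to two circles, and the uniformity of $\kappa$ in $\varphi$ comes, exactly as you observe, from the fact that Lemma \ref{lemma: distance of pertub} controls the distance of $\widehat{T}_z$ from $\widehat{T}$ only through $K_0$ and the product $\left|z\right|\cdot\left\|\varphi\right\|$. You instead re-derive the needed case of Kato's theorem by the Riesz-projection construction; that is legitimate, and your arguments for $\dim\Image(P_z)=1$ (via $\|P_z-P_0\|<1$) and for $\left|\lambda_z\right|\ge 1-\theta$ (since $\lambda_z$ lies inside $\Gamma_1$) are fine. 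For the Neumann-series step you should invoke the norm estimate $\|\widehat{T}_z-\widehat{T}\|\le K_0\left(\exp\left(\left|z\right|\cdot\left\|\varphi\right\|\right)-1\right)$, which is what the proof of Lemma \ref{lemma: distance of pertub} actually establishes, rather than the gap estimate $\widehat{d}$, since smallness of $\widehat{d}$ alone does not directly feed a Neumann series.

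There is, however, a genuine gap in your final step. You control the resolvent of $\widehat{T}_z$ only on the curves $\Gamma_1\cup\Gamma_2$. This yields the two Riesz projections and the rank statement, but it does not show that $\spec(\widehat{T}_z)$ is contained in the union of the two open disks: a priori $\widehat{T}_z$ could have spectrum in the region exterior to both circles, and any such part belongs to $N_z=\widehat{T}_z(\id-P_z)$, so the claim $\rho(N_z)\le\rho(N_0)+\theta$ does not follow from what you proved. This containment is exactly part of the conclusion of the Kato theorem the paper cites. Within your approach it can be repaired in either of two standard ways: extend the uniform resolvent bound, and hence the Neumann argument, from the curves to the whole closed region $\left\{\xi\colon\left|\xi\right|\ge\rho(N_0)+\theta,\ \left|\xi-1\right|\ge\theta\right\}$, which lies in the resolvent set of $\widehat{T}$ and on which $\sup_\xi\|(\xi-\widehat{T})^{-1}\|<\infty$ because $\|(\xi-\widehat{T})^{-1}\|\le(\left|\xi\right|-\|\widehat{T}\|)^{-1}$ for large $\left|\xi\right|$; or note that the complementary idempotent $\id-\frac{1}{2\pi i}\oint_{\Gamma_2}(\xi-\widehat{T}_z)^{-1}\,\mathrm{d}\xi$ is norm-close to its unperturbed counterpart, hence has rank one, and since it dominates the rank-one projection $P_z$, the spectral projection of any stray spectral part outside both disks must vanish, i.e.\ no such part exists. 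Separately, your auxiliary operator $Q_z$ is not a projection and does not satisfy $P_zQ_z=0$ as written (the contour $\Gamma_2$ encloses only the spectrum near $0$, not $\Image(P_z)$); since $Q_z$ is never used, simply drop it and keep the orthogonality in \eqref{spec gap prop z} from $N_z=\widehat{T}_z(\id-P_z)$ and the commutation of $P_z$ with $\widehat{T}_z$.
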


For the proof of Lemma \ref{spect sep}
we will basically use the following parts of \cite[Theorem IV-3.16]{kato_perturbation_1995}:
\begin{lemma}\label{Kato IV 3.16}
Let $\mathcal{G}$ be a Banach space, let $U:\mathcal{G}\to\mathcal{G}$ be a bounded linear operator, 
and let $\Sigma\left(U\right)\coloneqq\spec\left(U\right)$ be separated into two parts $\Sigma'\left(U\right)$ and $\Sigma''\left(U\right)$ by a simple closed curve $\Gamma$.
Let $\mathcal{G}=M'\left(U\right)\oplus M''\left(U\right)$ be the associated decomposition of $\mathcal{G}$. 
Then there exists $\epsilon>0$ depending on $U$ and $\Gamma$ such that for all $V:\mathcal{G}\to\mathcal{G}$ with $\widehat{d }\left(U,V\right)<\epsilon$ 
the spectrum $\Sigma(V)\coloneqq \spec(V)$ is separated by $\Gamma$ into $\Sigma'(V)$ and $\Sigma''(V)$
and the associated decomposition 
\begin{align*}
\mathcal{G}=M'\left(V\right)\oplus M''\left(V\right)
\end{align*}
fulfills
\begin{align*}
\dim M'\left(V\right)=\dim M'\left(U\right)\text{ and }\dim M''\left(V\right)=\dim M''\left(U\right).
\end{align*}
The decomposition $\mathcal{G} = M' (V) \oplus M'' (V)$ is continuous in $V$ in the sense that
for all $\epsilon>0$ there exists $\delta>0$ such that $\widehat{d }\left(U, V\right)<\delta$
implies $\left\|P[U]- P[V]\right\|<\epsilon$, 
where $P[U]$ denotes the projection of $\mathcal{G}$ onto $M' (V)$ along $M'' (V)$ and $P[V]$ analogously.

The lemma holds analogously for separating the spectrum in more than one component.
\end{lemma}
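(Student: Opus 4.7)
The plan is to prove this via Riesz' functional calculus: construct both spectral projections as contour integrals of the resolvent along $\Gamma$, then control the perturbed resolvent by a Neumann series argument. Concretely, since $\Gamma$ separates $\spec(U)$, the curve lies entirely in the resolvent set $\rho(U)$; by continuity and compactness, $M_\Gamma := \sup_{\zeta \in \Gamma} \|(\zeta - U)^{-1}\| < \infty$, and the projection onto $M'(U)$ along $M''(U)$ is
\[
P[U] = \frac{1}{2\pi i} \oint_\Gamma (\zeta - U)^{-1}\, d\zeta.
\]

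Next I would convert the gap hypothesis into a usable norm estimate. Since the domains here are all of $\mathcal{G}$, a careful unpacking of $\widehat{d}(U,V) < \epsilon$ using the graph norm shows that, after choosing for each unit $x$ a near-optimal $y$, one obtains a bound of the form $\|(V - U)x\| \leq C \epsilon$ for a constant depending on $\|U\|$, and in particular $\|V - U\| \cdot M_\Gamma < 1$ once $\epsilon$ is small enough (the threshold $\epsilon$ announced in the lemma). With this in hand, the Neumann series
\[
(\zeta - V)^{-1} = (\zeta - U)^{-1} \sum_{n=0}^{\infty} \bigl((V - U)(\zeta - U)^{-1}\bigr)^n
\]
converges uniformly for $\zeta \in \Gamma$, so $\Gamma \subset \rho(V)$ and $\|(\zeta - V)^{-1} - (\zeta - U)^{-1}\|$ is $O(\epsilon)$, uniformly on $\Gamma$. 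Setting $P[V] := \frac{1}{2\pi i} \oint_\Gamma (\zeta - V)^{-1}\, d\zeta$ then gives the Riesz projection associated with the part of $\spec(V)$ inside $\Gamma$, yielding the decomposition $\mathcal{G} = M'(V) \oplus M''(V)$, and the resolvent estimate translates into $\|P[U] - P[V]\| \to 0$ as $\widehat{d}(U,V) \to 0$, which is the asserted continuity.

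Finally, to match the dimensions, I would invoke the classical fact that two projections $P, Q$ with $\|P - Q\| < 1$ are similar: the operator $W := QP + (I-Q)(I-P)$ satisfies $W - I = (Q - P)(2P - I)$, hence is invertible, and $WP = QW$ shows $M'(Q) = W\, M'(P)$, preserving dimensions. Applying this to $P[U]$ and $P[V]$ once $\widehat{d}(U,V)$ is small enough gives $\dim M'(V) = \dim M'(U)$ and $\dim M''(V) = \dim M''(U)$. The extension to separating the spectrum into more than two components is identical, using disjoint contours $\Gamma_1, \ldots, \Gamma_k$ in place of one.

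The main obstacle is the second step: the hypothesis is phrased in the gap metric $\widehat{d}$ rather than in operator norm, and in general $\widehat{d}$ is strictly weaker. For bounded operators defined on all of $\mathcal{G}$ one can still extract a norm-type estimate from $\widehat{d}$, but it requires careful bookkeeping with the graph-norm definition (the best-approximation $y$ for a given $x$ need not equal $x$); this is the step where the quantitative threshold $\epsilon = \epsilon(U, \Gamma)$ gets pinned down in terms of $M_\Gamma$ and $\|U\|$. Once this is done, everything else is standard Riesz calculus plus the similarity-of-close-projections lemma.
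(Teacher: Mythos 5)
This lemma is never proved in the paper: it is quoted directly as \cite[Theorem IV-3.16]{kato_perturbation_1995}, so there is no in-paper argument to compare yours against. Your proposal is a correct self-contained reconstruction of the standard proof behind Kato's theorem: write $P[U]$ as the Riesz contour integral of the resolvent over $\Gamma$, convert the gap hypothesis into an operator-norm estimate, run the Neumann series for $\zeta-V=(\zeta-U)\left(I-(\zeta-U)^{-1}(V-U)\right)$ uniformly over the compact curve $\Gamma$ to place $\Gamma$ in the resolvent set of $V$ and to get $\left\|P[U]-P[V]\right\|=O(\epsilon)$, and deduce the dimension statements from the similarity of nearby projections; the multi-component case is the same argument on disjoint contours. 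You also correctly identified the only genuinely delicate point, namely that the hypothesis is in the gap metric $\widehat{d}$ rather than in norm.

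Two details should be written out. First, in the gap-to-norm step your intermediate estimate contains the factor $\left\|V\right\|$ (you bound $\left\|Vx/c-Vy\right\|\leq\left\|V\right\|\left\|x/c-y\right\|$), so before concluding $\left\|(V-U)x\right\|\leq C\epsilon$ with $C=C\left(\left\|U\right\|\right)$ you must first extract a uniform bound such as $\left\|V\right\|\leq 2\left\|U\right\|+1$ from the other half of the gap, $d\left(G(V),G(U)\right)<\epsilon$, valid once $\epsilon$ is small compared with $\left(\left\|U\right\|+1\right)^{-1}$ (this is also where one sees that a merely closed $V$ is automatically bounded). Second, invertibility of $W=P[V]P[U]+(I-P[V])(I-P[U])$ does not follow from $\left\|P[U]-P[V]\right\|<1$ together with your identity $W-I=\left(P[V]-P[U]\right)\left(2P[U]-I\right)$, since $\left\|2P[U]-I\right\|$ may exceed $1$; either note that here the difference of projections is $O(\epsilon)$, so you may simply demand $\left\|P[U]-P[V]\right\|\cdot\left\|2P[U]-I\right\|<1$, or use the standard identities $W'W=I-\left(P[U]-P[V]\right)^{2}$ and $WW'=I-\left(P[V]-P[U]\right)^{2}$ with $W'=P[U]P[V]+(I-P[U])(I-P[V])$. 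With these repairs the argument is complete and proves exactly the cited statement, including the continuity assertion.
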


\begin{proof}[Proof of Lemma \ref{spect sep}]
Since $\widehat{ T}$ has a spectral gap, we can write
\begin{align*}
\widehat{ T}=\widehat{ T}_0=\lambda_0 P_0 + N_0
\end{align*}
with $\lambda_0$, $P_0$, and $N_0$ fulfilling \eqref{spec gap prop z}.
For proving that $\widehat{ T}_z$ can be represented in the same manner we aim to apply Lemma \ref{Kato IV 3.16} splitting the spectrum formally in three components. 
First we choose for $\Gamma=\Gamma_1$ a circle around $0$ with radius $r=\rho(N_0)+\theta$
and as a second step we choose for $\Gamma=\Gamma_2$ a circle around $0$ with radius $r=1-\theta$.
By the fact that $\widehat{T}$ has a spectral gap and Lemma \ref{spec gap} this separates the spectrum of $\widehat{T}$ into the single eigenvalue $1$ 
(outside the circle $\Gamma_1$), an empty part of the spectrum (between the circles $\Gamma_1$ and $\Gamma_2$) 
and the remainder of the spectrum $\spec\big(\widehat{ T}\big)\backslash \left\{1\right\}$ (inside the circle $\Gamma_2$). 

These circles determine $\epsilon$ such that the spectrum of all $U:\mathcal{F}\to\mathcal{F}$ with $\widehat{d }\,\big(\widehat{ T},U\big)<\epsilon$ is separated in the above described way. 
Hence, $\widehat{d }\,\big(\widehat{ T},\upperleft{\varphi}{\widehat}{ T_z}\big)<\epsilon$ implies that a spectral decomposition as in \eqref{spect sep form} exists fulfilling \eqref{spec gap prop z}.
Using Lemma \ref{lemma: distance of pertub} and choosing $\kappa$ appropriately gives the statement of the lemma.
\end{proof}

Finally, to prove Lemma \ref{spect sep analyt} we will use \cite[Theorems VII-1.7 and VII-1.8]{kato_perturbation_1995} and their proofs. Together they state the following.
\begin{lemma}\label{Kato VII 1.7}
If a family of operators $\left(U_z\right)_{z\in\mathbb{C}}$ with $U_z:\mathcal{F}\to\mathcal{F}$ depends on $z$ holomorphically 
and has a spectrum consisting of two separated parts and each $U_z$ can be represented as $U_z=\lambda_zP_z+N_z$ fulfilling the requirements as in \eqref{spec gap prop z}, 
then there exists $\epsilon>0$ such that $\lambda_z$, $P_z$, and $N_z$ consist of branches of one or several analytic functions with at most algebraic singularities in $B\left(\epsilon,0\right)$, i.e.\ the ball with radius $\epsilon$ around zero. 
Let
\begin{align}
\left\|\left(U_z-U_0\right)f\right\|\leq a\cdot \left\|f\right\|+b\cdot \left\|U_z f\right\|\label{U1}
\end{align}
and
\begin{align}
a\cdot\left\|U_0^{-1}\right\|+b<1,\label{U2} 
\end{align}
then $\epsilon$ can be determined as the supremum of $r\in\mathbb{R}_{>0}$ such that \eqref{U1} and \eqref{U2} hold for all $\left|z\right|\leq r$ and all $f\in\mathcal{F}$. 
\end{lemma}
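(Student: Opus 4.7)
The plan is to follow Kato's classical contour-integral perturbation approach. The spectral gap assumption yields a simple closed contour $\Gamma$ in the resolvent set of $U_0$ separating $\{\lambda_0\}$ from the rest of $\Sigma(U_0)$. First I would show that $\Gamma$ remains in the resolvent set of $U_z$ for all sufficiently small $z$. The key identity is
\begin{align*}
(U_z - \zeta)^{-1} = (U_0 - \zeta)^{-1}\sum_{n=0}^{\infty}\bigl[-(U_z - U_0)(U_0 - \zeta)^{-1}\bigr]^n,
\end{align*}
which converges provided $\left\|(U_z - U_0)(U_0 - \zeta)^{-1}\right\|<1$. Since $z\mapsto U_z$ is holomorphic, and $\zeta\mapsto(U_0-\zeta)^{-1}$ is continuous (hence bounded) on the compact $\Gamma$, such a bound holds uniformly on $\Gamma$ for $z$ close enough to $0$; moreover the series is jointly holomorphic in $(z,\zeta)$.

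With the resolvent in hand, I would define
\begin{align*}
P_z = -\frac{1}{2\pi i}\oint_\Gamma(U_z-\zeta)^{-1}\,d\zeta,
\end{align*}
obtaining a holomorphic family of projections by uniform convergence on $\Gamma$. Continuity of projections forces $\dim \Image(P_z)$ to remain equal to $1$. Since $\Image(P_z)$ is a one-dimensional $U_z$-invariant subspace, one has $U_z P_z=\lambda_z P_z$. To exhibit $\lambda_z$ as an analytic function, fix $f_0\in\mathcal{F}$ and a continuous linear functional $\psi$ with $\psi(P_0 f_0)\neq 0$; by continuity $\psi(P_z f_0)\neq 0$ near $z=0$, and
\begin{align*}
\lambda_z = \frac{\psi(U_z P_z f_0)}{\psi(P_z f_0)}
\end{align*}
is a ratio of holomorphic functions, hence holomorphic. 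Setting $N_z := U_z-\lambda_z P_z$ completes the spectral decomposition and yields analyticity of all three factors. In Kato's general setting, ranks of projections can exceed one and branches of $\lambda_z$ can collide, producing Puiseux singularities (this is the origin of the ``algebraic singularities'' clause in the statement); in the situation at hand no such singularities arise because $\dim\Image(P_z)=1$ throughout.

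For the explicit determination of $\epsilon$, the hypotheses \eqref{U1} and \eqref{U2} are precisely what force the Neumann-series factor $I+(U_z-U_0)(U_0-\zeta)^{-1}$ to be invertible uniformly on $\Gamma$. Applying \eqref{U1} with $f=(U_0-\zeta)^{-1}g$ and rearranging, one bounds the relevant operator norm by a quantity controlled by $a\|U_0^{-1}\|+b$, which is strictly less than $1$ by \eqref{U2} (up to factors from $\Gamma$ that can be absorbed by shrinking $r$). Hence the supremum of admissible $r$ for which \eqref{U1} and \eqref{U2} hold throughout $|z|\leq r$ is a valid choice of $\epsilon$. The main technical obstacle is the \emph{indirect} relative bound in \eqref{U1}, which controls $U_z-U_0$ not in terms of $U_0$ but in terms of $U_z$ itself; this means the inversion has to be handled self-consistently rather than by a straightforward Neumann series expansion around $U_0$. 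Kato's device is to rewrite the bound as $\|(U_z-U_0)U_0^{-1}g\|\leq a\|U_0^{-1}g\|+b\|g+(U_z-U_0)U_0^{-1}g\|$ and iterate, producing the factor $a\|U_0^{-1}\|+b$ in the final estimate.
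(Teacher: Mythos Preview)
The paper does not supply its own proof of this lemma: it is stated as a direct quotation of \cite[Theorems VII-1.7 and VII-1.8]{kato_perturbation_1995}, and the proof block immediately following it in the text is the proof of Lemma~\ref{spect sep analyt}, not of this lemma. Your sketch is precisely the classical Kato contour-integral argument (Riesz projection via the resolvent, Neumann series for the perturbed resolvent, analyticity of $\lambda_z$ as a quotient of analytic scalars), so it matches the source the paper cites; your remark that no algebraic singularities actually occur here because $\dim\Image(P_z)\equiv 1$ is exactly the observation the paper uses in the subsequent proof of Lemma~\ref{spect sep analyt}.
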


\begin{proof}[Proof of Lemma \ref{spect sep analyt}]
By Lemma \ref{spect sep} there exists $\kappa>0$ such that the representation $\widehat{ T}_z=\lambda_zP_z+N_z$ fulfilling \eqref{spec gap prop z} exists if $\left|z\right|\cdot \left\|\varphi\right\|<\kappa$. 
We aim to apply Lemma \ref{Kato VII 1.7} on $\widehat{ T}_z$. The analyticity of $\widehat{ T}_z$ is given by Lemma \ref{lemma Tz}. 
By \eqref{C 0 f} we have that 
\begin{align*}
\left\|\left(\widehat{ T}_z-\widehat{ T}_0\right)f\right\|&=\left\|\widehat{ T}_0\left(\left(\exp\left(z\cdot \varphi\right)-1\right)f\right)\right\|
\leq K_0\cdot \left\|\exp\left(z\cdot \varphi\right)-1\right\|\cdot\left\|f\right\|
\end{align*}
and by \eqref{exp z kappa} it follows that
\begin{align*}
\left\|\left(\widehat{ T}_z-\widehat{ T}_0\right)f\right\|&\leq K_0\cdot \left(\exp\left(\left|z\right|\cdot \left\|\varphi\right\|\right)-1\right)\cdot\left\|f\right\|.
\end{align*}
Hence, $U_z\coloneqq\widehat{ T}_z$ fulfills \eqref{U1} if we set $a\coloneqq K_0\cdot \left(\exp\left(\left|z\right|\cdot \left\|\varphi\right\|\right)-1\right)$ and $b\coloneqq0$. 
Since $\exp\left(\left|z\right|\cdot \left\|\varphi\right\|\right)-1$ can be chosen arbitrarily close to zero for sufficiently small $\left|z\right|$, there exists $\varkappa\in\left(0,\kappa\right]$ such that 
$a\leq 1/\|\widehat{T}_0^{-1}\|$
and thus \eqref{U2} holds for all $\left|z\right|\cdot \left\|\varphi\right\|<\varkappa$.

Since \eqref{spect sep form} holds and there is in particular only one branch in $z=0$, it follows that there are no algebraic singularities in $B\left(\epsilon,0\right)$ and $\lambda_z$ is analytic on $B\left(\epsilon,0\right)$.
\end{proof}

Finally, we want to prove Lemma \ref{lemma: resolvent}. First we define for a bounded operator $U$ the resolvent $R\left(\xi,U\right)\coloneqq (\xi\mathrm{id}-U)^{-1}$
and the resolvent set of $U$ as the set of all scalars $\xi$ such that $R\left(\xi,U\right)<\infty$.
Before we start with the proof of this lemma, we first state the following lemma about the perturbation of the resolvent 
which is a simplified version of \cite[Theorem IV-3.15]{kato_perturbation_1995}.

\begin{lemma}\label{lemma: resolvent cont}
Let $\mathcal{G}$ be a Banach space and $U\colon\mathcal{G}\to\mathcal{G}$ be a bounded linear operator.
For all $\epsilon>0$ there exists $\delta>0$ such that for all bounded operators $V\colon\mathcal{G}\to\mathcal{G}$ 
we have that $\widehat{d }(U, V) < \delta$ implies $\left\|R (\xi, V) - R (\xi, U) \right\| <\epsilon$.
\end{lemma}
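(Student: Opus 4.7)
The plan is to use the second resolvent identity together with a Neumann series argument, which is the classical route for resolvent continuity. Throughout, $\xi$ is understood to lie in the resolvent set of $U$, so that $R(\xi,U)$ is a bounded operator.

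\medskip

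First, I would convert the graph-distance hypothesis $\widehat{d}(U,V)<\delta$ into a bound on the operator norm $\|V-U\|$. Given $f$ with $\|f\|\le 1$, the element $(f,Uf)$ has graph norm at most $\sqrt{1+\|U\|^2}$, so after rescaling one may apply the definition of $d(G(U),G(V))$ to find $g\in\mathcal{G}$ with
\[
\|f-g\|\le \delta\sqrt{1+\|U\|^2},\qquad \|Uf-Vg\|\le \delta\sqrt{1+\|U\|^2}.
\]
The triangle inequality then yields $\|(U-V)f\|\le \delta\sqrt{1+\|U\|^2}(1+\|V\|)$. Since $\|V\|\le \|U\|+\|V-U\|$, taking the supremum over $\|f\|\le 1$ and absorbing $\|V-U\|$ into the left-hand side (which is permissible for $\delta\sqrt{1+\|U\|^2}<1/2$) produces a bound of the form
\[
\|V-U\|\;\le\; C(U)\,\delta
\]
with $C(U)$ depending only on $\|U\|$.

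\medskip

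Second, with norm-closeness in hand, I would apply the standard Neumann series factorisation
\[
\xi I - V \;=\; (\xi I-U)\bigl[I-R(\xi,U)(V-U)\bigr].
\]
Provided $\delta$ is chosen so small that $\|R(\xi,U)\|\cdot C(U)\,\delta<1/2$, the bracketed term is invertible, hence $\xi\in\rho(V)$, and the second resolvent identity gives
\[
R(\xi,V)-R(\xi,U)\;=\;R(\xi,V)(V-U)R(\xi,U),
\]
together with the estimate
\[
\bigl\|R(\xi,V)-R(\xi,U)\bigr\|\;\le\;\frac{\|R(\xi,U)\|^{2}\,\|V-U\|}{1-\|R(\xi,U)\|\,\|V-U\|}.
\]
Shrinking $\delta$ further so that the right-hand side is less than $\epsilon$ completes the argument.

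\medskip

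The main obstacle I anticipate is the first step: the relation between $\widehat{d}$ and the operator norm. For bounded operators this conversion is elementary but requires a small bootstrap to ensure the constant $C(U)$ can be chosen independent of $V$; one has to avoid circularly bounding $\|V-U\|$ in terms of itself. Once this step is in place, the Neumann series portion is routine.
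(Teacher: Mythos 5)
Your argument is correct, but it follows a genuinely different route from the paper: the paper does not prove this lemma at all, it simply invokes it as a simplified version of \cite[Theorem IV-3.15]{kato_perturbation_1995}, i.e.\ Kato's general continuity theorem for resolvents of closed operators with respect to the gap metric. You instead give a self-contained elementary proof that exploits boundedness: first you convert $\widehat{d}(U,V)<\delta$ into an operator-norm bound $\|U-V\|\leq C(U)\delta$ (this step is essentially Kato's Theorem IV-2.14 specialised to bounded operators; your bootstrap, absorbing $\|U-V\|$ into the left-hand side, is legitimate precisely because the statement assumes $V$ bounded, so $\|U-V\|<\infty$), and then you run the standard Neumann-series factorisation $\xi I-V=(\xi I-U)\bigl[I-R(\xi,U)(V-U)\bigr]$ and the second resolvent identity to get the quantitative estimate $\|R(\xi,V)-R(\xi,U)\|\leq \|R(\xi,U)\|^{2}\|V-U\|/\bigl(1-\|R(\xi,U)\|\,\|V-U\|\bigr)$. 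What each approach buys: Kato's theorem covers closed, possibly unbounded operators and pairs of operators compared only through their graphs, which is more than is needed here; your proof is more elementary, stays entirely within the bounded setting of the lemma, and yields explicit constants. One point worth making explicit, as you implicitly do by fixing $\xi\in\rho(U)$: your $\delta$ depends on $\xi$ through $\|R(\xi,U)\|$, which matches how the lemma is used in the proof of Lemma \ref{lemma: resolvent}, where uniformity over the compact circle of $\xi$'s is handled separately.
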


\begin{proof}[Proof of Lemma \ref{lemma: resolvent}]
Following the proof of Gelfand's formula, see for example \cite[p.\ 196]{lax_functional_2002},
yields for all $\delta>0$ and $n\in\mathbb{N}$
that 
\begin{align}
\left\|\upperleft{\varphi}{}{N}_z^n\right\|\leq c\cdot \left(\rho\left(\upperleft{\varphi}{}{N}_z\right)+\delta\right)^{n+1}\label{eq: ||Nz^n||}
\end{align}
with
\begin{align*}
c\coloneqq \upperleft{\varphi}{}{c}_z\coloneqq\max_{\left|\xi\right|=\rho\left(\upperleft{\varphi}{}{N}_z\right)+\delta}\left\|R\left(\xi, \upperleft{\varphi}{}{N}_z\right)\right\|.
\end{align*}
In the next steps we will show that there exists $\varkappa'\in\left(0,\varkappa\right)$ such that $\upperleft{\varphi}{}{c}_z$ is uniformly bounded for $\left|z\right|\cdot \left\|\varphi\right\|<\varkappa'$.
Applying Lemma \ref{spect sep analyt} with $\theta\coloneqq \left(1-\rho\left(N_0\right)\right)/4$
yields $\sup_{\left|z\right|<\varkappa/\left\|\varphi\right\|}\rho\left(N_z\right)\leq \rho\left(N_0\right)+\theta$ if $\left|z\right|\cdot\left\|\varphi\right\|<\varkappa$. 
Thus, the maximum principle and choosing $\delta\coloneqq \theta$ yield for $\left|z\right|\cdot\left\|\varphi\right\|<\varkappa$ that
\begin{align*}
 \upperleft{\varphi}{}{c}_z
 &\leq\max_{\left|\xi\right|=\rho\left(\upperleft{\varphi}{}{N}_0\right)+2\theta}\left\|R\left(\xi, \upperleft{\varphi}{}{N}_z\right)\right\|\\
 &\leq \max_{\left|\xi\right|=\rho\left(\upperleft{\varphi}{}{N}_0\right)+2\theta}\left\|R\left(\xi, N_0\right)\right\|+\max_{\left|\xi\right|=\rho\left(\upperleft{\varphi}{}{N}_0\right)+2\theta}\left\|R\left(\xi, \upperleft{\varphi}{}{N}_z\right)-R\left(\xi, N_0\right)\right\|. 
\end{align*}
Our choice of $\theta$ ensures that $\zeta$ belongs to the resolvent set of $\upperleft{\varphi}{}{N}_z$
if $\left|\xi\right|=\rho\left(N_0\right)+2\theta$.

We note here that \eqref{eq: hat d UV < norm} implies $\left\|N_z-N_0\right\|\geq \widehat{d}\left(\upperleft{\varphi}{}{N}_z,N_0\right)$.
In the following we combine Lemma \ref{lemma: distance of pertub}, 
the continuity statement of Lemma \ref{Kato IV 3.16} 
applied for the projection to $N$,
and Lemma \ref{lemma: resolvent cont}.
This implies that for all $\xi$ with $\left|\xi\right|=\rho\left(N_0\right)+2\theta$ there exists $\varkappa_\xi'>0$ such that $\left|z\right|\cdot\left\|\varphi\right\|<\varkappa_{\xi}'$ implies
$\left\|R\left(\xi, \upperleft{\varphi}{}{N}_z\right)-R\left(\xi, N_0\right)\right\|\leq 1$.

Since the set $\left\{\xi\colon \left|\xi\right|=\rho\left(N_0\right)+2\theta\right\}$ is compact,
we can set $\varkappa'\coloneqq \min_{\left|\xi\right|=\rho\left(N_0\right)+2\theta}\varkappa_{\xi}'>0$.
Hence, 
\begin{align*}
 \sup_{\left|z\right|\cdot\left\|\varphi\right\|<\varkappa'}\upperleft{\varphi}{}{c}_z
 &\leq \max_{\left|\xi\right|=\rho\left(\upperleft{\varphi}{}{N}_0\right)+2\theta}\left\|R\left(\xi, N_0\right)\right\|+1\eqqcolon \widetilde{c}. 
\end{align*}

Before we estimate $\left|\upperleft{\varphi}{}{\lambda}_z\right|^{-n}\cdot\left\|\upperleft{\varphi}{}{N}_z^n\right\|$
we remember that Lemma \ref{spect sep analyt} gives 
$\inf_{\left|z\right|<\varkappa/\left\|\varphi\right\|}\left|\upperleft{\varphi}{}{\lambda}_z\right|\geq 1-\theta$
and 
$\sup_{\left|z\right|<\varkappa/\left\|\varphi\right\|}\rho\left(\upperleft{\varphi}{}{N}_z^n\right)\leq \rho\left(N_0\right)+\theta$.
Applying this and the choice of $\delta=\theta$ on \eqref{eq: ||Nz^n||} yields for all $\left|z\right|\cdot\left\|\varphi\right\|<\varkappa'$
that
\begin{align*}
\left|\upperleft{\varphi}{}{\lambda}_z\right|^{-n}\cdot\left\|\upperleft{\varphi}{}{N}_z^n\right\|
&\leq \left|\lambda_z\right|^{-n}\cdot \widetilde{c}\cdot \left(\rho\left(N_z\right)+\delta\right)^{n+1}
\leq \left(1-\theta\right)^{-n}\cdot \widetilde{c}\cdot \left(\rho\left(N_0\right)+2\theta\right)^{n+1}\\
&=\left(\frac{\rho\left(N_0\right)+2\theta}{1-\theta}\right)^n\cdot \widetilde{c}\cdot\left(\rho\left(N_0\right)+2\theta\right).
\end{align*}
Since by construction $\rho\left(N_0\right)+2\theta<1-\theta$, there exists $N\in\N$ such that for all $n> N$ 
\begin{align*}
 \left(\frac{\rho\left(N_0\right)+2\theta}{1-\theta}\right)^n<\frac{1}{\widetilde{c}\cdot\left(\rho\left(N_0\right)+2\theta\right)}
\end{align*}
and thus
$\left|\lambda_z\right|^{-n}\cdot\left\|N_z^n\right\|<1$, i.e.\ the statement of the lemma. 
\end{proof}

\subsection{Regular variation}\label{subsec: reg var intro}
In this section we give the definition of the de Bruijn conjugate, needed
in Theorem \ref{Sb(n)} and some technical lemmas used for the regular variation setting.
We start with the de Bruijn conjugate.
\begin{Def}
Let $L$ be a slowly varying function at infinity.
If the function $L^{\#}$ is slowly varying at infinity and fulfills
the following convergences 
\begin{align*}
\lim_{x\rightarrow\infty}L\left(x\right)\cdot L^{\#}\left(xL\left(x\right)\right)  =1=
\lim_{x\rightarrow\infty}L^{\#}\left(x\right)\cdot L\left(xL^{\#}\left(x\right)\right),
\end{align*}
it is called the {\em de Bruijn conjugate} of $L$. 
\end{Def}
The de Bruijn conjugate always exists and is unique up to asymptotic
equi\-va\-len\-ce. For further information see \cite[Section 1.5.7 and Appendix 5]{bingham_regular_1987}.
With the notion of the de Bruijn conjugate we are able to give asymptotic inverses for regularly varying functions
given in the next two lemmas.
\begin{lemma}
\label{bingham} Let $\gamma,\delta>0$. If $f:\mathbb{R}^{+}\rightarrow\mathbb{R}^{+}$
is such that $f\left(x\right)=x^{\gamma\cdot\delta}L^{\gamma}\left(x^{\delta}\right)$,
where $L$ denotes a slowly varying function in infinity, then any
function $g:\mathbb{R}^{+}\rightarrow\mathbb{R}^{+}$ with 
\begin{align}
g\left(x\right)\sim x^{1/\left(\gamma\cdot\delta\right)}L^{\#1/\delta}\left(x^{1/\gamma}\right)\label{asymp inv}
\end{align}
is the asymptotic inverse of $f$, i.e.\ $f\left(g\left(x\right)\right)\sim g\left(f\left(x\right)\right)\sim x$
for $x$ tending to infinity. One version of $g$ is $f^{\leftarrow}$. 
\end{lemma}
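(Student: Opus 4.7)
The plan is to reduce the claim to the defining property of the de Bruijn conjugate (Definition \ref{de Bruijn}), which asserts that if $L$ is slowly varying then $y\mapsto yL(y)$ has asymptotic inverse $z\mapsto zL^{\#}(z)$. Since $L^{\gamma}(x^{\delta})$ is slowly varying at infinity, $f$ is regularly varying of positive index $\gamma\delta$, hence an asymptotic inverse exists and is unique up to asymptotic equivalence. Therefore it suffices to exhibit one admissible candidate $g$ fulfilling $f(g(x))\sim x$ and to check that any function satisfying \eqref{asymp inv} agrees with it asymptotically.

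I would then carry out two successive substitutions. First set $h(y):=yL(y)$ so that $f(x)=\bigl(x^{\delta}L(x^{\delta})\bigr)^{\gamma}=h(x^{\delta})^{\gamma}$. Solving $f(g(x))=x$ formally amounts to $h(g(x)^{\delta})^{\gamma}=x$, i.e.\ $g(x)^{\delta}=h^{\leftarrow}(x^{1/\gamma})$. Applying the basic de Bruijn formula to $h$ yields $h^{\leftarrow}(z)\sim zL^{\#}(z)$, and taking the $(1/\delta)$-th root gives
\begin{equation*}
g(x)\sim \left(x^{1/\gamma}L^{\#}(x^{1/\gamma})\right)^{1/\delta}=x^{1/(\gamma\delta)}\,\bigl(L^{\#}(x^{1/\gamma})\bigr)^{1/\delta},
\end{equation*}
which is exactly \eqref{asymp inv}.

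To convert this heuristic into a proof one must justify each substitution under genuine asymptotic equivalence, and this is where the main care is required. The critical tool is the uniform convergence theorem for slowly varying functions, which ensures that $L(a(x))\sim L(b(x))$ whenever $a(x)\sim b(x)\to\infty$. Using it one checks that any $g$ satisfying \eqref{asymp inv} fulfills $g(x)^{\delta}\sim x^{1/\gamma}L^{\#}(x^{1/\gamma})$, and then the defining asymptotic $L(x)\,L^{\#}(xL(x))\to 1$ from Definition \ref{de Bruijn} yields $h(g(x)^{\delta})\sim x^{1/\gamma}$, whence $f(g(x))=h(g(x)^{\delta})^{\gamma}\sim x$. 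The complementary equivalence $g(f(x))\sim x$ follows in the same way from $L^{\#}(x)\,L(xL^{\#}(x))\to 1$. Finally, that $f^{\leftarrow}$ is one admissible version of $g$ is a general fact about asymptotic inverses of regularly varying functions and can be read off from the above representation. The genuine obstacle throughout is not the algebraic manipulation but the uniformity of the slowly varying limits needed to substitute $g(x)^{\delta}$ inside $L$, so I would invoke the uniform convergence theorem explicitly at each such step.
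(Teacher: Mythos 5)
Your verification of the two composition relations is correct, and it in fact amounts to a self-contained proof of the statement the paper simply cites from Bingham--Goldie--Teugels (Proposition 1.5.15): writing $f(x)=\left(x^{\delta}L(x^{\delta})\right)^{\gamma}=h(x^{\delta})^{\gamma}$ with $h(y)=yL(y)$ and combining the uniform convergence theorem with the two limits in Definition \ref{de Bruijn} does give $f(g(x))\sim x$ and $g(f(x))\sim x$ for every $g$ satisfying \eqref{asymp inv}. (A minor slip: the relation needed for $f(g(x))\sim x$ is $L^{\#}(z)\,L(zL^{\#}(z))\to 1$ with $z=x^{1/\gamma}$, while $g(f(x))\sim x$ uses $L(y)\,L^{\#}(yL(y))\to 1$ with $y=x^{\delta}$; you quoted them the other way round, but both limits are part of Definition \ref{de Bruijn}, so nothing is lost.)

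The genuine gap is the final claim of the lemma, which you dispose of with ``a general fact about asymptotic inverses of regularly varying functions and can be read off from the above representation.'' It cannot be read off. The general fact available in the literature (and invoked by the paper, namely Proposition 1.5.12 of Bingham--Goldie--Teugels) says that one version of the asymptotic inverse is $h(y)\coloneqq\inf\left\{x\colon f(x)>y\right\}$, with strict inequality, whereas the paper's generalized inverse is $f^{\leftarrow}(y)=\inf\left\{x\colon f(x)\geq y\right\}$; since $f$ is not assumed monotone or continuous, these two infima need not coincide pointwise, and your representation of $g$ says nothing about either of them. One must actually prove $f^{\leftarrow}\sim h$ (or verify the composition relations for $f^{\leftarrow}$ directly), and this genuinely uses the regular variation of $f$ with positive index: the paper argues by contradiction, noting that $f^{\leftarrow}\leq h$, that a failure of asymptotic equivalence would produce arbitrarily large $x$ and $y$ with $f(x)\geq y\geq f\left((1+\epsilon)x\right)$, and that this is impossible because $f\left((1+\epsilon)x\right)\sim(1+\epsilon)^{\gamma\delta}f(x)>f(x)$ eventually. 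This short argument is essentially the only original content of the paper's proof, and it is exactly the step your proposal omits.
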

\begin{proof}  \cite[Proposition 1.5.15]{bingham_regular_1987}
states that \eqref{asymp inv} is the asymptotic inverse of $f$.
Proposition 1.5.12 of the same book states that $h$ with $h\left(y\right)\coloneqq\inf\left\{ x\in\left[0,\infty\right)\colon f\left(x\right)>y\right\} $
is one version of the asymptotic inverse and the asymptotic inverse
is unique up to asymptotic equivalence. So we are left to show that $h\left(y\right)\sim f^{\leftarrow}\left(y\right)$.
We assume the contrary of the statement. Obviously, $f^{\leftarrow}\leq h$.
Then there exists $\epsilon>0$ such that $f^{\leftarrow}\left(y\right)\left(1+\epsilon\right)\leq\inf\left\{ x\in\left[0,\infty\right)\colon f\left(x\right)>y\right\} $
for arbitrarily large $y$. This implies 
\begin{align*}
\inf\left\{ x\in\left[0,\infty\right)\colon f\left(x\right)=y\right\} \left(1+\epsilon\right)\leq\inf\left\{ x\in\left[0,\infty\right)\colon f\left(x\right)>y\right\} 
\end{align*}
for arbitrarily large $y$. That means there exist arbitrarily large $y\in\left[0,\infty\right)$
and $x\in\left[0,\infty\right)$ fulfilling
\begin{align}
f\left(x\right)\geq y\geq f\left(\left(1+\epsilon\right)x\right).\label{eq: f(1+esp) f}
\end{align}

On the other hand we have that $f\left(x\right)=x^{\gamma\delta}L^{\gamma}\left(x^{\delta}\right)$
and 
\begin{align*}
f\left(\left(1+\epsilon\right)x\right) & =\left(1+\epsilon\right)^{\gamma\delta}x^{\gamma\delta}L^{\gamma}\left(\left(1+\epsilon\right)^{\delta}x^{\delta}\right)
 \sim\left(1+\epsilon\right)^{\gamma\delta}x^{\gamma\delta}L^{\gamma}\left(x^{\delta}\right)
 =\left(1+\epsilon\right)^{\gamma\delta}f\left(x\right).
\end{align*}
This implies $f\left(\left(1+\epsilon\right)x\right)\geq\left(1+\epsilon/2\right)^{\gamma\delta}f\left(x\right)$,
for $x$ sufficiently large. Since $f$ tends to infinity, this contradicts \eqref{eq: f(1+esp) f} and hence 
the assumption. 
\end{proof}

We conclude this section with calculating the expectation of the truncated observable $\upperleft{u_{n}}{}{\chi}$.
\begin{lemma}
\label{E V X*} Let $\chi$ be such that $\mu(\chi>x)=L(x)/x^{\alpha}$ with $L$ a slowly varying function and $0<\alpha<1$.
Further, let $\left(u_{n}\right)$ be a non-negative sequence
with $\lim_{n\rightarrow\infty}u_{n}=\infty$. Then 
\begin{align}
\int\upperleft{u_{n}}{}{\chi}\mathrm{d}\mu & \sim\frac{\alpha}{1-\alpha}\cdot u_{n}^{1-\alpha}\cdot L\left(u_{n}\right),\label{E X*}\\
\int\mathsf{T}_{n}^{u_{n}}\chi\mathrm{d}\mu & \sim n\cdot \frac{\alpha}{1-\alpha}\cdot u_{n}^{1-\alpha}\cdot L\left(u_{n}\right),\label{E S*}
\end{align}
\end{lemma}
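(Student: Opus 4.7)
The plan is to reduce both asymptotics to a single tail-integral computation. Since $\mu$ is $T$-invariant, one has $\int \upperleft{u_n}{}{\chi}\circ T^{k-1}\,\mathrm{d}\mu = \int \upperleft{u_n}{}{\chi}\,\mathrm{d}\mu$ for every $k$, so that
\begin{align*}
\int \mathsf{T}_n^{u_n}\chi\,\mathrm{d}\mu = n\cdot\int \upperleft{u_n}{}{\chi}\,\mathrm{d}\mu,
\end{align*}
and \eqref{E S*} follows immediately once \eqref{E X*} is proved. Thus the whole task reduces to establishing \eqref{E X*}.

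For this I would use the layer-cake / tail representation. Writing $\bar F(x) \coloneqq \mu(\chi > x) = L(x)/x^{\alpha}$, Fubini's theorem (applied to the non-negative function $\chi\cdot\mathbbm{1}_{\{\chi\le u_n\}}$) gives
\begin{align*}
\int \upperleft{u_n}{}{\chi}\,\mathrm{d}\mu = \int_0^{u_n}\mu(\chi > x)\,\mathrm{d}x - u_n\cdot \mu(\chi > u_n) = \int_0^{u_n}\bar F(x)\,\mathrm{d}x - u_n^{1-\alpha}L(u_n).
\end{align*}
Since $0<\alpha<1$, the function $\bar F$ is regularly varying at infinity with index $-\alpha \in (-1,0)$, so Karamata's theorem (see e.g.\ \cite[Proposition 1.5.8]{bingham_regular_1987}) yields
\begin{align*}
\int_0^{u_n}\bar F(x)\,\mathrm{d}x \sim \frac{u_n\cdot\bar F(u_n)}{1-\alpha} = \frac{u_n^{1-\alpha}L(u_n)}{1-\alpha}.
\end{align*}
Subtracting the boundary term $u_n^{1-\alpha}L(u_n)$ and simplifying produces the prefactor $1/(1-\alpha) - 1 = \alpha/(1-\alpha)$, which is precisely the constant in \eqref{E X*}.

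Essentially no step here is an obstacle: the only subtlety is verifying that the boundary term $u_n\cdot\bar F(u_n)$ is genuinely of lower order compared with the Karamata main term only up to the explicit constant, rather than of strictly smaller order; this is why the final constant is $\alpha/(1-\alpha)$ rather than $1/(1-\alpha)$. Note that the condition $u_n\to\infty$ is used precisely so that Karamata's asymptotic can be applied, and the assumption $0<\alpha<1$ is what guarantees the tail integral is finite and dominated by the behaviour near $u_n$ rather than near $0$. The passage from \eqref{E X*} to \eqref{E S*} is then an immediate consequence of $T$-invariance, as noted above.
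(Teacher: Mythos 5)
Your proposal is correct and follows essentially the same route as the paper: both reduce \eqref{E X*} to the identity $\int\upperleft{u_n}{}{\chi}\,\mathrm{d}\mu=\int_0^{u_n}\mu(\chi>x)\,\mathrm{d}x-u_n^{1-\alpha}L(u_n)$ and then apply Karamata's theorem, with the subtraction valid because the limiting constant $\tfrac{1}{1-\alpha}-1=\tfrac{\alpha}{1-\alpha}$ is positive. The only cosmetic difference is that you obtain the identity via the layer-cake/Fubini representation while the paper uses Stieltjes integration by parts on $\int_0^{u_n}x\,\mathrm{d}F(x)$; the passage to \eqref{E S*} via $T$-invariance is the same in both.
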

\begin{proof} First note that \eqref{E S*} follows immediately from \eqref{E X*}. To see \eqref{E X*} note 
\begin{align}
\int\upperleft{u_{n}}{}{\chi}\mathrm{d}\mu & =\int_{0}^{u_{n}}x\mathrm{d}F\left(x\right)=\left[xF\left(x\right)\right]_{0}^{u_{n}}-\int_{0}^{u_{n}}F\left(x\right)\mathrm{d}x\notag\\
 & =\left[x\left(1-\frac{L\left(x\right)}{x^{\alpha}}\right)\right]_{0}^{u_{n}}-\int_{0}^{u_{n}}\left(1-\frac{L\left(x\right)}{x^{\alpha}}\right)\mathrm{d}x\notag\\
 & =u_{n}-\left[x^{1-\alpha}L\left(x\right)\right]_{0}^{u_{n}}-u_{n}+\int_{0}^{u_{n}}\frac{L\left(x\right)}{x^{\alpha}}\mathrm{d}x\notag\\
 & =\int_{0}^{u_{n}}\frac{L\left(x\right)}{x^{\alpha}}\mathrm{d}x-\left[x^{1-\alpha}L\left(x\right)\right]_{0}^{u_{n}}.\label{E X* est}
\end{align}
Also, $\left[x^{1-\alpha}L\left(x\right)\right]_{0}^{u_{n}}=u_{n}^{1-\alpha}L\left(u_{n}\right)$.
To estimate the first summand of \eqref{E X* est} we apply Karamata's
theorem, see for example \cite[Theorem 1.5.11]{bingham_regular_1987},
and obtain 
\begin{align*}
\int_{0}^{u_{n}}\frac{L\left(x\right)}{x^{\alpha}}\mathrm{d}x\sim\frac{1}{1-\alpha}u_{n}^{1-\alpha}L\left(u_{n}\right).
\end{align*}
Hence, 
\begin{align*}
\int\upperleft{u_{n}}{}{\chi}\mathrm{d}\mu & \sim\left(\frac{1}{1-\alpha}-1\right)\cdot u_{n}^{1-\alpha}\cdot L\left(u_{n}\right)
 =\frac{\alpha}{1-\alpha}\cdot u_{n}^{1-\alpha}\cdot L\left(u_{n}\right).
\end{align*} 
\eqref{E S*} follows immediately from \eqref{E X*} by noting that 
$\int\mathsf{T}_{n}^{u_{n}}\chi\mathrm{d}\mu=n\cdot \int\upperleft{u_{n}}{}{\chi}\mathrm{d}\mu$.
\end{proof}

\section{Proofs of main theorems}\label{proofs transfer prelim}
\subsection{Proof of the exponential inequality}\label{subsec: exp ineq}
We will give here the proof of Lemma \ref{lemma: Tnfn chi deviation allg} in a series of lemmas. The proof applies the analytic perturbation theory 
discussed in Section \ref{subsec: analytic perturbation}, particularly Lemma \ref{spect sep analyt} and Lemma \ref{lemma: resolvent}.
We start with a generalization of Doob's inequality in order 
to estimate $\mu\left(\max_{i\leq n}\left|\mathsf{S}_i\varphi_n\right|\geq  u_n\right)$ in Lemma \ref{lemma 2 2nd part}.
\begin{lemma}[Generalized inequality of Doob]\label{doob ineq}
Let $\left(M_n\right)_{n\in\mathbb{N}}$ be a martingale, then we have for every convex function $g:\mathbb{R}\to\mathbb{R}$ and every $x>0$ that 
\begin{align}
\mathbb{P}\left(\max_{1\leq i\leq n}M_i\geq x\right)\leq\frac{\mathbb{E}\left(g\left(\left|M_n\right|\right)\right)}{g\left(x\right)}.\label{eq: doob}
\end{align}
\end{lemma}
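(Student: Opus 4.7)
The plan is to reduce the inequality to the classical Doob maximal inequality applied to an appropriately constructed non-negative submartingale. First I would note that since $(M_n)$ is a martingale, the process $(|M_n|)$ is a non-negative submartingale by conditional Jensen's inequality applied to the convex function $y\mapsto|y|$. Next, since in all intended applications of this lemma (for instance $g(y)=\exp(ty)$ with $t>0$, as will be used in the subsequent exponential inequality) $g$ is convex \emph{and} non-decreasing on $[0,\infty)$, the composition $(g(|M_n|))$ is again a non-negative submartingale: conditional Jensen gives
\begin{align*}
\mathbb{E}\bigl(g(|M_{n+1}|)\mid\mathcal{F}_n\bigr)\geq g\bigl(\mathbb{E}(|M_{n+1}|\mid\mathcal{F}_n)\bigr)\geq g(|M_n|),
\end{align*}
where the second step combines monotonicity of $g$ with the submartingale property of $(|M_n|)$.

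Then I would apply the classical Doob maximal inequality to the non-negative submartingale $(g(|M_i|))_{1\leq i\leq n}$ at level $g(x)>0$, obtaining
\begin{align*}
\mathbb{P}\left(\max_{1\leq i\leq n}g(|M_i|)\geq g(x)\right)\leq\frac{\mathbb{E}(g(|M_n|))}{g(x)}.
\end{align*}
The proof is concluded by the elementary event inclusion
\begin{align*}
\Bigl\{\max_{1\leq i\leq n}M_i\geq x\Bigr\}\subseteq\Bigl\{\max_{1\leq i\leq n}|M_i|\geq x\Bigr\}\subseteq\Bigl\{\max_{1\leq i\leq n}g(|M_i|)\geq g(x)\Bigr\},
\end{align*}
valid since $g$ is non-decreasing on $[0,\infty)$, which together with the previous display yields \eqref{eq: doob}.

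Since this is merely a packaging of the classical Doob submartingale maximal inequality, no genuine obstacle arises. The only point worth making explicit is the precise side conditions on $g$ needed for both the submartingale property of $(g(|M_n|))$ and the event inclusion to hold simultaneously, namely convexity together with monotonicity (and hence non-negativity up to an irrelevant constant) on $[0,\infty)$; in the intended application to $g(y)=\exp(ty)$ all these conditions are automatic.
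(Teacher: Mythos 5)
Your proof is correct and follows essentially the same route as the paper's: both pass to the non-negative submartingale $\left(g\left(\left|M_i\right|\right)\right)_{i\leq n}$ and apply Doob's maximal inequality at the level $g\left(x\right)$. The only difference is that you make explicit the monotonicity of $g$ on $\left[0,\infty\right)$ required both for the submartingale property and for the event inclusion $\left\{\max_{i\leq n}M_i\geq x\right\}\subseteq\left\{\max_{i\leq n}g\left(\left|M_i\right|\right)\geq g\left(x\right)\right\}$, a hypothesis the paper leaves implicit but which holds in its only application, $g\left(y\right)=\exp\left(t\cdot y\right)$ with $t>0$.
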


\begin{proof}
The proof of this theorem is the same as of the usual inequality of Doob (see \cite[Theorem 11.2]{klenke_probability_2007} for example) using a general convex function instead of $g\left(x\right)=x^p$,
i.e.\ by \cite[Theorem 9.35]{klenke_probability_2007} we have that $\left(g\left(M_n\right)\right)_{n\in\mathbb{N}}$ is a submartingale 
if $\left(M_n\right)_{n\in\mathbb{N}}$ is a martingale, $g\colon\mathbb{R}\to\mathbb{R}$ is convex, and $\mathbb{E}\,\big(g\left(M_n\right)^+\big)<\infty$, for all $n\in\mathbb{N}$. 
We can assume the last inequality here. Otherwise the inequality in \eqref{eq: doob} holds trivially.
Furthermore, \cite[Lemma 11.1]{klenke_probability_2007} states that for a submartingale $\left(X_n\right)$ and $y>0$ we have by setting $X_n^*\coloneqq\sup_{k\leq n}\left|X_k\right|$ that
$y\cdot \mathbb{P}\left(\left|X_n^*\right|\geq y\right)\leq\mathbb{E}\left(\left|X_n\right|\cdot \mathbbm{1}_{\left\{X_n^*\geq y\right\}}\right)$.
Setting $y\coloneqq g\left(x\right)$ and $X_n\coloneqq g\left(\left|M_n\right|\right)$ yields the statement of the lemma.
\end{proof}

\begin{lemma}\label{lemma 2 2nd part}
Let $\left(\Omega, \mathcal{A},  T, \mu,\mathcal{F},\left\|\cdot\right\|\right)$ fulfill Property $\mathfrak{C}$. 
Then there exists $C>0$ such that
for all $\varphi\in\mathcal{F}$ fulfilling $\int\varphi\mathrm{d}\mu=0$,
all $n\in\N$, and all $t>0$
\begin{align}
\MoveEqLeft\mu\left(\left|\max_{1\leq j\leq n}\mathsf{S}_j\varphi\right|>u\right)
\leq 2\exp\left(t\cdot \left(-u+C\cdot\left\|\varphi\right\|\right)\right)\cdot \int\exp\left(t\cdot \mathsf{S}_{n}\varphi\right)\mathrm{d}\mu.\label{eq: first estim after Markov}
\end{align}
\end{lemma}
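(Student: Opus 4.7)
The plan is to reduce the non-martingale process $(\mathsf{S}_j\varphi)_j$ to a genuine martingale via a Gordin-type martingale--coboundary decomposition of $\varphi$, apply the generalized Doob inequality (Lemma \ref{doob ineq}) with the convex function $\Phi(x)=\mathrm{e}^{tx}$, and then rewrite the resulting bound in terms of $\int\exp(t\cdot\mathsf{S}_n\varphi)\,\mathrm{d}\mu$ at the price of a coboundary error proportional to $\|\varphi\|$.

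First I will invoke the spectral gap. Since $\int\varphi\,\mathrm{d}\mu=0$, Lemma \ref{Pf=intf} gives $P\varphi=0$, so $\varphi$ lies in the subspace on which $\widehat T$ contracts geometrically with rate $\overline{\tau}<1$ (Lemma \ref{spec gap}). By Gelfand's formula, applied uniformly on this subspace, there exist $c>0$ and $\tau\in(\overline{\tau},1)$ independent of $\varphi$ such that $\|\widehat T^k\varphi\|\leq c\tau^k\|\varphi\|$ for all $k\geq 1$. Consequently $h\coloneqq-\sum_{k\geq 1}\widehat T^k\varphi$ converges in $\mathcal F$ with $\|h\|\leq C_0\|\varphi\|$ where $C_0\coloneqq c\tau/(1-\tau)$, and setting $g\coloneqq\varphi-h+h\circ T$ one checks (using $\widehat T(f\circ T)=f\cdot\widehat T 1=f$) that $\widehat T g=0$ and $\varphi=g+h-h\circ T$. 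Telescoping yields $\mathsf{S}_j\varphi=\mathsf{S}_j g+h-h\circ T^j$, whence by \eqref{ineq <l} we get $|\mathsf{S}_j\varphi-\mathsf{S}_j g|\leq 2C_0\|\varphi\|$ uniformly in $1\leq j\leq n$.

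Next I will construct the auxiliary martingale and apply Doob. With the increasing filtration $\mathcal G_i\coloneqq T^{-(n-i)}\mathcal A$, the identity $\mathbb E[f\mid T^{-m}\mathcal A]=(\widehat T^m f)\circ T^m$ together with $\widehat T g=0$ shows that $(g\circ T^{n-i})_{i=1}^n$ is a $(\mathcal G_i)$-martingale-difference sequence, so $\widetilde M_i\coloneqq\sum_{k=1}^{i} g\circ T^{n-k}$ is a martingale with $\widetilde M_0=0$ and $\widetilde M_n=\mathsf{S}_n g$. Reindexing gives $\mathsf{S}_j g=\widetilde M_n-\widetilde M_{n-j}$, so by the triangle inequality and the previous step,
\[
\max_{1\leq j\leq n}\bigl|\mathsf{S}_j\varphi\bigr|\leq 2C_0\|\varphi\|+\bigl|\widetilde M_n\bigr|+\max_{0\leq i\leq n}\bigl|\widetilde M_i\bigr|.
\]
Because $(\mathrm{e}^{t\widetilde M_i})$ is a non-negative submartingale, Lemma \ref{doob ineq} applied with $\Phi(x)=\mathrm{e}^{tx}$ to both $\widetilde M$ and $-\widetilde M$, combined with a direct Markov estimate for $|\widetilde M_n|$, controls the right-hand side by a constant multiple of $\int\mathrm{e}^{t|\widetilde M_n|}\,\mathrm{d}\mu$.

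Finally I will translate back: since $\widetilde M_n=\mathsf{S}_n g=\mathsf{S}_n\varphi-h+h\circ T^n$ with $|h-h\circ T^n|\leq 2C_0\|\varphi\|$, the integrand $\mathrm{e}^{t|\widetilde M_n|}$ is bounded by $\mathrm{e}^{2tC_0\|\varphi\|}\mathrm{e}^{t\mathsf{S}_n\varphi}$ on the relevant event, so absorbing all $\|\varphi\|$-dependent factors into a single constant $C$ produces the claimed inequality with universal prefactor $2$. The main technical obstacle will be guaranteeing \emph{uniformity} of the Gordin constant $C_0$ in $\varphi$; this is precisely what the spectral gap supplies via Lemmas \ref{spec gap} and \ref{Pf=intf}, and is why the argument cannot be run under a bare mixing assumption.
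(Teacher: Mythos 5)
Your skeleton is the paper's: you perform the same martingale--coboundary (Gordin) decomposition that the paper builds from $\mathsf{R}\varphi=\sum_{k\geq1}\widehat{T}^k\varphi$ (your $h$ is $-\mathsf{R}\varphi$ and your $g$ is the paper's $h$), you bound the coboundary by a constant times $\left\|\varphi\right\|$ uniformly in $\varphi$ (you via Gelfand's formula applied to $N=\widehat{T}(\id-P)$, the paper via the decay of correlations of Lemma \ref{decay corr}; both are fine and both rest on the spectral gap), and you then invoke the generalized Doob inequality (Lemma \ref{doob ineq}) with $g(x)=\mathrm{e}^{tx}$. You are in fact more careful than the paper about the direction of time: the paper simply declares $(\mathsf{S}_jh)_j$ to be a martingale, whereas only your backward-indexed process $\widetilde M_i=\sum_{k=1}^{i}g\circ T^{n-k}$ with the increasing filtration $\mathcal G_i=T^{-(n-i)}\mathcal A$ is one, and $\mathsf{S}_jg=\widetilde M_n-\widetilde M_{n-j}$.

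However, your last two steps do not yield the inequality as stated. First, passing to $\max_j\left|\mathsf{S}_j\varphi\right|\leq 2C_0\left\|\varphi\right\|+\left|\widetilde M_n\right|+\max_i\left|\widetilde M_i\right|$ and estimating the two random terms separately forces you to split the threshold $u-2C_0\left\|\varphi\right\|$ between them; any such split (or the bound by $2\max_i\left|\widetilde M_i\right|$) produces decay of order $\mathrm{e}^{-tu/2}$, not the full $\mathrm{e}^{-tu}$ with prefactor $2$ claimed in the lemma, and this loss sits in the $u$-dependence, so it cannot be absorbed into the constant $C$ multiplying $\left\|\varphi\right\|$. Second, the concluding domination ``$\mathrm{e}^{t\left|\widetilde M_n\right|}\leq \mathrm{e}^{2tC_0\left\|\varphi\right\|}\mathrm{e}^{t\mathsf{S}_n\varphi}$ on the relevant event'' is not valid: on the set where $\widetilde M_n<0$ the absolute value produces $\mathrm{e}^{-t\mathsf{S}_ng}$, whose integral is comparable to $\int\mathrm{e}^{-t\mathsf{S}_n\varphi}\mathrm{d}\mu$ and not to $\int\mathrm{e}^{t\mathsf{S}_n\varphi}\mathrm{d}\mu$, and since the Doob/Markov bounds integrate over all of $\Omega$ you cannot restrict to any event. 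The way to stay within the stated right-hand side is to avoid absolute values altogether: $\mathrm{e}^{t\widetilde M_i}$ is a nonnegative submartingale, so Doob gives $\mu\left(\max_i\widetilde M_i\geq w\right)\leq \mathrm{e}^{-tw}\int\mathrm{e}^{t\mathsf{S}_ng}\mathrm{d}\mu$ with the signed terminal value $\mathsf{S}_ng=\mathsf{S}_n\varphi+O\left(\left\|\varphi\right\|\right)$ in the exponent; what then remains is to control $\max_j\mathsf{S}_jg=\widetilde M_n-\min_{0\leq i\leq n-1}\widetilde M_i$ by such a one-sided quantity with the full exponent --- exactly the point the paper glosses over by treating the forward sums as a martingale --- and your sketch supplies no argument for this that preserves the stated constants.
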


\begin{proof}
First define $\mathsf{R}\coloneqq\sum_{n=1}^\infty\widehat{ T}^n$.
We will now estimate $\mathsf{R}\varphi$ for $\int\varphi\mathrm{d}\mu=0$.
We notice that for all $A\in\mathcal{A}$ we have
\begin{align*}
\int\widehat{ T}^n\varphi\cdot \mathbbm{1}_A \mathrm{d}\mu&=\int\varphi\cdot \left(\mathbbm{1}_A\circ  T^n\right) \mathrm{d}\mu=\int\varphi\cdot \left(\mathbbm{1}_A\circ  T^n\right) \mathrm{d}\mu-\int\varphi\mathrm{d}\mu\cdot \int\mathbbm{1}_A \mathrm{d}\mu
\end{align*}
and from Lemma \ref{decay corr} it follows that there exist $ R>0$ and $0<\tau<1$ such that
$\int\widehat{ T}^n\varphi\cdot \mathbbm{1}_A \mathrm{d}\mu\leq  R\cdot \tau^n \cdot\left\|\varphi\right\|\cdot \left|\mathbbm{1}_A\right|_1$.
We set $ C\coloneqq 4 R\cdot\sum_{n=1}^{\infty} \tau^n$ and have
$\int \mathsf{R}\varphi\mathbbm{1}_A\mathrm{d}\mu\leq C/4\cdot\left\|\varphi\right\|\cdot\left|\mathbbm{1}_A\right|_1<\infty$,
for all $A\in\mathcal{A}$. Thus,
\begin{align}
\left|\mathsf{R}\varphi\right|_{\infty}\leq C/2\cdot\left\|\varphi\right\|<\infty.\label{R varphi infty}
\end{align}

Let $h\coloneqq \varphi + \mathsf{R}\varphi - \left(\mathsf{R}\varphi\right)\circ  T$. 
From \eqref{R varphi infty} and $\left|\cdot\right|_2\leq \left\|\cdot\right\|$ we have that $h\in \mathcal{L}^2$. 
By construction we have that $\mathsf{S}_nh = \sum_{j=0}^{n-1} h\circ  T^j$ is a martingale. 
Further, $\varphi = h + \left(\mathsf{R}\varphi\right)\circ  T - \mathsf{R}\varphi$ and thus,
\begin{align*}
\mathsf{S}_n\varphi &= \mathsf{S}_n h + \sum_{j=0}^{n-1} \left(\left(\mathsf{R}\varphi\right)\circ  T^{j+1} - \left(\mathsf{R}\varphi\right)\circ  T^j\right) 
= \mathsf{S}_nh+\left(\mathsf{R}\varphi\right)\circ  T^{n}-\left(\mathsf{R}\varphi\right).
\end{align*}
We have that
\begin{align*}
\mu\left(\max_{1\leq j\leq n}\mathsf{S}_j\varphi\geq u\right)
&=\mu\left(\max_{1\leq j\leq n}\left(\mathsf{S}_jh+\left(\mathsf{R}\varphi\right)\circ  T^{j}-\mathsf{R}\varphi\right)\geq u\right)\\
&\leq \mu\left(\max_{1\leq j\leq n}\mathsf{S}_jh>u-2\left|\mathsf{R}\varphi\right|_{\infty}\right).
\end{align*}
Applying
 the generalized inequality of Doob, see Lemma \ref{doob ineq}, the fact that $\mathsf{S}_{n} h$ is a martingale,
and \eqref{R varphi infty} yields  
 for all $t>0$
\begin{align}
\mu\left(\max_{1\leq j\leq n}\mathsf{S}_j\varphi\geq u\right)
&\leq \exp\left(t\cdot \left(-u+C/2\cdot \left\|\varphi\right\|\right)\right)\cdot \int\exp\left(t\cdot \left|\mathsf{S}_{n}h\right|\right)\mathrm{d}\mu.\label{eq: mu max Sj}
\end{align}
Furthermore, we obtain by the definition of $h$ and \eqref{R varphi infty} for the second factor that 
\begin{align*}
 \int\exp\left(t\cdot \left|\mathsf{S}_{n}h\right|\right)\mathrm{d}\mu
 &=\int\exp\left(t\cdot \left(\mathsf{S}_{n}\varphi-\left(\mathsf{R}\varphi\right)\circ  T^{n}+\mathsf{R}\varphi\right)\right)\mathrm{d}\mu\\
 &\leq \int\exp\left(t\cdot \left(\mathsf{S}_{n}\varphi+ 2\left|\mathsf{R}\varphi\right|_{\infty}\right)\right)\mathrm{d}\mu\\
 &\leq \int\exp\left(t\cdot \mathsf{S}_{n}\varphi\right)\mathrm{d}\mu\cdot\exp\left(t\cdot C/2\left\|\varphi\right\|\right).
\end{align*}
Combining this with \eqref{eq: mu max Sj} yields for all $t>0$
\begin{align*}
\mu\left(\max_{1\leq j\leq n}\mathsf{S}_j\varphi\geq u\right)\leq  \exp\left(t\cdot \left(-u+C\cdot \left\|\varphi\right\|\right)\right)\cdot \int\exp\left(t\cdot \left|\mathsf{S}_{n}h\right|\right)\mathrm{d}\mu.
\end{align*}
Analogously, we obtain
\begin{align*}
\mu\left(\max_{1\leq j\leq n}-\mathsf{S}_j\varphi\geq u\right)&\leq  \exp\left(t\cdot \left(-u+C\cdot \left\|\varphi\right\|\right)\right)\cdot \int\exp\left(t\cdot \left|\mathsf{S}_{n}h\right|\right)\mathrm{d}\mu.
\end{align*}
Thus, the statement of the lemma follows. 
\end{proof}

Our next lemma estimates the second factor of \eqref{eq: first estim after Markov}.
\begin{lemma}\label{e Snphi}
Let $\left(\Omega, \mathcal{A},  T, \mu,\mathcal{F},\left\|\cdot\right\|\right)$ fulfill Property $\mathfrak{C}$.
Let $\lambda_z$ and $\varkappa'$ be given as in Lemma \ref{lemma: resolvent}.
Then there exist $K', N>0$ such that for all $\varphi\in\mathcal{F}$ fulfilling $\int\varphi\mathrm{d}\mu=0$, all $n\in\N_{>N}$, and all $t\in\left(0, \varkappa'/\left\|\varphi\right\|\right)$
\begin{align*}
\int\exp\left(t\cdot \mathsf{S}_n\varphi\right)\mathrm{d}\mu
&\leq K'\cdot  \left|\upperleft{\varphi}{}{\lambda}_t\right|^n.
\end{align*}
\end{lemma}
\begin{proof}
We will first show  
\begin{align}
\widehat{ T}_t^n h = \widehat{ T}^n\left(\mathrm{e}^{t\cdot \mathsf{S}_n\varphi}\cdot h\right)\label{hat CYRI z n}
\end{align}
by induction.
The base case is obvious. Hence, assume that \eqref{hat CYRI z n} holds for $n-1$. This in combination with \eqref{hat CYRI} yields for all $g\in\mathcal{L}^1$ that 
\begin{align*}
\int \widehat{ T}_t^n h\cdot g\mathrm{d}\mu
&= \int\widehat{ T}_t\left(\widehat{ T}^{n-1}\left(\mathrm{e}^{t\cdot \mathsf{S}_{n-1}\varphi}\cdot h\right)\right)\cdot g\mathrm{d}\mu\\
&= \int\widehat{ T}\left(\mathrm{e}^{t\cdot \varphi}\cdot\widehat{ T}^{n-1}\left(\mathrm{e}^{t\cdot \mathsf{S}_{n-1}\varphi}\cdot h\right)\right)\cdot g\mathrm{d}\mu\\
&= \int\mathrm{e}^{t\cdot \varphi}\cdot\widehat{ T}^{n-1}\left(\mathrm{e}^{t\cdot \mathsf{S}_{n-1}\varphi}\cdot h\right)\cdot \left(g\circ  T\right)\mathrm{d}\mu\\
&= \int\mathrm{e}^{t\cdot \mathsf{S}_{n-1}\varphi}\cdot h\cdot\left(\left(\mathrm{e}^{t\cdot \varphi}\cdot \left(g\circ  T\right)\right)\circ  T^{n-1}\right)\mathrm{d}\mu\\
&= \int\mathrm{e}^{t\cdot \mathsf{S}_{n-1}\varphi}\cdot h\cdot\mathrm{e}^{t\cdot \left(\varphi\circ  T^{n-1}\right)}\cdot \left(g\circ  T^n\right)\mathrm{d}\mu\\
&= \int\mathrm{e}^{t\cdot \mathsf{S}_{n}\varphi}\cdot h\cdot \left(g\circ  T^n\right)\mathrm{d}\mu= \int\widehat{ T}^n\left(\mathrm{e}^{t\cdot \mathsf{S}_{n}\varphi}\cdot h\right)\cdot g\mathrm{d}\mu.
\end{align*}

Since we have by Lemma \ref{spect sep analyt} that
$\widehat{ T}_t^n=\lambda_t^n P_t+N_t^n$,  
it follows that
\begin{align*}
\int\mathrm{e}^{t\cdot \mathsf{S}_n\varphi}\mathrm{d}\mu&=\int \widehat{ T}_t^n 1\mathrm{d}\mu=\int \left(\lambda_t^n P_t 1+N_t^n 1\right)\mathrm{d}\mu
\end{align*}
and thus using \eqref{ineq <l} yields
\begin{align}
\int\mathrm{e}^{t\cdot \mathsf{S}_n\varphi}\mathrm{d}\mu
&\leq \left|\lambda_t\right|^n \cdot \left|P_t 1\right|_{\infty}+\left\|N_t\right\|^n. \label{lambda z N z 0}
\end{align}
We observe by the separation of the spectrum given in Lemma \ref{spect sep analyt} that 
$\left|\lambda_t\right|>1-\theta\geq 1-\left(1-\rho(N_0)\right)/2\geq 1/2$.
Combining this with \eqref{ineq <l} and \eqref{C 0 f} and 
the assumption $\left|t\right|\cdot\left\|\varphi\right\| < \varkappa'$ yields
\begin{align}
\left|P_t1\right|_{\infty}&\leq \frac{\left|\widehat{ T}_t1\right|_{\infty}}{\left|\lambda_t\right|}
\leq 2\cdot\left\|\widehat{ T}\exp\left(\left\|\varphi\right\|\cdot \left|t\right|\right)\right\|
\leq 2\cdot K_0\cdot\exp\left(\left\|\varphi\right\|\cdot \left|t\right|\right)
\leq 2\cdot K_0\cdot\exp\left(\varkappa'\right).\label{Pz estim}
\end{align}
Combining this calculation with \eqref{lambda z N z 0}
yields 
\begin{align*}
 \int\exp\left(t\cdot \mathsf{S}_n\varphi\right)\mathrm{d}\mu
&\leq \left|\lambda_t\right|^n
\cdot \left(2\cdot K_0\cdot\exp\left(\varkappa'\right)+\left|\lambda_t\right|^{-n}\cdot \left\|N_t^n\right\|\right).
\end{align*}
Applying Lemma \ref{lemma: resolvent}
gives the statement of the lemma with $K'=2\cdot K_0\cdot\exp\left(\varkappa'\right)+1$.
\end{proof}

The next lemma gives an estimate of the $n$th derivative of $\lambda_t$ at $t=0$ which will help us later to estimate $\lambda_t$.
\begin{lemma}\label{lambda^n}
Let $\left(\Omega, \mathcal{A},  T, \mu,\mathcal{F},\left\|\cdot\right\|\right)$ fulfill Property $\mathfrak{C}$.
Let $\lambda_z$ be the eigenvalue of the perturbed transfer operator $\widehat{T}_z$ given in Lemma \ref{spect sep analyt}.
Then we have $\lambda_0^{\left(0\right)}=1,\lambda_0^{\left(1\right)}=0$, and for $n\geq 2$ there exists $\eta<\infty$ such that for all $\varphi\in\mathcal{F}$ with $\int \varphi\mathrm{d}\mu=0$
we have that
\begin{align}
\upperleft{\varphi}{}{\lambda}^{\left(n\right)}_0\leq n!\cdot \eta^n\cdot\left|\varphi\right|_1\cdot\left\|\varphi\right\|^{n-1}.\label{pi lambda}
\end{align}
\end{lemma}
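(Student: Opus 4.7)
The plan is to derive an explicit formula for $\lambda_z$ in terms of an integral against $\mu$, compute the first two Taylor coefficients (which turn out to be $1$ and $0$), and then apply the Cauchy integral formula on a circle whose radius shrinks like $1/\|\varphi\|$. The vanishing of $\lambda_0^{(1)}$, which crucially uses $\int\varphi\,\mathrm{d}\mu = 0$, is what lets us trade one factor of $\|\varphi\|$ for a factor of $\left|\varphi\right|_1$ in the final bound.

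First I would obtain a closed-form expression for $\lambda_z$. By Lemma \ref{spect sep analyt} the identity $N_zP_z = 0$ makes $P_z 1$ an eigenfunction of $\widehat{T}_z$ to the eigenvalue $\lambda_z$, so $\widehat{T}_z(P_z 1) = \lambda_z P_z 1$. Integrating against $\mu$ and using \eqref{hat CYRI} with $g\equiv 1$ to rewrite $\int\widehat{T}_z h\,\mathrm{d}\mu = \int\mathrm{e}^{z\varphi}h\,\mathrm{d}\mu$, we obtain
\begin{align*}
\lambda_z\cdot a(z) = b(z),\qquad a(z)\coloneqq\int P_z 1\,\mathrm{d}\mu,\quad b(z)\coloneqq\int\mathrm{e}^{z\varphi}P_z 1\,\mathrm{d}\mu.
\end{align*}
Since $P_0 1 = 1$ by Lemma \ref{Pf=intf}, both $a(0)$ and $b(0)$ equal $1$, so $\lambda_0^{(0)} = 1$. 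Differentiating the identity once at $z=0$ gives $\lambda_0^{(1)} + a'(0) = b'(0) = \int\varphi\,\mathrm{d}\mu + a'(0) = a'(0)$, hence $\lambda_0^{(1)} = 0$.

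Next I would apply Cauchy's integral formula. Choose $r\coloneqq\varkappa'/(2\|\varphi\|)$ with $\varkappa'$ from Lemma \ref{lemma: resolvent}, so that $\lambda_z$ is holomorphic on $|z|\leq r$ and the radius scales with $\varphi$ in a way that makes the estimate uniform. Because $\lambda_0^{(0)} = 1$ and $\lambda_0^{(1)} = 0$ annihilate the constant and linear Taylor contributions, for $n\geq 2$
\begin{align*}
\lambda_0^{(n)} = \frac{n!}{2\pi\mathrm{i}}\oint_{|z|=r}\frac{\lambda_z - 1}{z^{n+1}}\,\mathrm{d}z.
\end{align*}
On $|z|=r$ I would bound $|\lambda_z - 1|$ by writing $\lambda_z - 1 = \int(\mathrm{e}^{z\varphi} - 1)P_z 1\,\mathrm{d}\mu / a(z)$ and combining three uniform-in-$\varphi$ estimates: the elementary inequality $|\mathrm{e}^{z\varphi} - 1|\leq |z||\varphi|\mathrm{e}^{|z|\|\varphi\|}$ together with \eqref{ineq <l}; the pointwise bound $\left|P_z 1\right|_\infty \leq 2K_0\mathrm{e}^{\varkappa'}$ already proved in \eqref{Pz estim}; and the lower bound $a(z)\geq 1/2$, which follows from $|a(z) - 1|\leq \|P_z - P_0\|\cdot\|1\|$ together with Lemmas \ref{Kato IV 3.16} and \ref{lemma: distance of pertub} after shrinking $\varkappa'$ further if necessary. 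These combine to give $|\lambda_z - 1|\leq C'\cdot r\cdot\left|\varphi\right|_1$ on $|z|=r$ for some $C'$ independent of $\varphi$, whence
\begin{align*}
|\lambda_0^{(n)}|\leq \frac{n!}{r^n}\cdot C'\cdot r\cdot\left|\varphi\right|_1 = n!\cdot C'\cdot\left|\varphi\right|_1\cdot\left(\tfrac{2}{\varkappa'}\right)^{n-1}\|\varphi\|^{n-1},
\end{align*}
which is the claim with $\eta\coloneqq\max\{C',\,2/\varkappa'\}$.

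The main obstacle is keeping all constants ($\varkappa'$, $C'$, the bound on $\left|P_z 1\right|_\infty$, and the lower bound on $a(z)$) independent of $\varphi$; this is precisely why the perturbation results in Lemmas \ref{spect sep analyt} and \ref{lemma: resolvent} were formulated in the uniform form $|z|\cdot\|\varphi\|<\varkappa'$ rather than for a single fixed $\varphi$. Without such uniformity the Cauchy estimate would only yield $n!\eta^n\|\varphi\|^n$; the improvement to the factor $\left|\varphi\right|_1\|\varphi\|^{n-1}$ is traceable entirely to $\lambda_0^{(1)} = 0$, which in turn rests on the hypothesis $\int\varphi\,\mathrm{d}\mu = 0$.
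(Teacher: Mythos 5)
Your proof is correct, and it takes a genuinely different route from the paper. The paper works at the operator level: it differentiates the identity $\widehat{T}_zP_z=\lambda_zP_z$ $n$ times via Leibniz, applies the result to the constant function, integrates to obtain a recursion between $\lambda_0^{(n)}$, the lower-order derivatives $\lambda_0^{(j)}$, and the quantities $\int\varphi^i P_0^{(n-i)}1\,\mathrm{d}\mu$, bounds $\left|P_0^{(k)}1\right|_\infty$ by Cauchy's formula applied to $P_z$ (using the same uniform estimate \eqref{Pz estim} you invoke), and then closes the argument by induction on $n$. You instead extract the scalar identity $\lambda_z\int P_z1\,\mathrm{d}\mu=\int \mathrm{e}^{z\varphi}P_z1\,\mathrm{d}\mu$, which yields the closed expression $\lambda_z-1=\int(\mathrm{e}^{z\varphi}-1)P_z1\,\mathrm{d}\mu\,/\int P_z1\,\mathrm{d}\mu$, and apply a single Cauchy estimate to $\lambda_z-1$ on the circle $|z|=\varkappa'/(2\left\|\varphi\right\|)$. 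This avoids the combinatorial recursion and the induction entirely, at the price of one extra uniform ingredient the paper does not need: the lower bound $\left|\int P_z1\,\mathrm{d}\mu\right|\geq 1/2$, which you correctly obtain from the continuity of the spectral projection (Lemma \ref{Kato IV 3.16} combined with Lemma \ref{lemma: distance of pertub}), since both are phrased in terms of the product $|z|\cdot\left\|\varphi\right\|$ and therefore give a threshold independent of $\varphi$; your use of \eqref{Pz estim} and \eqref{ineq <l} is likewise uniform, matching how the paper itself reuses \eqref{Pz estim} for complex $z$.

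One small correction to your closing commentary, which does not affect the proof: in your argument the gain of the factor $\left|\varphi\right|_1$ in place of one factor $\left\|\varphi\right\|$ does not come from $\lambda_0^{(1)}=0$; it comes from subtracting the constant term ($\lambda_0=1$) together with the pointwise bound $|\mathrm{e}^{z\varphi}-1|\leq|z|\,|\varphi|\,\mathrm{e}^{|z|\left\|\varphi\right\|}$ integrated against $\mu$. Indeed your bound for $n\geq2$ never uses $\int\varphi\,\mathrm{d}\mu=0$; that hypothesis enters only in establishing the separate assertion $\lambda_0^{(1)}=0$ (as it does in the paper's $n=1$ step).
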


\begin{proof}
We assume in the sequel that $\left|z\right|\cdot\left\|\varphi\right\|<\varkappa$ with $\varkappa$ as in Lemma \ref{spect sep analyt}. 
By Lemma \ref{spect sep analyt} we have that 
$\widehat{ T}_z P_z=\left(\lambda_z P_z+N_z\right)P_z=\lambda_z P_z$ with all parts being analytic.  
Dif\-fe\-ren\-tia\-ting both sides  
$n$ times and using \eqref{Tzk} we get by induction that
\begin{align}
\sum_{i=0}^{n}\binom{n}{i}\widehat{ T}_z M_{\varphi}^iP_z^{\left(n-i\right)}
&=\sum_{j=0}^{n}\binom{n}{j}\lambda^{\left(j\right)}_z P^{\left(n-j\right)}_z.\label{TMP}
\end{align}

Applying \eqref{TMP} to the constant one function, multiplying by a test function $h$, and integrating yields
\begin{align*}
\sum_{i=0}^{n}\binom{n}{i}\int \left(\widehat{ T}M_{\varphi}^iP_z^{\left(n-i\right)}1\right)\cdot h\mathrm{d}\mu
&=\sum_{j=0}^{n}\binom{n}{j}\lambda^{\left(j\right)}_z \int P^{\left(n-j\right)}_z1\cdot h\mathrm{d}\mu.
\end{align*}

Recalling the connection between $\widehat{ T}$ and $ T$ from \eqref{hat CYRI} yields
\begin{align*}
\sum_{i=0}^{n}\binom{n}{i}\int \varphi^i\cdot P_z^{\left(n-i\right)}1\cdot\left(h\circ  T\right)\mathrm{d}\mu
&=\sum_{j=0}^{n}\binom{n}{j}\lambda^{\left(j\right)}_z \int P^{\left(n-j\right)}_z1\cdot h\mathrm{d}\mu.
\end{align*}

Setting $h\equiv 1$ we have that
\begin{align}
\sum_{i=0}^{n}\binom{n}{i}\int \varphi^i\cdot P_z^{\left(n-i\right)}1\mathrm{d}\mu
&=\sum_{j=0}^{n}\binom{n}{j}\lambda^{\left(j\right)}_z \int P^{\left(n-j\right)}_z1\mathrm{d}\mu.\label{eq: Aufspaltung Bew}
\end{align}
If we set $n=1$, then
\begin{align*}
\int \varphi\cdot P_z1\mathrm{d}\mu+\int P_z^{\left(1\right)}1\mathrm{d}\mu
&=\lambda^{\left(1\right)}_z \int P_z1\mathrm{d}\mu+\lambda_z \int P^{\left(1\right)}_z1\mathrm{d}\mu
\end{align*}
and thus, with $P_0 h=\int h\mathrm{d}\mu$, see Lemma \ref{Pf=intf}, i.e.\ $P_0 1=1$, $\int\varphi\mathrm{d}\mu=0$, and $\lambda_0=1$
it follows that $\lambda_0^{\left(1\right)}=0$.

From \eqref{eq: Aufspaltung Bew} it follows then
for $n\geq 2$
\begin{align}
\lambda^{\left(n\right)}_0&=\lambda^{\left(n\right)}_0\int P_0 1\mathrm{d}\mu\notag\\
&=\sum_{i=0}^{n}\binom{n}{i}\int\varphi^i\cdot P_0^{\left(n-i\right)}1\mathrm{d}\mu-\sum_{j=0}^{n-1}\binom{n}{j}\lambda^{\left(j\right)}_0\int P^{\left(n-j\right)}_0 1\mathrm{d}\mu\notag\\
&=\sum_{i=1}^{n}\binom{n}{i}\int\varphi^i\cdot P_0^{\left(n-i\right)}1\mathrm{d}\mu-\sum_{j=0}^{n-1}\binom{n}{j}\lambda^{\left(j\right)}_0\int P^{\left(n-j\right)}_0 1\mathrm{d}\mu.\label{lambda00}
\end{align}

In the next steps we will estimate $P_0^{\left(k\right)}1\left(\omega\right)$. 
Remember that by \eqref{Pz estim} we have 
$\left|P_z1\right|_{\infty}
\leq K'$ if $\left|z\right|\cdot\left\|\varphi\right\|<\varkappa$.
Since $P_z$ is analytic on a circle around zero with radius $r\leq \varkappa/\left\|\varphi\right\|$ it follows
by Cauchy's integral formula and the maximum principle that 
\begin{align*}
\left|\upperleft{\varphi}{}{P_0}^{\left(n\right)}1\right|_{\infty}
&\leq n!\cdot 2\cdot K_0\cdot\exp\left(\varkappa\right) \cdot \left(\frac{\left\|\varphi\right\|}{\varkappa}\right)^{n}.
\end{align*}
We notice that
\begin{align*}
 n!\cdot2\cdot K_0\cdot \exp\left(\varkappa\right)\cdot \varkappa^{-n}
 \leq n!\cdot \left(2\cdot K_0\cdot \exp\left(\varkappa\right)\cdot \max\left\{1,\varkappa^{-1}\right\}\right)^n
 \eqqcolon n!\cdot a^n.
\end{align*}
Hence, we can estimate the first sum of \eqref{lambda00} by
\begin{align}
\sum_{i=1}^{n}\binom{n}{i}\int\varphi^i\cdot P_0^{\left(n-i\right)}1\mathrm{d}\mu&\leq \sum_{i=1}^{n}\binom{n}{i}\left|\varphi^i\right|_1\cdot \left|P_0^{\left(n-i\right)}1\right|_{\infty}\notag\\
&\leq \sum_{i=1}^{n}\binom{n}{i}\left|\varphi\right|_1^i\cdot (n-i)!\cdot a^{n-i}\cdot \left\|\varphi\right\|^{n-i}\notag\\
&\leq \sum_{i=1}^{n}\binom{n}{i} (n-i)!\cdot a^{n-i}\cdot \left|\varphi\right|_1 \cdot\left\|\varphi\right\|^{n-1}.\label{eq: eta first summand0}
\end{align}
A further estimate yields 
\begin{align}
\sum_{i=1}^{n}\binom{n}{i}(n-i)!\cdot a^{n-i}
\leq \sum_{i=1}^{n}n!\cdot a^{n-i}
\leq n!\cdot a^{n-1}\sum_{i=0}^{\infty}a^{-i}
=\frac{n!\cdot a^{n-1}}{1-a^{-1}}
\leq n!\cdot a^n,\label{eq: eta first summand}
\end{align}
since $a\geq 2$. 

In the sequel we will prove that \eqref{pi lambda} holds for $\eta=3a$ using an induction argument. 
The base case for $n=2$ is obvious since the minuend in \eqref{lambda00} equals zero in that case
and thus $\lambda^{(2)}\leq 2!\cdot a^2\cdot \left|\varphi\right|_1\cdot \left\|\varphi\right\|
\leq 2!\cdot b^2\cdot \left|\varphi\right|_1\cdot \left\|\varphi\right\|$.
By assuming that \eqref{pi lambda} holds for all $k\leq n-1$ we have for the second sum in \eqref{lambda00} that 
\begin{align}
\sum_{j=2}^{n-1}\binom{n}{j}\lambda^{\left(j\right)}_0\int P^{\left(n-j\right)}_0 1\mathrm{d}\mu
&\leq  \sum_{j=2}^{n-1}\binom{n}{j}j!\cdot \eta^j\cdot \left|\varphi\right|_1\cdot\left\|\varphi\right\|^{j-1}\cdot \left|P^{\left(n-j\right)}_0 1\right|_{\infty}\notag\\
&\leq \sum_{j=2}^{n-1}\binom{n}{j}j!\cdot \eta^j\cdot \left|\varphi\right|_1\cdot\left\|\varphi\right\|^{j-1}\cdot (n-j)!\cdot a^{n-j}\cdot\left\|\varphi\right\|^{n-j}\notag\\
&=n!\cdot \left|\varphi\right|_1\cdot\left\|\varphi\right\|^{n-1}\cdot \sum_{j=2}^{n-1} \eta^j\cdot a^{n-j}\label{eq: eta second summand}
\end{align}
Furthermore,
\begin{align*}
\sum_{j=0}^{n-1} \eta^j\cdot a^{n-j}
&\leq \eta^n\cdot \sum_{j=0}^{n-1} \left(\frac{a}{\eta}\right)^{n-j}
\leq  \eta^n\cdot \sum_{j=1}^{\infty} \left(\frac{1}{3}\right)^{j}
=\frac{1}{2}\cdot \eta^n.
\end{align*}
Combining this with \eqref{lambda00}, \eqref{eq: eta first summand0}, \eqref{eq: eta first summand}, and \eqref{eq: eta second summand} yields
\begin{align*}
 \lambda^{\left(n\right)}_0
 &\leq n!\cdot a^n\cdot \left|\varphi\right|_1\cdot\left\|\varphi\right\|
 +n!\cdot \frac{1}{2}\cdot b^n\cdot \left|\varphi\right|_1\cdot\left\|\varphi\right\|
 \leq n!\cdot b^n\cdot \left|\varphi\right|_1\cdot\left\|\varphi\right\|,
\end{align*}
which gives the statement of the lemma. 
\end{proof}

\begin{proof}[Proof of Lemma \ref{lemma: Tnfn chi deviation allg}] 
We assume that $t\in\left(0, \varkappa'/\left\|\varphi\right\|\right)$.
Using a combination of 
Lemma \ref{lemma 2 2nd part} and Lemma \ref{e Snphi}
implies that there exists $N>0$ as in Lemma \ref{e Snphi} such that for all $n\in\N_{>N}$
\begin{align*}
 \mu\left(\left|\mathsf{S}_n\varphi\right|> u\right)
 &\leq 2K'\cdot\exp\left(t \cdot \left(- u+C\cdot\left\|\varphi\right\|\right)\right)\cdot \left|\lambda_{t}\right|^n,
\end{align*}
with $C$ as in Lemma \ref{lemma 2 2nd part} and $K'$ as in Lemma \ref{e Snphi}.
Furthermore, by our choice of $t$ we have that 
$C\cdot \left\|\varphi\right\|\cdot t$ is bounded by $C\cdot \varkappa'$.
This implies
\begin{align}
 \mu\left(\left|\mathsf{S}_n\varphi\right|> u\right)
 &\leq K\cdot\exp\left(-t\cdot u\right)\cdot \left|\lambda_{t}\right|^n,\label{eq: ineq for proof of 4.1}
\end{align}
with $K\coloneqq 2K'\cdot \exp\left(C\cdot \varkappa'\right)$.

From Lemma \ref{spect sep analyt} and Lemma \ref{lambda^n}
it follows for $t\in\left(0,\varkappa'/\left\|\varphi\right\|\right)$
that 
\begin{align*}
\left|\lambda_{t}\right|&\leq \left|\sum_{k=0}^{\infty}\frac{t^k\cdot\lambda_0^{\left(k\right)}}{k!}\right|=1+\sum_{k=2}^{\infty}\frac{t^k\cdot \left|\lambda_0^{\left(k\right)}\right|}{k!}
\leq 1+\sum_{k=2}^{\infty}t^k\cdot\eta^k\cdot \left|\varphi\right|_1\cdot \left\|\varphi\right\|^{k-1}\\
&=1+t^2\cdot \eta^2\cdot \left|\varphi\right|_1\cdot \left\|\varphi\right\|\cdot \sum_{k=2}^{\infty}t^{k-2}\cdot \eta^{k-2}\cdot \left\|\varphi\right\|^{k-2}.
\end{align*}
For the following we assume that $t\cdot \left\|\varphi\right\|\leq \min\left\{\varkappa', \left(2\eta\right)^{-1}\right\}$. 
This implies
$\eta\cdot \left\|\varphi\right\|\cdot t<1/2$ and we can apply the geometric series formula obtaining
\begin{align*}
\left|\lambda_{t}\right|\leq 1+\frac{t^2\cdot\eta^2\cdot \left|\varphi\right|_1\cdot \left\|\varphi\right\|}{1-t\cdot\eta\cdot \left\|\varphi\right\|}
\leq 1+t^2\cdot 2\eta^2\cdot \left|\varphi\right|_1\cdot \left\|\varphi\right\|.
\end{align*}
Thus, 
\begin{align}
\left|\lambda_{t}\right|^n
<\exp\left(t^2\cdot n\cdot 2\eta^2\cdot \left|\varphi\right|_1\cdot \left\|\varphi\right\|\right).\label{Markov 1b}
\end{align}
Combining \eqref{eq: ineq for proof of 4.1} and \eqref{Markov 1b} yields 
\begin{align*}
\mu\left(\max_{i\leq n}\left|\mathsf{S}_i\varphi\right|\geq  u\right)
&\leq K\cdot \exp\left(-t\cdot u+t^2\cdot n\cdot 2\eta^2\cdot \left|\varphi\right|_1\cdot \left\|\varphi\right\|\right).
\end{align*}
We set for the following 
\begin{align*}
 U\coloneqq\frac{1}{2}\cdot \min\left\{\left(4\cdot\eta^2\right)^{-1}, \varkappa', \left(2\eta\right)^{-1}\right\}
\end{align*}
and
\begin{align}
 t=2\cdot U\cdot \min\left\{\frac{u}{n\cdot\left|\varphi\right|_1}, 1\right\}\cdot \left\|\varphi\right\|^{-1}.\label{eq: def t}
\end{align}
This definition of $t$ ensures that $\eta\cdot \left\|\varphi\right\|\cdot t<1/2$
and $t\cdot \left\|\varphi\right\|\leq \varkappa'$.
Assume the minimum is attained at the first entry in \eqref{eq: def t}. Then 
\begin{align*}
\mu\left(\max_{i\leq n}\left|\mathsf{S}_i\varphi\right|\geq  u\right)
&\leq K\cdot \exp\left(-U\cdot \frac{u^2}{n\cdot\left|\varphi\right|_1\cdot \left\|\varphi\right\|}\right).
\end{align*}
If the minimum in \eqref{eq: def t} is attained at the second entry, then
$t\cdot u>1/2\cdot t^2\cdot n\cdot 2\eta^2\cdot \left|\varphi\right|_1\cdot \left\|\varphi\right\|$
and thus
\begin{align*}
\mu\left(\max_{i\leq n}\left|\mathsf{S}_i\varphi\right|\geq  u\right)
&\leq K\cdot \exp\left(-U \cdot \frac{u}{\left\|\varphi\right\|}\right).
\end{align*}
These two considerations give the statement of the lemma.
\end{proof}

\subsection{Proof of theorems concerning the truncated sum \texorpdfstring{$\mathsf{T}_n^{f_n}\chi$}{}}\label{proofs transfer A}
In this section we will prove the theorems given in Section \ref{subsec: intro truncated rv}.
We will start with a lemma to be used in their proofs.
\begin{lemma}\label{lemma: Tnfn chi deviation}
Let $\left(\Omega, \mathcal{A},  T, \mu,\mathcal{F},\left\|\cdot\right\|,\chi\right)$ fulfill Property $\mathfrak{D}$.
Then there exist constants $N'\in\mathbb{N}$ and $E, K>0$ such that for all $\epsilon\in (0,E)$, $r>0$ with $F(r)>0$, and $n\in\mathbb{N}_{>N'}$
\begin{align*}
\MoveEqLeft\mu\left(\max_{i\leq n}\left|\mathsf{T}^{r}_{i}\chi-\int\mathsf{T}_{i}^{r}\chi\mathrm{d}\mu\right|\geq \epsilon\cdot \int\mathsf{T}^{r}_{n}\chi\mathrm{d}\mu\right)
\leq K\cdot\exp\left(-\epsilon \cdot \frac{\int\mathsf{T}^{r}_{n}\chi\mathrm{d}\mu}{r}\right).
\end{align*}
\end{lemma}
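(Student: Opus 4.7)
The idea is to reduce the claim to the maximal exponential inequality of Lemma \ref{lemma: Tnfn chi deviation allg} by applying it to the centered truncated observable
$$\varphi_r \coloneqq \upperleft{r}{}{\chi} - \int \upperleft{r}{}{\chi}\,\mathrm{d}\mu.$$
First I would observe that $\int \varphi_r\,\mathrm{d}\mu = 0$ by construction, and that $\varphi_r \in \mathcal{F}$ because $\upperleft{r}{}{\chi} \in \mathcal{F}^+$ by Property $\mathfrak{D}$ and $\mathcal{F}$ contains the constants by Property $\mathfrak{C}$. Moreover, since $\mathsf{T}_i^{r}\chi = \sum_{k=1}^{i}\upperleft{r}{}{\chi}\circ T^{k-1}$, we have
$$\mathsf{T}_i^{r}\chi - \int \mathsf{T}_i^{r}\chi\,\mathrm{d}\mu = \mathsf{S}_i\varphi_r,$$
so the left-hand side of the claim is exactly $\mu\bigl(\max_{i\leq n}|\mathsf{S}_i\varphi_r|\geq u\bigr)$ with $u \coloneqq \epsilon\cdot \int\mathsf{T}_n^{r}\chi\,\mathrm{d}\mu = \epsilon n\int\upperleft{r}{}{\chi}\,\mathrm{d}\mu$.

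\textbf{Norm estimates.} The two quantities appearing in Lemma \ref{lemma: Tnfn chi deviation allg} are $\|\varphi_r\|$ and $|\varphi_r|_1$. Using \eqref{C 1} from Property $\mathfrak{D}$ together with the fact that the constants lie in the Banach algebra $\mathcal{F}$ and $\int\upperleft{r}{}{\chi}\,\mathrm{d}\mu\leq r$, I would derive
$$\|\varphi_r\| \leq \|\upperleft{r}{}{\chi}\| + \left|\int\upperleft{r}{}{\chi}\,\mathrm{d}\mu\right|\cdot\|1\| \leq (K_1 + \|1\|)\,r \eqqcolon K_1'\, r.$$
For the $\mathcal{L}^1$-norm the triangle inequality gives
$$|\varphi_r|_1 \leq 2\int\upperleft{r}{}{\chi}\,\mathrm{d}\mu.$$

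\textbf{Substitution into the exponential inequality.} Writing $A \coloneqq \int\mathsf{T}_n^{r}\chi\,\mathrm{d}\mu/r$, substitution yields
$$\frac{u}{\|\varphi_r\|} \geq \frac{\epsilon\, A}{K_1'}, \qquad \frac{u}{n|\varphi_r|_1} \geq \frac{\epsilon}{2}.$$
For $\epsilon < E \coloneqq 2$ the second quantity is $<1$, so the minimum in Lemma \ref{lemma: Tnfn chi deviation allg} is controlled by $u/(n|\varphi_r|_1)$, giving a product of at least $U\epsilon^2 A/(2K_1')$; in the complementary regime the minimum equals $1$ and the product is at least $U\epsilon A/K_1'$. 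Taking $N' \geq N$ from Lemma \ref{lemma: Tnfn chi deviation allg}, both cases combine to the bound
$$\mu\!\left(\max_{i\leq n}\left|\mathsf{S}_i\varphi_r\right|\geq u\right) \leq K\,\exp\!\left(-c(\epsilon)\cdot A\right),$$
where $c(\epsilon)$ depends only on $\epsilon$ (and not on $r$ or $n$). Absorbing $c(\epsilon)/\epsilon$ into the constant $K_\epsilon$ produces the stated bound of the form $K_\epsilon\exp(-\epsilon A)$.

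\textbf{Main obstacle.} The proof is essentially a translation of the abstract maximal inequality into the truncation setting, so the hard work has already been done. The only subtle point is verifying that the norm bounds $\|\varphi_r\|\leq K_1' r$ and $|\varphi_r|_1\leq 2\int\upperleft{r}{}{\chi}\,\mathrm{d}\mu$ hold \emph{uniformly in $r$}, and that the threshold $N'$ and the constant $K_\epsilon$ can be chosen independently of $r$; this independence is what permits the application of the lemma for arbitrary $r$ with $F(r)>0$. Checking that $K_\epsilon$ indeed absorbs the difference between the regime where the exponent scales like $\epsilon A$ and the regime where it scales like $\epsilon^2 A$ (on the range $\epsilon\in(0,E)$) is a matter of straightforward bookkeeping using $\epsilon < E$.
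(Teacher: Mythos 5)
Your reduction is exactly the paper's: you center the truncated observable, apply Lemma \ref{lemma: Tnfn chi deviation allg} with $u=\epsilon\int\mathsf{T}_n^{r}\chi\,\mathrm{d}\mu$, bound $\left\|\varphi_r\right\|\leq (K_1+\left\|1\right\|)\cdot r$ via \eqref{C 1} and the Banach-algebra structure, bound $\left|\varphi_r\right|_1\leq 2\int\upperleft{r}{}{\chi}\,\mathrm{d}\mu$, and treat the minimum by cases; up to the bound $K\exp\bigl(-c(\epsilon)\cdot\int\mathsf{T}_n^{r}\chi\,\mathrm{d}\mu/r\bigr)$ with $c(\epsilon)=U\epsilon^{2}/(2(K_1+\left\|1\right\|))$ this coincides with the paper's computation.

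The final step, however, is not valid as written. You cannot ``absorb $c(\epsilon)/\epsilon$ into $K_\epsilon$'': writing $A\coloneqq\int\mathsf{T}_n^{r}\chi\,\mathrm{d}\mu/r$, the inequality $K\,\mathrm{e}^{-c(\epsilon)A}\leq K_\epsilon\,\mathrm{e}^{-\epsilon A}$ can hold for all admissible $n$ and $r$ only if $c(\epsilon)\geq\epsilon$, i.e.\ only if $\epsilon\geq 2(K_1+\left\|1\right\|)/U$; for smaller $\epsilon$ the quotient $\mathrm{e}^{(\epsilon-c(\epsilon))A}$ tends to infinity (keep $r$ fixed and let $n\to\infty$, so that $A=n\int\upperleft{r}{}{\chi}\,\mathrm{d}\mu/r\to\infty$), and no constant depending only on $\epsilon$ can compensate a deficit in the exponential rate. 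To be fair, the paper's own proof stops at essentially the same point: it derives the estimate with the factor $U\epsilon^{2}/(K_1+\left\|1\right\|)$ in the exponent and then passes to the stated form in one abbreviated sentence. What is actually established by this computation --- and what is all that is used later, in the proofs of Theorems \ref{Thm: Sn* allg} and \ref{Thm: Sn* reg var}, where $\epsilon$ is fixed and only positivity of the rate matters --- is the bound with exponent $-\mathrm{const}\cdot\epsilon^{2}\cdot\int\mathsf{T}_n^{r}\chi\,\mathrm{d}\mu/r$. So your argument reproduces the paper's proof faithfully, but the ``straightforward bookkeeping'' you defer is precisely the step that cannot be carried out for the inequality in its literal form with exponent $-\epsilon\cdot\int\mathsf{T}_n^{r}\chi\,\mathrm{d}\mu/r$ and $\epsilon$ ranging over a full interval $(0,E)$; you should either record the conclusion with $\epsilon^{2}$ times a system constant in the exponent, or justify the stated form by a genuinely different argument rather than by adjusting the prefactor.
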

 
\begin{proof}
We denote for the following $\upperleft{r}{}{\overline{\chi}}\coloneqq \upperleft{r}{}{\chi}-\int \upperleft{r}{}{\chi}\mathrm{d}\mu$.   
Applying Lemma \ref{lemma: Tnfn chi deviation allg} yields
\begin{align*}
\MoveEqLeft\left(\max_{i\leq n}\left|\mathsf{T}^{r}_{i}\chi-\int\mathsf{T}_{i}^{r}\chi\mathrm{d}\mu\right|\geq \epsilon\cdot \int\mathsf{T}^{r}_{n}\chi\mathrm{d}\mu\right)\\
&\leq K\exp\left(- U\cdot \frac{\epsilon\cdot \int\mathsf{T}^{r}_{n}\chi\mathrm{d}\mu}{\left\|\upperleft{r}{}{\overline{\chi}}\right\|}\cdot \min\left\{\frac{\epsilon\cdot \int\mathsf{T}^{r}_{n}\chi\mathrm{d}\mu}{n\cdot \left|\upperleft{r}{}{\overline{\chi}}\right|_1},1\right\}\right),
\end{align*}
for $n\geq N$ given in Lemma \ref{lemma: Tnfn chi deviation allg}.
We have that $n\cdot \left|\upperleft{r}{}{\overline{\chi}}\right|_1\geq 1/2\cdot\int\mathsf{T}^{r}_{n}\chi\mathrm{d}\mu\geq \epsilon\int\mathsf{T}^{r}_{n}\chi\mathrm{d}\mu$
if $\epsilon\leq 1/2$.
Furthermore, from \eqref{C 1} we obtain
\begin{align*}
 \left\|\upperleft{r}{}{\overline{\chi}}\right\|=\left\|\upperleft{r}{}{\chi}-\int \upperleft{r}{}{\chi}\mathrm{d}\mu\right\|
 \leq \left\|\upperleft{r}{}{\chi}\right\|+\left\|\int \upperleft{r}{}{\chi}\mathrm{d}\mu\right\|
 \leq K_1\cdot r+\left\|r\right\|
 = \left(K_1+\left\|1\right\|\right)\cdot r.
\end{align*}
Thus,
\begin{align*}
\mu\left(\max_{i\leq n}\left|\mathsf{T}^{r}_{i}\chi-\int\mathsf{T}_{i}^{r}\chi\mathrm{d}\mu\right|\geq \epsilon\cdot \int\mathsf{T}^{r}_{n}\chi\mathrm{d}\mu\right)
&\leq K\exp\left(-\frac{U\cdot \epsilon^2}{K_1+\left\|1\right\|}\cdot \frac{\int\mathsf{T}^{r}_{n}\chi\mathrm{d}\mu}{r}\right),
\end{align*}
for $n\geq N$ given in Lemma \ref{lemma: Tnfn chi deviation allg}.

If $\epsilon$ is sufficiently small, then 
$\epsilon^2\cdot U_1\cdot \left(2\cdot \left(K_1+\left\|1\right\|\right)\cdot r\right)^{-1}$
and we obtain the statement of the lemma.
\end{proof}

\begin{proof}[Proof of Theorem \ref{Thm: Sn* allg}]
We assume that \eqref{cond a} is fulfilled for a sequence $\left(f_n\right)$ with $F\left(f_n\right)>0$, for all $n\in\mathbb{N}$. Further, we note that 
this condition
implies that for all $\epsilon>0$ there exists $N\in\N$ such that
\begin{align}
\epsilon\cdot\frac{\int\mathsf{T}_{n}^{f_{n}}\chi\mathrm{d}\mu}{f_{n}}\geq\log\psi\left(n\right),\label{xi E tn}
\end{align}
for some $\psi\in\Psi$ and all $n\geq N$.
Fix $\epsilon>0$ and apply Lemma \ref{lemma: Tnfn chi deviation} to obtain
\begin{align*}
\mu\left(\left|\mathsf{T}_{n}^{f_{n}}\chi-\int\mathsf{T}_{n}^{f_{n}}\chi\mathrm{d}\mu\right|\geq\epsilon\int\mathsf{T}_{n}^{f_{n}}\chi\mathrm{d}\mu\right) 
& \leq K_{\epsilon}\cdot \exp\left(-\epsilon\cdot\frac{\int\mathsf{T}_{n}^{f_{n}}\chi\mathrm{d}\mu}{f_{n}}\right),
\end{align*}
for $n$ sufficiently large.
From \eqref{xi E tn} we can conclude that the right hand side is summable
and obtain by the Borel-Cantelli lemma that 
$\mu\left(\left|\mathsf{T}_{n}^{f_{n}}\chi-\int\mathsf{T}_{n}^{f_{n}}\chi\mathrm{d}\mu\right|\geq\epsilon\int\mathsf{T}_{n}^{f_{n}}\chi\mathrm{d}\mu\text{ i.o.}\right)=0.$ 
Since $\epsilon>0$ is arbitrary, it follows that $\left|\mathsf{T}_{n}^{f_{n}}\chi-\int\mathsf{T}_{n}^{f_{n}}\chi\mathrm{d}\mu\right|=o\left(\int\mathsf{T}_{n}^{f_{n}}\chi\mathrm{d}\mu\right)$
almost surely and hence the assertion of the theorem.
\end{proof}

\begin{proof} [Proof of Theorem \ref{Thm: Sn* reg var}]
We define the sequences $\left(g_{n}\right)_{n\in\mathbb{N}}$ and $\left(\bar{g}_{n}\right)_{n\in\mathbb{N}}$
with $g_{n}\coloneqq\max\left\{ f_{n},n\right\} $
and $\bar{g}_{n}\coloneqq\min\left\{ f_{n},n\right\}$.
Further, set $I_{j}\coloneqq\left[2^{j},2^{j+1}-1\right]$ for $j\in\mathbb{N}$
and for given $\epsilon>0$ we set $\rho_{k}\coloneqq\rho_{k}\left(\epsilon\right)\coloneqq\left(1+\epsilon\right)^{k}$
and 
the sequences $\left(q_{j}\right)_{j\in\mathbb{N}}$ and
$\left(r_{j}\right)_{j\in\mathbb{N}}$ as
\begin{align*}
q_{j}\coloneqq\left\lfloor \frac{j\cdot\log2}{\log\left(1+\epsilon\right)}\right\rfloor 
\text{ and }
r_{j}\coloneqq{\left\lceil \frac{\log\left(\max_{n\in I_{j}}g_{n}\right)}{\log\left(1+\epsilon\right)}\right\rceil }.
\end{align*}
These numbers are chosen such that 
\begin{align}
\left[\rho_{q_j},\rho_{r_j}\right]\supset\left[\min_{n\in I_{j}}g_n,\max_{n\in I_{j}}g_n\right].\label{eq: superset}
\end{align}

We will split the proof of this theorem as follows:
\begin{enumerate}[label=(\Alph*)]
 \item\label{en: trunc A}
There exist $E',N>0$ such that for all $j\geq N$ and $\epsilon<E'$ 
\begin{align*}
 \MoveEqLeft\bigcup_{n\in I_j}\left\{\left|\mathsf{T}_n^{g_n}\chi-\int\mathsf{T}_n^{g_n}\chi\mathrm{d}\mu\right|>4\epsilon\cdot \int\mathsf{T}_n^{g_n}\chi\mathrm{d}\mu\right\}\\
 &\subset \bigcup_{k=q_j}^{r_j}\left\{\max_{n\in I_j}\left|\mathsf{T}_n^{\rho_k}\chi-\int\mathsf{T}_n^{\rho_k}\chi\mathrm{d}\mu\right|>\epsilon\cdot \int\mathsf{T}_{2^{j+1}-1}^{\rho_k}\chi\mathrm{d}\mu\right\}.
\end{align*}
\item\label{en: trunc B}  
We have that 
\begin{align}
 \sum_{j=1}^{\infty}\sum_{k=q_j}^{r_j}\mu\left(\max_{n\in I_j}\left|\mathsf{T}_n^{\rho_k}\chi-\int\mathsf{T}_n^{\rho_k}\chi\mathrm{d}\mu\right|>\epsilon\cdot \int\mathsf{T}_{2^{j+1}-1}^{\rho_k}\chi\mathrm{d}\mu\right)<\infty.\label{eq: sum for BC}
\end{align}

\item\label{en: trunc C} 
For all $\epsilon>0$ we have
\begin{align*}
\mu\left(\left|\mathsf{T}_{n}^{\bar{g}_{n}}\chi-\int\mathsf{T}_{n}^{\bar{g}_{n}}\chi\mathrm{d}\mu\right|\geq\epsilon\int\mathsf{T}_{n}^{\bar{g}_{n}}\chi\mathrm{d}\mu\text{ i.o.}\right)=0.
\end{align*}
\end{enumerate}
Applying the Borel-Cantelli lemma on \ref{en: trunc B} and combining this with \ref{en: trunc A} using \eqref{eq: superset} yields 
\begin{align*}
\mu\left(\left|\mathsf{T}_{n}^{g_{n}}\chi-\int\mathsf{T}_{n}^{g_{n}}\chi\mathrm{d}\mu\right|\geq\epsilon\int\mathsf{T}_{n}^{g_{n}}\chi\mathrm{d}\mu\text{ i.o.}\right)=0.
\end{align*}
Combining this with \ref{en: trunc C} gives the statement of the theorem.

\emph{Proof of \ref{en: trunc A}}.
We have that \eqref{E X*} implies 
\begin{align*}
\int\mathsf{T}_{n}^{\rho_{k+1}}\chi\mathrm{d}\mu 
& \sim n\cdot\frac{\alpha}{1-\alpha}\cdot\rho_{k+1}^{1-\alpha}L\left(\rho_{k+1}\right)
  \sim n\cdot\frac{\alpha}{1-\alpha}\cdot \left(1+\epsilon\right)^{1-\alpha}\rho_k^{1-\alpha}L\left(\rho_k\right)\\
&  \sim \left(1+\epsilon\right)^{1-\alpha}\int\mathsf{T}_{n}^{\rho_k}\chi\mathrm{d}\mu,
\end{align*}
for $k$ tending to infinity.
Choosing $k$ such that $\rho_k\leq g_n\leq \rho_{k+1}$ implies
\begin{align*}
\left(1+\epsilon\right)\int\mathsf{T}_{n}^{\rho_k}\chi\mathrm{d}\mu
\geq\int\mathsf{T}_{n}^{g_{n}}\chi\mathrm{d}\mu
\geq\int\mathsf{T}_{n}^{\rho_{k+1}}\chi\mathrm{d}\mu/\left(1+\epsilon\right),
\end{align*}
for $n$ sufficiently large since $g_n$ tends to infinity. 
We assume for the following that $\epsilon<1/4$.
Then we obtain the following implications
\begin{align}
 &  & \int\mathsf{T}_{n}^{g_n}\chi\mathrm{d}\mu-\mathsf{T}_{n}^{g_n}\chi & >4\epsilon\cdot\int\mathsf{T}_{n}^{g_n}\chi\mathrm{d}\mu\notag\\
 & \Longrightarrow & \int\mathsf{T}_{n}^{g_n}\chi\mathrm{d}\mu-\mathsf{T}_{n}^{\rho_k}\chi & >4\epsilon\cdot\int\mathsf{T}_{n}^{\rho_k}\chi\mathrm{d}\mu\notag\\
 & \Longrightarrow & \int\mathsf{T}_{n}^{\rho_k}\chi\mathrm{d}\mu-\mathsf{T}_{n}^{\rho_k}\chi & >4\epsilon\cdot\int\mathsf{T}_{n}^{\rho_k}\chi\mathrm{d}\mu-\left(\int\mathsf{T}_{n}^{g_{n}}\chi\mathrm{d}\mu-\int\mathsf{T}_{n}^{\rho_k}\chi\mathrm{d}\mu\right)\notag\\
 &  &  & \geq 4\epsilon\cdot\int\mathsf{T}_{n}^{\rho_k}\chi\mathrm{d}\mu-\epsilon\cdot \int\mathsf{T}_{n}^{\rho_k}\chi\mathrm{d}\mu
=3\epsilon\cdot\int\mathsf{T}_{n}^{\rho_k}\chi\mathrm{d}\mu\notag
\end{align}
and thus 
\begin{align}
\left\{\int\mathsf{T}_{n}^{g_n}\chi\mathrm{d}\mu-\mathsf{T}_{n}^{g_n}\chi >4\epsilon\cdot\int\mathsf{T}_{n}^{g_n}\chi\mathrm{d}\mu\right\}
\subset \left\{\int\mathsf{T}_{n}^{\rho_k}\chi\mathrm{d}\mu-\mathsf{T}_{n}^{\rho_k}\chi\leq 2\epsilon\cdot\int\mathsf{T}_{n}^{\rho_k}\chi\mathrm{d}\mu\right\}.
\label{ETn-Tn}
\end{align}

Analogously to the situation above we obtain 
\begin{align}
 &  & \mathsf{T}_{n}^{g_n}\chi-\int\mathsf{T}_{n}^{g_n}\chi\mathrm{d}\mu & >4\epsilon\cdot\int\mathsf{T}_{n}^{g_n}\chi\mathrm{d}\mu\notag\\
 & \Longrightarrow & \mathsf{T}_{n}^{\rho_{k+1}}\chi-\int\mathsf{T}_{n}^{g_n}\chi\mathrm{d}\mu & >4\epsilon\cdot\int\mathsf{T}_{n}^{\rho_{k+1}}\chi\mathrm{d}\mu
 -4\epsilon\cdot \left(\int\mathsf{T}_{n}^{\rho_{k+1}}\chi\mathrm{d}\mu-\int\mathsf{T}_{n}^{g_n}\chi\mathrm{d}\mu\right)\notag\\
 & \Longrightarrow & \mathsf{T}_{n}^{\rho_{k+1}}\chi-\int\mathsf{T}_{n}^{\rho_{k+1}}\chi\mathrm{d}\mu & >4\epsilon\cdot\int\mathsf{T}_{n}^{\rho_{k+1}}\chi\mathrm{d}\mu
 -\left(1+4\epsilon\right)\cdot \left(\int\mathsf{T}_{n}^{\rho_{k+1}}\chi\mathrm{d}\mu-\int\mathsf{T}_{n}^{g_n}\chi\mathrm{d}\mu\right)
 \notag\\
 &\Longrightarrow & \mathsf{T}_{n}^{\rho_{k+1}}\chi-\int\mathsf{T}_{n}^{\rho_{k+1}}\chi\mathrm{d}\mu & >\left(4\epsilon -\epsilon\cdot\left(1+4\epsilon\right)\right)\cdot
\int\mathsf{T}_{n}^{\rho_{k+1}}\chi\mathrm{d}\mu
\geq 2\epsilon\cdot\int\mathsf{T}_{n}^{\rho_{k+1}}\chi\mathrm{d}\mu\notag
\end{align}
and thus 
\begin{align}
\left\{\mathsf{T}_{n}^{g_n}\chi-\int\mathsf{T}_{n}^{g_n}\chi\mathrm{d}\mu  >4\epsilon\cdot\int\mathsf{T}_{n}^{g_n}\chi\mathrm{d}\mu\right\}
\subset \left\{\mathsf{T}_{n}^{\rho_{k+1}}\chi-\int\mathsf{T}_{n}^{\rho_{k+1}}\chi\mathrm{d}\mu>2\epsilon\cdot\int\mathsf{T}_{n}^{\rho_{k+1}}\chi\mathrm{d}\mu\right\}
.\label{Tn-ETn}
\end{align}

Furthermore, we have for every $m\in I_{j}$ and $k\in\N$ that 
\begin{align}
\left\{ \left|\mathsf{T}_{m}^{\rho_k}\chi-\int\mathsf{T}_{m}^{\rho_k}\chi\mathrm{d}\mu\right|\geq 2\epsilon\int\mathsf{T}_{m}^{\rho_k}\chi\mathrm{d}\mu\right\}
 & \subset\left\{ \left|\mathsf{T}_{m}^{\rho_k}\chi-\int\mathsf{T}_{m}^{\rho_k}\chi\mathrm{d}\mu\right|\geq 2\epsilon\int\mathsf{T}_{2^{j}}^{\rho_k}\chi\mathrm{d}\mu\right\} \notag\\
 & \subset\left\{ \max_{n\in I_{j}}\left|\mathsf{T}_{n}^{\rho_k}\chi-\int\mathsf{T}_{n}^{\rho_k}\chi\mathrm{d}\mu\right|\geq2\epsilon\int\mathsf{T}_{2^{j}}^{\rho_k}\chi\mathrm{d}\mu\right\} \notag\\
 & \subset\left\{ \max_{n\in I_{j}}\left|\mathsf{T}_{n}^{\rho_k}\chi-\int\mathsf{T}_{n}^{\rho_k}\chi\mathrm{d}\mu\right|\geq\epsilon\int\mathsf{T}_{2^{j+1}-1}^{\rho_k}\chi\mathrm{d}\mu\right\}.\label{max, nicht max}
\end{align}
Combining \eqref{ETn-Tn}, \eqref{Tn-ETn}, and \eqref{max, nicht max} yields \ref{en: trunc A}.

\emph{Proof of \ref{en: trunc B}}.
We have
that $\upperleft{\rho_{k}}{}{\chi}\leq\rho_{k}$. Hence, we can apply Lemma \ref{lemma: Tnfn chi deviation}
assuming that $\epsilon<E$
to the sum $\mathsf{T}_{n}^{\rho_{k}}\chi$ and obtain 
for $j$ sufficiently large 
\begin{align}
\mu\left( \max_{n\in I_{j}}\left|\mathsf{T}_{n}^{\rho_k}\chi-\int\mathsf{T}_{n}^{\rho_k}\chi\mathrm{d}\mu\right|\geq\epsilon\int\mathsf{T}_{2^{j+1}-1}^{\rho_k}\chi\mathrm{d}\mu\right)
\leq K_{\epsilon}\cdot\exp\left(-\epsilon\cdot\frac{\int\mathsf{T}_{2^{j+1}-1}^{\rho_{k}}\chi\mathrm{d}\mu}{\rho_{k}}\right).\label{eq: Tnrhok} 
\end{align}
Using \eqref{E S*} we have for $k$ tending to infinity that 
\begin{align*}
\int\mathsf{T}_{2^{j+1}-1}^{\rho_{k}}\chi\mathrm{d}\mu\sim\left(2^{j+1}-1\right)\cdot\frac{\alpha}{1-\alpha}\cdot\rho_{k}^{1-\alpha}\cdot L\left(\rho_{k}\right).
\end{align*}
Hence, we obtain by \eqref{eq: Tnrhok} that 
\begin{align*}
\mu\left( \max_{n\in I_{j}}\left|\mathsf{T}_{n}^{\rho_k}\chi-\int\mathsf{T}_{n}^{\rho_k}\chi\mathrm{d}\mu\right|\geq\epsilon\int\mathsf{T}_{2^{j+1}-1}^{\rho_k}\chi\mathrm{d}\mu\right)
& \leq K_{\epsilon}\cdot \exp\left(-\epsilon\cdot\frac{\alpha}{1-\alpha}\cdot\frac{2^j\cdot L\left(\rho_{k}\right)}{\rho_{k}^{\alpha}}\right).
\end{align*}
We estimate 
\begin{align}
\MoveEqLeft\sum_{k=q_{j}}^{r_{j}}\mu\left( \max_{n\in I_{j}}\left|\mathsf{T}_{n}^{\rho_k}\chi-\int\mathsf{T}_{n}^{\rho_k}\chi\mathrm{d}\mu\right|\geq\epsilon\int\mathsf{T}_{2^{j+1}-1}^{\rho_k}\chi\mathrm{d}\mu\right)\notag\\
 & \leq K_{\epsilon}\cdot\sum_{k=q_{j}}^{r_{j}}\exp\left(-\epsilon\cdot \frac{\alpha}{1-\alpha}\cdot\frac{2^j\cdot L\left(\left(1+\epsilon\right)^{k}\right)}{\left(1+\epsilon\right)^{\alpha\cdot k}}\right)\notag\\
 & =K_{\epsilon}\cdot\exp\left(-\epsilon\cdot \frac{\alpha}{1-\alpha}\cdot\frac{2^j\cdot L\left(\left(1+\epsilon\right)^{r_{j}}\right)}{\left(1+\epsilon\right)^{\alpha\cdot r_{j}}}\right)\notag\\
 &\qquad\cdot\sum_{k=q_{j}}^{r_{j}}\exp\left(-\epsilon\cdot\frac{\alpha}{1-\alpha}\cdot2^j\left(\frac{L\left(\left(1+\epsilon\right)^{k}\right)}{\left(1+\epsilon\right)^{\alpha\cdot k}}-\frac{L\left(\left(1+\epsilon\right)^{r_{j}}\right)}{\left(1+\epsilon\right)^{\alpha\cdot r_{j}}}\right)\right),\label{reg var 1}
\end{align}
for $j$ sufficiently large.
We obtain by factoring out that 
\begin{align}
\frac{L\left(\left(1+\epsilon\right)^{k}\right)}{\left(1+\epsilon\right)^{\alpha\cdot k}}-\frac{L\left(\left(1+\epsilon\right)^{r_{j}}\right)}{\left(1+\epsilon\right)^{\alpha\cdot r_{j}}}
 =\frac{L\left(\left(1+\epsilon\right)^{r_{j}}\right)}{\left(1+\epsilon\right)^{\alpha\cdot r_{j}}}\cdot\left(\left(1+\epsilon\right)^{\alpha\cdot\left(r_{j}-k\right)}\frac{L\left(\left(1+\epsilon\right)^{k}\right)}{L\left(\left(1+\epsilon\right)^{r_{j}}\right)}-1\right).\label{summand}
\end{align}

For $k$ sufficiently large we have 
${L((1+\epsilon)^{k})}/{L((1+\epsilon )^{k+1} )}>{ (1+\epsilon )^{-\alpha/2}}$
 and hence we have for $k>q_{j}$ and $j$ sufficiently large 
${L ( (1+\epsilon )^{k} )}/{L((1+\epsilon)^{r_{j}})}> { (1+\epsilon )^{-\alpha\cdot (r_{j}-k )/2}}.$
Applying this to \eqref{summand} and defining $\epsilon_1\coloneqq(1+\epsilon)^{\alpha/2}-1$
yields 
\begin{align*}
\frac{L((1+\epsilon)^{k})}{\left(1+\epsilon\right)^{\alpha\cdot k}}-\frac{L\left(\left(1+\epsilon\right)^{r_{j}}\right)}{\left(1+\epsilon\right)^{\alpha\cdot r_{j}}}> & \frac{L\left(\left(1+\epsilon\right)^{r_{j}}\right)}{\left(1+\epsilon\right)^{\alpha\cdot r_{j}}}\left(\left(1+\epsilon_1\right)^{r_{j}-k}-1\right)
 <\frac{L\left(\left(1+\epsilon\right)^{r_{j}}\right)}{\left(1+\epsilon\right)^{\alpha\cdot r_{j}}}\cdot \epsilon_1\cdot\left(r_{j}-k\right),
\end{align*}
for $k>q_{j}$ and $j$ sufficiently large. Hence, we can understand
the second factor of \eqref{reg var 1} as a geometric series and
obtain 
\begin{align}
\MoveEqLeft\sum_{k=q_{j}}^{r_{j}}\exp\left(-\epsilon\cdot \frac{\alpha}{1-\alpha}\cdot2^j\left(\frac{L\left(\left(1+\epsilon\right)^{k}\right)}{\left(1+\epsilon\right)^{\alpha\cdot k}}-\frac{L\left(\left(1+\epsilon\right)^{r_{j}}\right)}{\left(1+\epsilon\right)^{\alpha\cdot r_{j}}}\right)\right)\notag\\
 & <\sum_{k=q_{j}}^{r_{j}}\exp\left(-\epsilon\cdot\frac{\alpha}{1-\alpha}\cdot2^j\cdot\frac{L\left(\left(1+\epsilon\right)^{r_{j}}\right)}{\left(1+\epsilon\right)^{\alpha\cdot r_{j}}}\cdot\epsilon_1\cdot\left(r_{j}-k\right)\right)\notag\\
 & <\frac{1}{1-\exp\left(-\epsilon\cdot\frac{\alpha}{1-\alpha}\cdot2^j\cdot\frac{L\left(\left(1+\epsilon\right)^{r_{j}}\right)}{\left(1+\epsilon\right)^{\alpha\cdot r_{j}}}\cdot\epsilon_1\right)}.\label{qj rj}
\end{align}
Furthermore, we have that $\left(1+\epsilon\right)^{r_{j}-1}<\max_{n\in I_{j}}g_{n}\leq\left(1+\epsilon\right)^{r_{j}}$.
Since $F$ with $F\left(x\right)=1-L\left(x\right)/x^{\alpha}$ is
a distribution function, $L\left(x\right)/x^{\alpha}$ is decreasing.
Thus, the slow variation of $L$ implies
\begin{align}
\frac{\left(1+\epsilon\right)^{\alpha\cdot r_j}}{L\left(\left(1+\epsilon\right)^{r_j}\right)}
\leq 2\cdot \left(1+\epsilon\right)^{\alpha}\cdot \frac{\left(1+\epsilon\right)^{\alpha\cdot\left(r_j-1\right)}}{L\left(\left(1+\epsilon\right)^{r_j-1}\right)}
&\leq2\cdot \left(1+\epsilon\right)^{\alpha}\cdot \frac{\left(\max_{n\in I_j}g_n\right)^{\alpha}}{L\left(\max_{n\in I_j}g_n\right)},\label{eq: 1+eps/ L}
\end{align}
for $j$ sufficiently large. 
To continue we state the following lemma, which is \cite[Lemma 5]{kessebohmer_strong_2016}.
\begin{lemma}\label{log gamma log tilde gamma}
Let $a,b>1$ and $\psi\in\Psi$. Then there exists $\omega\in\Psi$ such that 
\begin{align*}
\omega\left(\left\lfloor \log_b n\right\rfloor \right)\leq \psi\left(\left\lfloor \log_a n\right\rfloor\right).
\end{align*}
\end{lemma}
From the above observations and Lemma \ref{log gamma log tilde gamma} it follows that there exists $\tilde{\kappa}>0$ and $\tilde{\psi}\in\Psi$ such that 
\begin{align*}
\frac{m}{\log\psi\left(\left\lfloor\log m\right\rfloor\right)}\leq \tilde{\kappa}\cdot\frac{2^j}{\log\tilde{\psi}\left(j\right)},
\end{align*}
for all $m\in I_j$. 
Thus, the existence of $\psi\in\Psi$ such that \eqref{cond} holds implies together with \eqref{eq: 1+eps/ L}
\begin{align*}
\frac{\left(1+\epsilon\right)^{\alpha\cdot r_{j}}}{L\left(\left(1+\epsilon\right)^{r_{j}}\right)}=o\left(\frac{2^j}{\log\widetilde{\psi}\left(j\right)}\right).
\end{align*}
In particular we have 
\begin{align*}
\lim_{j\rightarrow\infty}\epsilon\cdot\frac{\alpha}{1-\alpha}\cdot 2^{j}\cdot\frac{L\left(\left(1+\epsilon\right)^{r_{j}}\right)}{\left(1+\epsilon\right)^{\alpha\cdot r_{j}}}\cdot\epsilon_1=\infty
\end{align*}
and thus we have from \eqref{qj rj} that 
\begin{align}
\MoveEqLeft\sum_{k=q_{j}}^{r_{j}}\exp\left(-\epsilon\cdot\frac{\alpha}{1-\alpha}\cdot2^{j}\left(\frac{L\left(\left(1+\epsilon\right)^{k}\right)}{\left(1+\epsilon\right)^{\alpha\cdot k}}-\frac{L\left(\left(1+\epsilon\right)^{r_{j}}\right)}{\left(1+\epsilon\right)^{\alpha\cdot r_{j}}}\right)\right)\notag\\
& <\frac{1}{1-\exp\left(-\epsilon\cdot\frac{\alpha}{1-\alpha}\cdot2^{j}\cdot\frac{L\left(\left(1+\epsilon\right)^{r_{j}}\right)}{\left(1+\epsilon\right)^{\alpha\cdot r_{j}}}\cdot\epsilon_1\right)}<2,\label{reg var 2}
\end{align}
for $j$ sufficiently large.

On the other hand we have that 
\begin{align*}
\frac{L\left(\left(1+\epsilon\right)^{r_{j}}\right)}{\left(1+\epsilon\right)^{\alpha\cdot r_{j}}} & >\frac{L\left(\left(1+\epsilon\right)\max_{n\in I_{j}}g_{n}\right)}{\left(\left(1+\epsilon\right)\max_{n\in I_{j}}g_{n}\right)^{\alpha}}>\frac{1}{2}\cdot\frac{L\left(\max_{n\in I_{j}}g_{n}\right)}{\left(\max_{n\in I_{j}}g_{n}\right)^{\alpha}},
\end{align*}
for $j$ sufficiently large. Hence, we have for the first factor of \eqref{reg var 1} that 
\begin{align}
\exp\left(-\epsilon\cdot\frac{\alpha}{1-\alpha}\cdot\frac{2^{j}L\left(\left(1+\epsilon\right)^{r_{j}}\right)}{\left(1+\epsilon\right)^{\alpha\cdot r_{j}}}\right) & <\exp\left(-\frac{\epsilon\cdot\alpha}{2\left(1-\alpha\right)}\cdot\frac{2^{j}\cdot L\left(\max_{n\in I_{j}}g_{n}\right)}{\max_{n\in I_{j}}g_{n}^{\alpha}}\right),\label{reg var 3}
\end{align}
for $r_{j}$ sufficiently large.
Inserting \eqref{reg var 2} and \eqref{reg var 3} in \eqref{reg var 1}
yields 
\begin{align*}
\MoveEqLeft\sum_{k=q_{j}}^{r_{j}}\mu\left( \max_{n\in I_{j}}\left|\mathsf{T}_{n}^{\rho_k}\chi-\int\mathsf{T}_{n}^{\rho_k}\chi\mathrm{d}\mu\right|\geq\epsilon\int\mathsf{T}_{2^{j+1}-1}^{\rho_k}\chi\mathrm{d}\mu\right)\\
 & <2K\exp\left(-\frac{\epsilon\cdot\alpha}{2\left(1-\alpha\right)}\cdot\frac{2^{j}\cdot L\left(\max_{n\in I_{j}}g_{n}\right)}{\max_{n\in I_{j}}g_{n}^{\alpha}}\right),
\end{align*}
for $j$ and thus $q_{j}$ sufficiently large.
Next we show that the above expression is summable in $j$.
\eqref{cond} implies that there exists $\psi\in\Psi$ such that
\begin{align*}
\frac{\epsilon\cdot \alpha}{2\left(1-\alpha\right)}\cdot n\cdot \frac{ L\left(g_n\right)}{g_n^{\alpha}}\geq\log\psi\left(\left\lfloor \log n\right\rfloor \right),
\end{align*}
for all $n\in\N$.
Applying Lemma \ref{log gamma log tilde gamma} with $a=\mathrm{e}$ and $b=2$ 
yields that this implies the existence of $\omega\in\Psi$ such that
\begin{align*}
\frac{\epsilon\cdot\alpha}{2\left(1-\alpha\right)}\cdot n\cdot \frac{ L\left(g_n\right)}{g_n^{\alpha}}
\geq\log\omega\left(\left\lfloor \log_{2} n\right\rfloor \right).
\end{align*}
This implies
\begin{align*}
 \exp\left(-\frac{\epsilon\cdot\alpha}{2\left(1-\alpha\right)}\cdot n\cdot \frac{ L\left(g_n\right)}{g_n^{\alpha}}\right)
 &\leq \frac{1}{\omega\left(\left\lfloor \log_{2} n\right\rfloor \right)}
\end{align*}
and thus,
\begin{align*}
 \exp\left(-\frac{\epsilon\cdot\alpha}{2\left(1-\alpha\right)}\cdot\frac{2^{j}\cdot L\left(\max_{n\in I_{j}}g_{n}\right)}{\max_{n\in I_{j}}g_{n}^{\alpha}}\right)
 &\leq \frac{1}{\omega\left(j\right)}.
\end{align*}
Since $\omega\in\Psi$,
this is equivalent to
\begin{align*}
\sum_{j=1}^{\infty}\exp\left(-\frac{\epsilon\cdot \alpha}{2\left(1-\alpha\right)}\cdot\frac{2^{j}\cdot L\left(\max_{n\in I_{j}}g_{n}\right)}{\max_{n\in I_{j}}g_{n}^{\alpha}}\right)<\infty.
\end{align*}
Thus,
\begin{align*}
\sum_{j=1}^{\infty}\sum_{k=q_{j}}^{r_{j}}\mu\left(\max_{n\in I_{j}}\left|\mathsf{T}_{n}^{\rho_{k}}\chi-\int\mathsf{T}_{n}^{\rho_{k}}\chi\mathrm{d}\mu\right|\geq\frac{\epsilon}{2}\int\mathsf{T}_{2^{j+1}-1}^{\rho_{k}}\chi\mathrm{d}\mu\right)<\infty,
\end{align*}
i.e.\ the statement of \ref{en: trunc B}.

\emph{Proof of \ref{en: trunc C}}.
We note that \eqref{E X*} implies that there exists $R\in\mathbb{\mathbb{\mathbb{\mathbb{R}}}}_{>0}$
such that 
\begin{align*}
\frac{r}{\int \upperleft{r}{}{\chi}\mathrm{d}\mu}\leq2\cdot\frac{1-\alpha}{\alpha}\cdot\frac{r^{\alpha}}{L\left(r\right)},
\end{align*}
for all $r\geq R$. With 
\begin{align*}
D\coloneqq\begin{cases}
           \sup_{r\in\left[\inf_{n\in\mathbb{N}}\overline{g}_{n},R\right]}\frac{r}{\int \upperleft{r}{}{\chi}\mathrm{d}\mu}&\text{if }\inf_{n\in\mathbb{N}}\overline{g}_{n}<R\\
           0&\text{else}
          \end{cases}
\end{align*}
we have that 
\begin{align*}
\frac{\bar{g}_{n}}{\int\upperleft{\bar{g}_{n}}{}{\chi}\mathrm{d}\mu}\leq\max\left\{ D,2\cdot\frac{1-\alpha}{\alpha}\cdot\frac{\bar{g}_{n}^{\alpha}}{L\left(\bar{g}_{n}\right)}\right\} 
\end{align*}
and since we have for $\psi\left(n\right)=n^{2}$ that 
\begin{align*}
\frac{\bar{g}_{n}^{\alpha}}{L\left(\bar{g}_{n}\right)}=o\left(\frac{n}{\log\psi\left(n\right)}\right),
\end{align*}
by Theorem \ref{Thm: Sn* allg} we find 
that the statement of \ref{en: trunc C} holds.
\end{proof}

\subsection{Proof of the large deviation result}\label{proofs transfer C}
In this section we will prove Lemma \ref{bernoulli}.
The proof is similar to the proof of \cite[Lemma 11]{kessebohmer_strong_2016}.
However, due to dependence some additional argumentations are required. 
In order to increase the readability, we will formulate this proof to be self contained.
\begin{proof}[Proof of Lemma \ref{bernoulli}]
Let
$I_n\coloneqq\left[2^n,2^{n+1}-1\right]$ and 
\begin{align*}
\kappa_n\coloneqq\left\lfloor\min_{k\in I_n}\log\psi\left(\left\lfloor \log k\right\rfloor\right)\right\rfloor,
\end{align*}
for $\psi\in\Psi$
and let $\nu:\mathbb{N}\to\mathbb{N}$ be defined as 
$\nu\left(n\right)\coloneqq\left\lfloor\log_2 n\right\rfloor$.
Further let 
\begin{align*}
B^{\ell}_i\coloneqq \begin{cases}
                    \mathbbm{1}_{\left\{\chi>\ell\right\}} &\text{ if }\mu\left(\chi>\ell\right)\geq\kappa_{\nu\left(i\right)}/2^{\nu\left(i\right)}\\
                    0&\text{ otherwise}
                   \end{cases}
\,\,\text{ and }\,\,
C^{\ell}_i\coloneqq \begin{cases}
                    \mathbbm{1}_{\left\{\chi>\ell\right\}} &\text{ if }\mu\left(\chi>\ell\right)\leq\kappa_{\nu\left(i\right)}/2^{\nu\left(i\right)}\\
                    0&\text{ otherwise}
                   \end{cases}
\end{align*}
and
\begin{align*}
\Delta_{n,l}&\coloneqq\left\{x\colon\mu\left(\chi\geq x\right)\in\left[\frac{l}{2^{n+1}},\frac{l+1}{2^{n+1}}\right]\right\}.
\end{align*}
Furthermore, define for $l,n\in\mathbb{N}$ fulfilling $\Delta_{n,l}\neq\emptyset$
the truncated observables 
\begin{align*}
f_{n,l}&\coloneqq\mathbbm{1}_{\left\{\chi\geq F^{\leftarrow}\left(1-l/2^{n+1}\right)\right\}}\,\,\,\text{ and }\,\,\,
g_{n,l}\coloneqq\mathbbm{1}_{\left\{\chi> F^{\leftarrow}\left(1-(l+1)/2^{n+1}\right)\right\}}.
\end{align*}

The definition of those observables ensures
\begin{align*}
 \frac{l}{2^{n+1}}\leq \int f_{n,l}\mathrm{d}\mu\leq \int g_{n,l}\mathrm{d}\mu\leq \frac{l+1}{2^{n+1}}.
\end{align*}

We will show \eqref{pn B} by separately showing the following:
\begin{enumerate}[label=(\Alph*)]
\item\label{en: large dev a} 
For all $i\in I_n$ and $V>0$ we have that
\begin{align*}
 \left\{p_i\cdot i-\mathsf{S}_i\mathbbm{1}_{\left\{\chi>u_i\right\}}> V\cdot c\left(p_i\cdot i, i\right)\right\}
 &\subset \left\{\max_{j\in I_{n}}\left|\mathsf{S}_j f_{n,l}-\int f_{n,l}\mathrm{d}\mu\cdot j\right|>\frac{V}{4}\cdot \min_{r\in I_{n}} c\left(l, r\right)\right\}\notag
\end{align*}
and 
\begin{align*}
\left\{\mathsf{S}_i\mathbbm{1}_{\left\{\chi>u_i\right\}}-p_i\cdot i> V\cdot c\left(p_i\cdot i, i\right)\right\}
 &\subset \left\{\max_{j\in I_{n}}\left|\mathsf{S}_{j}g_{n,l}-\int g_{n,l}\mathrm{d}\mu\cdot j\right|>\frac{V}{4}\cdot  \min_{r\in I_{n}}c\left(l, r\right)\right\}.
\end{align*}

\item\label{en: large dev b} 
There exists $V>0$ such that
\begin{align*}
 \sum_{n=k}^{\infty}\sum_{l=\kappa_n}^{2^{n+1}}\mu\left(\left|\max_{j\in I_n}\mathsf{S}_j f_{n,l}-\int f_{n,l}\mathrm{d}\mu\cdot j\right|>\frac{V}{4}\cdot\min_{r\in I_n} c\left(l, r\right)\right)<\infty
\end{align*}
and 
\begin{align}
 \sum_{n=k}^{\infty}\sum_{l=\kappa_n}^{2^{n+1}}\mu\left(\left|\max_{j\in I_n}\mathsf{S}_j g_{n,l}-\int g_{n,l}\mathrm{d}\mu\cdot j\right|>\frac{V}{4}\cdot\min_{r\in I_n} c\left(l, r\right)\right)<\infty.\label{l c tilde}
\end{align}

\item\label{en: large dev c}
\begin{align*}
\mu\left( \left|\mu\left(\chi>u_i\right)\cdot i-\mathsf{S}_iC^{u_i}_{i}\right|\geq V\cdot c\left(p_{i}\cdot i,i\right)\text{ i.o.} \right)=0.
\end{align*}
\end{enumerate}
Applying the Borel-Cantelli lemma on \ref{en: large dev b} and combining this with \ref{en: large dev a} yields 
\begin{align*}
\mu\left( \left|\mu\left(\chi>u_i\right)\cdot i-\mathsf{S}_iB^{u_i}_{i}\right|\geq V\cdot c\left(p_{i}\cdot i,i\right)\text{ i.o.} \right)=0.
\end{align*}
This together with \ref{en: large dev c} yields the statement of the lemma. 

\emph{Proof of \ref{en: large dev a}}.
Let us assume $i\in I_n$ and
\begin{align*}
\frac{l}{2^{n+1}}\leq p_i\leq\frac{l+1}{2^{n+1}}.
\end{align*}
This implies by the construction of $f_{n,l}$ and $g_{n,l}$ that
\begin{align}
\frac{l}{2^{n+1}}\leq \int f_{n,l}\mathrm{d}\mu \leq p_i\leq \int g_{n,l}\mathrm{d}\mu\leq \frac{l+1}{2^{n+1}}.\label{l pj}
\end{align}
We can conclude from \eqref{l pj} that 
$ \left(p_{i}-\int f_{n,l}\mathrm{d}\mu\right)\cdot i\leq{i}/{2^{n+1}}\leq 1$
and furthermore $l\leq p_i\cdot 2^{n+1}\leq 2\cdot p_i\cdot i$.
Thus,
\begin{align}
 \left\{p_i\cdot i-\mathsf{S}_i\mathbbm{1}_{\left\{\chi>u_i\right\}}> V\cdot c\left(p_i\cdot i, i\right)\right\}
 &\subset \left\{p_i\cdot i-\mathsf{S}_i f_{n,l}> V\cdot c\left(p_i\cdot i, i\right)\right\}\notag\\
 &\subset \left\{\int f_{n,l}\mathrm{d}\mu\cdot i-\mathsf{S}_i f_{n,l}>\frac{V}{2}\cdot  c\left(l, i\right)-1\right\}\notag\\
 &\subset \left\{\max_{j\in I_{n}}\left|\int f_{n,l}\mathrm{d}\mu\cdot i-\mathsf{S}_i f_{n,l}\right|>\frac{V}{4}\cdot \min_{r\in I_{n}} c\left(l, r\right)\right\},\label{eq: subset lar dev 1}
\end{align}
if $V\geq 2$.
On the other hand by \eqref{l pj} we can conclude that 
$\left(\int g_{n,l}\mathrm{d}\mu-p_{i}\right)\cdot i\leq i/2^{n+1}\leq 1$
and $l\leq p_i\cdot 2^{n+1}\leq 2\cdot p_i\cdot i$.
Hence, 
\begin{align}
 \left\{\mathsf{S}_i\mathbbm{1}_{\left\{\chi>u_i\right\}}-p_i\cdot i> V\cdot c\left(p_i\cdot i, i\right)\right\}
 &\subset \left\{\mathsf{S}_ig_{n,l}-p_i\cdot i> V\cdot c\left(p_i\cdot i, i\right)\right\}\notag\\
 &\subset \left\{\mathsf{S}_ig_{n,l}-\int g_{n,l}\mathrm{d}\mu\cdot i>\frac{V}{2}\cdot  c\left(l, i\right)-1\right\}\notag\\
 &\subset \left\{\max_{j\in I_{n}}\left|\mathsf{S}_ig_{n,l}-\int g_{n,l}\mathrm{d}\mu\cdot j\right|>\frac{V}{4}\cdot  \min_{r\in I_{n}}c\left(l, r\right)\right\},\label{eq: subset lar dev 2}
\end{align}
if $V\geq 2$.
Combining 
\eqref{eq: subset lar dev 1}
and \eqref{eq: subset lar dev 2} gives the statement of \ref{en: large dev a}.

\emph{Proof of \ref{en: large dev b}}.
To ease notation we define
\begin{align*}
\overline{f}_{n,l}&\coloneqq f_{n,l}-\int f_{n,l}\mathrm{d}\mu
\,\,\,\text{ and }\,\,\,
\overline{g}_{n,l}\coloneqq g_{n,l}-\int g_{n,l}\mathrm{d}\mu.
\end{align*}
We aim to apply Lemma \ref{lemma: Tnfn chi deviation allg} and note that by \eqref{l pj}
\begin{align}
 \frac{V}{4}\cdot \frac{\min_{r\in I_n} c\left(l, r\right)}{\left(2^{n+1}-1\right)\cdot \left|\overline{f}_{n,l}\right|_1}
 \geq \frac{V}{4}\cdot \frac{\left(l+1\right)^{1/2+\epsilon}\cdot \kappa_n^{1/2-\epsilon}}{l}
 \geq \min\left\{\frac{V}{8}, 1\right\}\cdot \left(\frac{\kappa_n}{l}\right)^{1/2-\epsilon}
 \leq  1,\label{eq: V/2 min}
\end{align}
if $V>1$.
Using Lemma \ref{lemma: Tnfn chi deviation allg} this implies 
\begin{align}
\MoveEqLeft\mu\left(\left|\max_{j\in I_n}\mathsf{S}_j\overline{f}_{n,l}\right|>\frac{V}{4}\cdot \min_{r\in I_n} c\left(l, r\right)\right)\notag\\
&\leq K\exp\left(-U\cdot \frac{V \min_{r\in I_n} c\left(l, r\right)}{2\cdot l\cdot \left\|\overline{f}_{n,l}\right\|}\cdot \min\left\{\frac{V}{8}, 1\right\}\cdot \left(\frac{\kappa_n}{l}\right)^{1/2+\epsilon}\right)\notag\\
&\leq K\exp\left(-\frac{U\cdot V}{2}\cdot\min\left\{\frac{V}{8}, 1\right\}\cdot  \frac{l^{2\epsilon}\cdot \kappa_n^{1-2\epsilon}}{\left\|\overline{f}_{n,l}\right\|}\right),\label{max Enk}
\end{align}
for $n$ sufficiently large.
Furthermore, we have by \eqref{C 2}
\begin{align*}
 \left\|\overline{f}_{n,l}\right\|\leq \left\|f_{n,l}\right\|+\left\|\int f_{n,l}\mathrm{d}\mu\right\|=\left\|f_{n,l}\right\|+\int f_{n,l}\mathrm{d}\mu\left\|\mathbbm{1}\right\|
 \leq K_2+ \left\|\mathbbm{1}\right\|.
\end{align*}
Hence,
\begin{align}
\mu\left(\left|\max_{j\in I_n}\mathsf{S}_j\overline{f}_{n,l}\right|>\frac{V}{4}\cdot\min_{r\in I_n} c\left(l, r\right)\right)
&\leq K\exp\left(-\frac{U\cdot V}{2\cdot \left(K_2+ \left\|\mathbbm{1}\right\|\right)}\cdot\min\left\{\frac{V}{8}, 1\right\}\cdot l^{2\epsilon}\cdot \kappa_n^{1-2\epsilon}\right).\label{eq: sum Enlk}
\end{align}
If we set 
\begin{align*}
 V=\max\left\{\frac{2\cdot \left(K_2+ \left\|\mathbbm{1}\right\|\right)}{U}, 8\right\},
\end{align*}
then 
\begin{align*}
\frac{U\cdot V}{2\cdot \left(K_2+ \left\|\mathbbm{1}\right\|\right)}\cdot\min\left\{\frac{V}{8}, 1\right\}\geq 1.
\end{align*}
We estimate
\begin{align}
 \sum_{n=k}^{\infty}\sum_{l=\kappa_n}^{2^{n+1}}\mu\left(\left|\max_{j\in I_n}\mathsf{S}_j\overline{f}_{n,l}\right|>\frac{V}{4}\cdot\min_{r\in I_n} c\left(l, r\right)\right)
 &\leq \sum_{n=k}^{\infty}\sum_{l=\kappa_n}^{2^{n+1}} 2\exp\left(-\max\left\{l,\kappa_n\right\}^{2\epsilon}\cdot\kappa_n^{1-2\epsilon}\right).\label{eq: sum in Phink}
\end{align}
This estimation holds for $k$ sufficiently large.
Furthermore,
\begin{align}
\sum_{l=\kappa_n}^{2^{n+1}}\exp\left(- \kappa_n^{1-2\epsilon}\cdot l^{2\epsilon}\right)
&= \exp\left(-\kappa_n\right)\sum_{l=\kappa_n}^{2^{n+1}}\exp\left(-\kappa_n^{1-2\epsilon}\left(l^{2\epsilon}-\kappa_n^{2\epsilon}\right)\right)\notag\\
&< \exp\left(-\kappa_n\right)\sum_{l=0}^{\infty}\exp\left(-\kappa_n^{1-2\epsilon}l^{2\epsilon}\right)\notag\\
&\leq \exp\left(-\kappa_n\right)\sum_{l=0}^{\infty}\exp\left(-l^{2\epsilon}\right)\eqqcolon \exp\left(-\kappa_n\right)\cdot W,\label{m qn 2^n}
\end{align}
for $n$ and thus $\kappa_n$ sufficiently large.
Further, by construction
\begin{align}
\exp\left(-\kappa_n\right)
&=\exp\left(-\left\lfloor\min_{j\in I_n}\log\psi\left(\left\lfloor\log j\right\rfloor\right)\right\rfloor\right)
\leq \exp\left(-\min_{j\in I_n}\log\psi\left(\left\lfloor\log j\right\rfloor\right)+1\right)\notag\\
&=\frac{\mathrm{e}}{\min_{j\in I_n}\psi\left(\left\lfloor\log j\right\rfloor\right)}.\label{eq: exp(-kappan)}
\end{align}
Furthermore, we can conclude from Lemma \ref{log gamma log tilde gamma} that there exists $\omega\in\Psi$ such that 
\begin{align}
\min_{j\in I_n}\psi\left(\left\lfloor\log j\right\rfloor\right)&\geq\min_{j\in I_n}\omega\left(\left\lfloor\log_2 j\right\rfloor\right)=\omega\left(n\right).\label{eq: exp(-kappan)1}
\end{align}
Hence, combining this with \eqref{eq: sum in Phink}, \eqref{m qn 2^n}, and \eqref{eq: exp(-kappan)} yields
\begin{align*}
 \sum_{n=k}^{\infty}\sum_{l=\kappa_n}^{2^{n+1}}\mu\left(\left|\max_{j\in I_n}\mathsf{S}_j\overline{f}_{n,l}\right|>\frac{V}{4}\cdot\min_{r\in I_n} c\left(l, r\right)\right)<\infty.
\end{align*}
\eqref{l c tilde} follows from analogous calculations.

\emph{Proof of \ref{en: large dev c}}.
For $n\in\mathbb{N}$ define
the random variables
\begin{align*}
h_n&\coloneqq\mathbbm{1}_{\left\{\chi> F^{\leftarrow}\left(1-\kappa_n/2^n\right)\right\}}
\end{align*}
which imply $\int h_n\mathrm{d}\mu \leq \kappa_n/2^n$. 
To ease notation set 
$\overline{h}_{n}\coloneqq h_n-\int h_n\mathrm{d}\mu$. 
We have for $i\in I_n$ and $p_{i}\leq\kappa_{n}/2^{n}$ that $\int h_{n}\mathrm{d}\mu\geq p_i$ and thus
\begin{align*}
 \mathsf{S}_i\mathbbm{1}_{\left\{\chi>u_i\right\}}-p_i\cdot i>V\cdot c\left(p_i\cdot i, i\right)
\end{align*}
implies 
\begin{align*}
 \mathsf{S}_ih_n-p_i\cdot i> V\cdot c\left(\kappa_{n}, i\right)
\end{align*}
which yields
\begin{align*}
 \mathsf{S}_ih_n-\int h_n\mathrm{d}\mu\cdot i
 &> V\cdot c\left(\kappa_{n}, i\right)-\left(\int h_n\mathrm{d}\mu-p_i\right)\cdot i
 >V\cdot c\left(\kappa_{n}, i\right)-\kappa_n\cdot 2\\
 &\geq \frac{V}{2}\cdot c\left(\kappa_{n}, i\right)
 \geq \frac{V}{2}\cdot\min_{r\in I_n} c\left(\kappa_{n}, r\right),
\end{align*}
if $V\geq 4$.
On the other
hand we have for $i\in I_n$ and $p_i\leq \kappa_{n}/2^{n}$ 
that 
\begin{align*}
 \left\{p_i\cdot i-\mathsf{S}_i\mathbbm{1}_{\left\{\chi>u_i\right\}}>V\cdot c\left(p_i\cdot i, i\right)\right\}
\subset \left\{p_i\cdot i>V\cdot p_i\cdot i\right\}=\emptyset,
\end{align*}
if $V\geq 1$.
Combining this with the above considerations yields 
\begin{align*}
 \left\{\left|\mathsf{S}_i\mathbbm{1}_{\left\{\chi>u_i\right\}}-p_i\cdot i\right|>V\cdot c\left(p_i\cdot i, i\right)\right\}
 \subset 
 \left\{\max_{j\in I_n}\left|\mathsf{S}_jh_n-\int h_n\mathrm{d}\mu\cdot j\right|
 > \frac{V}{2}\cdot \min_{r\in I_n}c\left(\kappa_{n}, r\right)\right\}.
\end{align*}

Likewise as in the calculations leading to \eqref{eq: sum Enlk} replacing $l$ by $\kappa_n$ in \eqref{eq: V/2 min} and \eqref{max Enk}, it follows for sufficiently large $n$ that 
\begin{align*}
\MoveEqLeft\mu\left(\max_{j\in I_n}\left|\mathsf{S}_j\overline{h}_{n}\right|>\frac{V}{2}\cdot \min_{r\in I_n}c\left(\kappa_n, r\right)\right)<K\cdot\exp\left(-\kappa_n\right).
\end{align*}
Applying \eqref{eq: exp(-kappan)} and \eqref{eq: exp(-kappan)1}
yields 
\begin{align*}
\MoveEqLeft\sum_{n=1}^{\infty}\mu\left(\max_{i\in I_n}\left|\mathsf{S}_j\overline{h}_{n}\right|>\frac{V}{2}\cdot \min_{r\in I_n}c\left(\kappa_n, r\right)\right)<\infty.
\end{align*}
The Borel-Cantelli lemma implies the statement or \ref{en: large dev c}.
\end{proof}

\subsection{Proofs of theorems concerning the trimmed sum \texorpdfstring{$\mathsf{S}_n^{b_n}\chi$}{}}
In this section we will give the proof of Theorem \ref{find bn}, its Corollary \ref{S b(n) immer} and Theorem \ref{Sb(n)}.
The main idea of all proofs is to use statements which state that under the given assumptions Properties $\bm{A}$ and $\bm{B}$ are fulfilled 
and consequently Lemma \ref{lem: Prop A B to trimming} can be applied.

\begin{proof}[Proof of Theorem \ref{find bn}]
Our strategy is to show that all properties are fulfilled such that we can apply Lemma \ref{lem: Prop A B to trimming}.

If \eqref{cond 1 find bn} holds, then also \eqref{cond a} for the choice $\psi(n)=n^2$ which fulfills $\psi\in\Psi$.
Thus, Theorem \ref{Thm: Sn* allg} implies that Condition $\bm{A}$ holds.

Furthermore, the definition of $b_n$, the 
above choice of $\psi$ combined with Lemma
\ref{bernoulli}, and setting $W=2V$ yield 
that Condition $\bm{B}$ holds.

Finally, by the definition of $b_n$ we have that $\gamma_n<W\cdot \max\{a_n^{1/2+\epsilon}\cdot\left(\log\log n\right)^{1/2-\epsilon},\log\log n\}+1$.
Hence, \eqref{cond 1 find bn} implies that \eqref{eq: fn gamman} holds.
\end{proof}

\begin{proof}[Proof of Corollary \ref{S b(n) immer}]
The proof of Theorem \ref{S b(n) immer} is basically the same as the proof of \cite[Theorem A]{kessebohmer_strong_2016} applying Theorem \ref{find bn} instead of \cite[Theorem B]{kessebohmer_strong_2016}.
\end{proof}

\begin{proof}[Proof of  Theorem \ref{Sb(n)}]
In the first part of the proof we will show \eqref{eq: lim Snbnchi}.
We define 
\begin{align*}
f_{n}\coloneqq F^{\leftarrow}\left(1-\frac{b_{n}-W\cdot c\left(b_{n},n\right)}{n}\right)-1
\end{align*}
with $c$ as in \eqref{c(n)}.
We have that $F(F^{\leftarrow}(x)-1)\leq x$.
Since $c$ is monotonically increasing
in its first variable, it follows that 
\begin{align*}
 \gamma_n&\coloneqq b_n-n\cdot \mu\left(\chi> f_n\right)
 =b_n-n\cdot \left(1-F\left(f_n\right)\right)\\
 &=b_n-n\cdot \left(1-F\left(F^{\leftarrow}\left(1-\frac{b_{n}-W\cdot c\left(b_{n},n\right)}{n}\right)-1\right)\right)\\
 &\geq b_n-n\cdot \frac{b_{n}-W\cdot c\left(b_{n},n\right)}{n}
 = W\cdot c\left(b_{n},n\right)\geq W\cdot c\left(n\cdot \mu\left(\chi> f_n\right),n\right).
\end{align*}
Hence, Lemma \ref{bernoulli} implies 
that the pair $((f_n),(\gamma_n))$ fulfills Property $\bm{B}$.

In the following we will show that this choice of $(f_n)$ fulfills Property $\bm{A}$.
Assume that $\left(b_{n}\right)$ fulfills \eqref{eq: t cond a2} for some $\psi\in\Psi$.
If we consider 
$c_{\epsilon,\psi}\left(k,n\right)$ 
in \eqref{c(n)} with the same $\psi$, then $c\left(b_{n},n\right)=b_{n}^{1/2+\epsilon}\log\psi\left(\left\lfloor \log n\right\rfloor \right)^{1/2-\epsilon}$
and thus 
$W\cdot c\left(b_{n},n\right)=o\left(b_{n}\right)$.

In the next steps we aim to prove that $(1-F\left(f_n\right))\cdot n\sim b_{n}$.
We have on the one hand that $F(F^{\leftarrow}(x))\geq x$ 
and on the other hand $F(F^{\leftarrow}(x)-1)\leq x$.
This yields
\begin{align}
 b_n&\sim b_n-W\cdot c\left(b_{n},n\right)
 \geq n\cdot \left(1-F\left(F^{\leftarrow}\left(1-\frac{b_{n}-W\cdot c\left(b_{n},n\right)}{n}\right)\right)\right)
 = n\cdot \left(1-F\left(f_n+1\right)\right)\notag\\
 &=n\cdot \frac{L\left(f_n+1\right)}{\left(f_n+1\right)^{\alpha}}
 \sim n\cdot \frac{L\left(f_n\right)}{f_n^{\alpha}}
 =n\cdot \left(1-F\left(f_n\right)\right)\notag\\
 &= n\cdot \left(1-F\left(F^{\leftarrow}\left(1-\frac{b_{n}-W\cdot c\left(b_{n},n\right)}{n}-1\right)\right)\right)
 \geq b_n.\label{eq: bn asymp}
\end{align}
The second asymptotic holds because by assumption $b_n=o(n)$ which implies that $(f_n)$ tends to infinity.

The above observation combined with \eqref{eq: t cond a2} implies
\begin{align*}
 \lim_{n\to\infty}\frac{n\cdot L\left(f_n\right)}{f_n^{\alpha}\cdot \log\psi\left(\left\lfloor\log n\right\rfloor\right)}=\infty
\end{align*}
which is equivalent to \eqref{cond} and Theorem \ref{Thm: Sn* reg var} states that under this condition Property $\bm{A}$ holds.

In the last steps we will prove \eqref{eq: fn gamman}.
By \eqref{E S*} we have that
\begin{align*}
 \frac{\gamma_n\cdot f_n}{\int\mathsf{T}_n^{f_n}\chi\mathrm{d}\mu}
 \sim \frac{\gamma_n\cdot f_n}{n\cdot \frac{\alpha}{1-\alpha}\cdot f_n^{1-\alpha}\cdot L\left(f_n\right)}
 =\frac{\gamma_n\cdot f_n^{\alpha}}{n\cdot \frac{\alpha}{1-\alpha}\cdot L\left(f_n\right)}
 =\frac{\gamma_n}{n\cdot\left(1-F\left(f_n\right)\right)}.
\end{align*}

Since $b_n\sim n\cdot (1-F(f_n))$, it follows that $\gamma_n=o\left(n\cdot \left(1-F\left(f_n\right)\right)\right)$.
This implies \eqref{eq: fn gamman}.
Hence, we can apply Lemma \ref{lem: Prop A B to trimming} and obtain the first part of the theorem. 
\medskip

In the next steps we will show the asymptotic given in \eqref{bn psi exp}
by finding an asymptotic equivalent sequence in
terms of $\left(b_{n}\right)$ for $\int\mathsf{T}_{n}^{f_{n}}\chi\mathrm{d}\mu$.
We have by \eqref{E S*} and the definition
of $F$ and $(f_{n})$ that 
\begin{align}
\int\mathsf{T}_{n}^{f_{n}}\chi\mathrm{d}\mu & \sim\frac{\alpha}{1-\alpha}\cdot n\cdot f_{n}^{1-\alpha}\cdot L\left(f_{n}\right)
  =\frac{\alpha}{1-\alpha}\cdot n\cdot \left(1-F\left(f_{n}\right)\right)\cdot f_{n}\notag\\
 & =\frac{\alpha}{1-\alpha}\cdot n\cdot \left(1-F\left(F^{\leftarrow}\left(1-\frac{b_{n}-W\cdot c\left(b_{n},n\right)}{n}\right)\right)\right)\notag\\
 &\qquad\cdot F^{\leftarrow}\left(1-\frac{b_{n}-W\cdot c\left(b_{n},n\right)}{n}\right).\label{eq: an G}
\end{align}
Set for the following $u_n\coloneqq n/\left(b_{n}-W\cdot c\left(b_{n},n\right)\right)$
and the function $G:\mathbb{R}^{+}\rightarrow\mathbb{R}^{+}$ with 
$G\left(x\right)\coloneqq1/\left(1-F\left(x\right)\right)=x^{\alpha}/L\left(x\right)$.
Then 
\begin{align*}
 F^{\leftarrow}\left(1-1/u_n\right)
 &=\inf\left\{ y\in\left[0,\infty\right)\colon F\left(y\right)>1-\frac{1}{u_n}\right\}\\
 &=\inf\left\{ y\in\left[0,\infty\right)\colon1-\frac{1}{G\left(y\right)}>1-\frac{1}{u_n}\right\}\\
 &=\inf\left\{ y\in\left[0,\infty\right)\colon G\left(y\right)>u_n\right\}
 =G^{\leftarrow}\left(u_n\right).
\end{align*}
Since $u_{n}$ tends to infinity, we can apply Lemma \ref{bingham} on $G$
and obtain 
\begin{align*}
F^{\leftarrow}\left(1-1/u_n\right)\sim u_{n}^{1/\alpha}\cdot\left(L^{1/\alpha}\right)^{\#}\left(u_{n}^{1/\alpha}\right).
\end{align*}

This and the fact that $b_{n}\sim b_{n}-W\cdot c\left(b_{n},n\right)$
yields
\begin{align}
\MoveEqLeft L^{\leftarrow}\left(1-\frac{b_{n}-W\cdot c\left(b_{n},n\right)}{n}\right)\notag\\
 & \sim\left(\frac{n}{b_{n}-W\cdot c\left(b_{n},n\right)}\right)^{1/\alpha}\cdot \left(L^{1/\alpha}\right)^{\#}\left(\left(\frac{n}{b_{n}-W\cdot c\left(b_{n},n\right)}\right)^{1/\alpha}\right)\notag\\
 & \sim\left(\frac{n}{b_{n}}\right)^{1/\alpha}\cdot\left(L^{1/\alpha}\right)^{\#}\left(\left(\frac{n}{b_{n}}\right)^{1/\alpha}\right).\label{G 2}
\end{align}
Hence, applying \eqref{eq: bn asymp} and \eqref{G 2} to \eqref{eq: an G} yields \eqref{bn psi exp}.
\end{proof}

\subsection{Proof of main example}\label{subsec: proof ex}
\begin{proof}[Proof of Propositions \ref{prop: Ex1} and \ref{prop: Ex2}]
We have for all $f,g\in BV$ that 
\begin{align*}
\left\|f\cdot g\right\|_{BV}&=\left|f\cdot g\right|_{\infty}+\mathsf{V}\left(f\cdot g\right)
\leq \left|f\right|_{\infty}\cdot \left|g\right|_{\infty}+\left|f\right|_{\infty}\cdot \mathsf{V}\left(g\right)+\left|g\right|_{\infty}\cdot\mathsf{V}\left(f\right)
\leq \left\|f\right\|_{BV}\cdot\left\|g\right\|_{BV}
\end{align*}
and thus, it can be deduced that $BV$ is a Banach algebra which contains the constant functions.

Further define $h:\Omega\to\mathbb{R}_{\geq 0}$ as a function fulfilling $\left|h\right|_{\infty}\leq 1$, $\mathsf{V}\left(h\right)<\infty$, and $h\lvert_{\Omega\backslash \Omega'}=0$. 
With this function we define an operator $\mathcal{P}:\mathcal{L}^1\to \mathcal{L}^1$ by 
\begin{align*}
 \mathcal{P}f\left(x\right)\coloneqq \sum_{y\in T^{-1}\left(x\right)}h\left(y\right)f\left(y\right).
\end{align*}
Furthermore, let $h$ be such that the adjoint operator $\mathcal{P}^*$ preserves the Lebesgue measure $\lambda$, i.e.
$\mathcal{P}^*\left(\lambda\right)\left(f\right)=\lambda \left(\mathcal{P} f\right)=\lambda \left( f\right)$ for all $f\in\mathcal{L}^1$.

Now, given all the properties of $T$, \cite[Theorem 1]{rychlik_bounded_1983} states that the operator $\mathcal{P}$
fulfills all the properties for a spectral gap given in Definition \ref{def spec gap} except that there might be finitely many mutually orthogonal one dimensional projections.
Combining this with \cite[Remark 4b]{rychlik_bounded_1983} gives that in case the system is topologically mixing there is only one projection corresponding to the eigenvalue $1$.
Furthermore, \cite[Theorem 3]{rychlik_bounded_1983} implies in the topologically mixing case the existence of a function $f$ such that $\mathcal{P}f=f$ 
and thus the existence of a measure $\mu$ absolutely continuous with respect to $\lambda$.

Finally, \eqref{cond C 1} and \eqref{cond C 2} ensure that \eqref{C 1} and \eqref{C 2} are fulfilled
with $K_1\coloneqq 1+\widetilde{K}_1$ and $K_2\coloneqq 1+\widetilde{K}_2$.
\end{proof}

\subsection*{Acknowledgements}
This research was supported by the German Research Foundation (DFG) grant {\emph Renewal Theory and Statistics of Rare Events in Infinite Ergodic Theory} (Geschäftszeichen KE 1440/2-1).
TS was partly supported by the Studienstiftung des Deutschen Volkes.

We thank Gerhard Keller for referring us to the paper of Rychlik.

\subsection*{Acknowledgements}
We thank Gerhard Keller for referring us to the paper of Rychlik.

\end{document}